\newtheorem{theorem}{Theorem}[]
\newtheorem{lemma}{Lemma}
\newtheorem{proposition}{Proposition}
\newtheorem{assumption}{Assumption}
\newtheorem{remark}{Remark}
\theoremstyle{definition}   
\newtheorem{example}{Example}
\newtheorem{definition}{Definition}
\let\P\relax
\newcommand{\P}{\mathbb{P}}
\newcommand{\E}{\mathbb{E}}
\newcommand{\R}{\mathbb{R}}
\newcommand{\Var}{\operatorname{Var}}
\newcommand{\Cov}{\operatorname{Cov}}
\newcommand{\PIF}{\operatorname{PIF}}
\newcommand{\IF}{\operatorname{IF}}
\newcommand{\loss}{\rho}
\newcommand{\score}{\psi}
\newcommand{\closs}{\bar{\loss}}
\newcommand{\mposterior}{\pi_n^{\loss}}
\newcommand{\centeredmposterior}{\pi_n^{\closs}}
\newcommand{\corruptedsample}{X^{(n,m)}}
\newcommand{\wempirical}{F_n^{\boldsymbol{\alpha}}}
\newcommand{\mestimator}{\hat{\theta}^{\boldsymbol{\alpha}}_{\loss }}
\newcommand{\mathbbm}[1]{\text{\usefont{U}{bbm}{m}{n}#1}} 
\titleformat{\section}[display]{\normalfont\huge\bfseries\centering}{\centering\chaptertitlename\thechapter}{10pt}{\Large}
\titlespacing*{\section}{0pt}{0ex}{0ex}
\titleformat{\section}[block]
  {\normalfont\centering}   
  {\thesection.}                           
  {1ex}                                    
  {\MakeUppercase}                         
  []                                       
\titlespacing*{\section}{0pt}{1em}{0em}    
\titleformat{\subsection}[block]            
  {\normalfont\bfseries}                    
  {\thesubsection.}                         
  {0.5em}                                   
  {}                                        
\titlespacing*{\subsection}{0pt}{1em}{0em}
\begin{document}
\title{A theoretical framework for M-posteriors: frequentist guarantees and robustness properties} 
\author[J.\ Marusic]{Juraj Marusic}
\address{%
  Department of Statistics\\
  Columbia University\\
  New York, NY 10027, USA
}
\email{juraj.marusic@columbia.edu}
\author[M. Avella Medina]{Marco Avella Medina}
\address{%
  Department of Statistics\\
  Columbia University\\
  New York, NY 10027, USA
}
\email{marco.avella@columbia.edu}
\author[C. Rush]{Cynthia Rush}
\address{%
  Department of Statistics\\
  Columbia University\\
  New York, NY 10027, USA
}
\email{cynthia.rush@columbia.edu}

\thanks{This research was partially
supported by  NSF grants DMS-2310973 (Avella Medina) and DMS-2413828 (Rush). The authors are grateful to Heather Battey and Elvezio Ronchetti for insightful feedback on a preliminary version of this paper. }   
 
\date{\today}

\begin{abstract}
We provide a theoretical framework for a wide class of generalized posteriors that can be viewed as the natural Bayesian posterior counterpart of the class of M-estimators in the frequentist world. We call the members of this class M-posteriors and show that they are asymptotically normally distributed under mild conditions on the M-estimation loss and the prior. In particular, an M-posterior contracts in probability around a normal distribution centered at an M-estimator, showing frequentist consistency and suggesting some degree of robustness depending on the reference M-estimator. We formalize the robustness properties of the M-posteriors by a new characterization of the posterior influence function and a novel definition of  breakdown point adapted for posterior distributions. We illustrate the wide applicability of our theory in various popular models and illustrate their empirical relevance in some numerical examples.

\end{abstract} 

\maketitle

\section{Introduction}
Modern Bayesian methods provide a rich set of data analysis tools that are very popular in statistics \citep{gelmanetal2013} and machine learning \citep{bishop2006, murphy::mlbook} across many disciplines, such as natural language processing \citep{blei2003}, genomics \citep{larget1999}, and epidemiology \citep{best2005}.
However, in the presence of outliers or under model misspecification, classical Bayes estimators constructed using the conventional posterior distribution may be fragile. To date, there have been several approaches studied in the literature for creating more robust Bayesian procedures. 
The classical Bayesian way of handling data suspected to be contaminated with outliers either constructs posteriors using heavy-tailed models, employs mixture models where the contamination appears explicitly as a mixture component, or uses priors that penalize large parameter values \citep{berger1994,andrade:ohagan2006}. Despite these attempts however, according to Huber  \cite[Chapter 15]{huber:ronchetti2009},  a   robustness theory for Bayesian statistics has remained elusive, or at least philosophically distant from the  foundational principles of robust statistics.

More recent approaches for managing outliers have tried to reconcile the Bayesian paradigm with some traditional robust statistics concepts that are rooted in the frequentist paradigm \citep{huber1981,huber:ronchetti2009,hampeletal1986,maronnaetal2019}. These efforts have led to defining notions of qualitative robustness, influence functions and breakdown points for some non-standard Bayesian methods including the disparity-based posteriors of \citep{hooker:vidyashankar2014, ghosh:basu2016,matsubara:knoblauch:briol:oates2022}, the coarsened posterior of \cite{miller:dunson2019} and Gaussian processes methods \citep{altamirano:briol:knoblauch2023}. All these papers emphasize the role of carefully designed robust losses over building  more complex models such as mixtures or choosing carefully constructed priors.
Our work follows this path and systematically connects robustness properties of a class of generalized posterior distributions to the standard M-estimation theory in (frequentist) robust statistics. While in the Bayesian paradigm the parameters are viewed as random and one seeks to quantify uncertainty about them using the data, in frequentist statistics, these parameters are considered fixed and the goal is to estimate them with point estimates. With this correspondence in mind, the posterior distributions that we study can be viewed as the natural Bayesian counterparts to M-estimators in the frequentist world; hence, we call them M-posteriors. In more detail, M-posteriors are obtained by combining a prior distribution on the parameters with a Gibbs measure that is constructed using an empirical loss function that defines an M-estimator.

We study the robustness properties of Bayesian M-posteriors in a number of ways. First, we establish a frequentist  asymptotic theory describing the contraction of the M-posterior distribution by means of a Bernstein-von Mises (BvM) theorem. This result mirrors the standard asymptotic normality theory for M-estimators. Furthermore, we obtain general robustness assessments of M-posteriors by virtue of a new, but natural, characterization of \emph{the posterior influence function} and a novel conception of \emph{posterior breakdown point}. 
In more detail, our posterior influence function measures how much the M-posterior distribution  changes under infinitesimal contamination of the data while the posterior breakdown point measures how many arbitrarily bad observations are needed before the M-posterior gives arbitrarily bad results. We introduce these ideas more formally in Section~\ref{sec:robust_statistics}.
As we will show, it turns out that all of the asymptotic and robustness properties of the Bayesian M-posteriors we consider in this work are connected to a fundamental quantity that is also of interest for M-estimators: the score function. Beyond just the score, our analysis also highlights the role played by the choice of the prior in the properties and behavior  of M-posteriors.

While robust statistics is rooted in frequentist ideas,  the tools we introduce in this work are completely model and paradigm agnostic:  they do not assume an underlying Bayesian or frequentist data generating process. Indeed,  both of our notions of robustness, namely, the posterior influence function and the posterior breakdown point, are characterized mathematically as  functionals of the empirical distribution induced by the observed data. We  believe this makes them
natural approaches for quantifying the robustness of the posterior distribution  to outliers.

Our asymptotic theory builds on a long tradition of BvM results \citep{lecam1953,freedman1963,Vaart_1998}; in particular, on recent work by \cite{chernozhukov:hong2003,kleijn:vandervaart2012,wang:blei2019-JASA, miller2021,avellamedinaetal2021}. Various forms of generalized posteriors have appeared over the years \citep{zhang1999,chernozhukov:hong2003,bissiri:holmes:walker2016}, including some interesting work on Bayesian quantile regression \citep{yu:moyeed2001,yang:wang:he2016}, and with some increased  interest in recent years on power/fractional/tempered posteriors \citep{grunwald2012, grunwald:ommen2017, holmes:walker2017,higginsetal2017,  miller:dunson2019, avellamedinaetal2021,ruchira,mclatchie:fong:frazier:knoblauch2025} and divergence-based posteriors motivated by their robustness to outliers \citep{hooker:vidyashankar2014,ghosh:basu2016,nakagawa:hashimoto2020,matsubara:knoblauch:briol:oates2022, altamirano:briol:knoblauch2023}. 

Our work was inspired  by some core asymptotic ideas and initial robustness assessments for Bayesian methods existing in the literature \cite{chernozhukov:hong2003, kleijn:vandervaart2012,hooker:vidyashankar2014,ghosh:basu2016,wang:kucukelbir:blei2017,matsubara:knoblauch:briol:oates2022}. We hope to contribute to this emerging literature by providing a general framework for analyzing robustness in Bayesian procedures through the lens of our Bayesian M-posteriors. We highlight the following main aspects of our contributions:
\begin{enumerate}[(a)]
    \item \textbf{\emph{Frequentist guarantees:}} we show that M-posteriors are consistent and asymptotically normally distributed under a standard frequentist data generating process and some minimal regularity conditions on the M-estimation loss and prior. This result is formally stated as a BvM theorem for a class of weighted M-posteriors where, in addition to an arbitrary loss function, we also introduce weights for each observation.  Special cases of this analysis include the BvM-type results for  alpha-posteriors from \cite{avellamedinaetal2021} and generalized posteriors from \cite{chernozhukov:hong2003}.  Interestingly, introducing multiple weights affects both the location and the variance of the limiting distribution, contrary to the result with just a single tempering weight, where only the limiting variance is affected.  We also show how robustifying a posterior can lead to contraction around the wrong parameter value and propose a simple bias correction that is inspired by a well-known Fisher consistency correction introduced by \cite{huber1964} in the context of M-estimation.
    \item 
\textbf{\emph{Posterior influence function:}} we characterize the infinitesimal robustness to outliers of M-posteriors by deriving their influence function.  Our characterization of the posterior influence function, inspired by ideas first considered in \cite{ghosh:basu2016, matsubara:knoblauch:briol:oates2022}, is completely model agnostic and serves as a tool to assess the sensitivity of the posterior distribution to infinitesimal perturbations. Our results for M-posterior connect the boundedness of the posterior influence function to the boundedness of the score function of the corresponding loss, which is analogous to known influence function results for M-estimators in the frequentist setting \citep{hampeletal1986}.  We show that a bounded score function is also a necessary condition for an M-posterior to have a bounded influence function and emphasize the importance of the prior in the case where the reference M-estimator is not defined by a convex loss.  We also study  the influence functions of important posterior functionals such as the posterior mean and posterior quantiles. 
    \item 
\textbf{\emph{Posterior breakdown point:}} to the best of our knowledge, this is the first work to define a Bayesian counterpart of the finite sample breakdown point, which we call the \emph{posterior breakdown point}.  This global  measure of robustness quantifies how many data points can be arbitrarily perturbed before the posterior density itself is moved arbitrarily.  Our approach leverages ideas derived in frequentist settings.  Namely, we build on concepts introduced in the work of \cite{huber1984}, which derives results for estimators arising from both convex losses and losses with redescending score functions.  
Once again, our analysis demonstrates the importance of the prior in M-posterior robustness, which leads to different conclusions from those  corresponding to M-estimators in the frequentist setting.  In the case of uninformative priors, we retrieve similar results to those in \cite{donoho:huber1983} for convex M-estimators.  Namely, if the convex loss defining the M-posterior has a bounded score, the posterior breakdown point is $1/2$.   Interestingly, priors with lighter than exponential tails lead to a strange phenomenon when combined with robust convex losses: the posterior breakdown point does not exist, in the sense that by moving all of our data points, we cannot make the posterior arbitrarily bad. Similarly, M-posteriors associated to bounded loss functions like Tukey's loss or the Huber skip loss cannot be broken. This is an undesirable property that suggests that in the context of M-posteriors one should only consider robust losses that lead to Gibbs measures that can be viewed as likelihoods. 
We extend our posterior breakdown point results to posterior functionals, namely posterior means and posterior quantiles, showing that these functionals inherit the breakdown point of the M-posterior.
\end{enumerate}

\section{Preliminaries and motivation}
\label{sec:robust_statistics}

Robust statistics is a mature field of mathematical statistics that was pioneered by the groundbreaking work of \cite{huber1964,hampel1968}. Book-length expositions on the topic include \citep{huber1981, huber:ronchetti2009, hampeletal1986, maronnaetal2019}. See \cite{avellamedina:ronchetti2015} for a short overview that covers all the key concepts introduced in this section.  The primary goal in  robust statistics is to develop methods that give stable results even in settings where deviations from the stochastic assumptions of the model occur.
The field of robust statistics provides  a mathematical framework both to account for data corruptions and analyze the effect of such corruptions on statistical methods.  In this work, we study two classical tools for quantifying robustness in the robust statistics literature: the influence function and the breakdown point.

\emph{Notation and statistical framework.} 
Let  $\mathcal{F}_n = \{f_n(\cdot \mid \theta): \theta \in\Theta\subset \mathbb{R}^p \}$, where $\Theta\subset\mathbb R^p$ is the parameter space, be a parametric family used as a statistical model for the i.i.d.\ random sample $X^n = (X_1,\dots,X_n) \in \mathcal{X}^n$, where $\mathcal{X}\subset\mathbb{R}^d$ denotes the sample space. This model will be assumed to be well specified as the true density $f_n(\cdot \mid \theta^*)$ of the random sample $X^n$ belongs to $\mathcal{F}_n$. We will be particularly interested in estimating the true parameter $\theta^*$. We let $F_n$ denote the empirical distribution function induced by the random sample $X_1,\dots,X_n$. Sometimes we will use $\mathcal{F}$ to denote a generic space of distributions, and we are often interested in studying functionals of the form $T:\mathcal{F}\to\Theta$. We occasionally also consider the corresponding statistics $T:\mathcal{X}^n\to\Theta$, which slightly overloads the notation but keeps the presentation simple.

\emph{The influence function.} A fundamental idea  in robust statistics is to study a statistic of interest as a functional of an underlying data-generating distribution, and the influence function is a tool used to gauge the robustness of such a statistical functional in an infinitesimal sense.

\begin{definition}
 The influence function of a functional $T$ at a point $x\in\mathcal{X}$ for a distribution $F$ is the G{\^a}teaux derivative  
\begin{equation*}
    \mathrm{IF}(x;T,F):=\lim_{\epsilon\to 0+}\frac{T(F_\epsilon)-T(F)}{\epsilon},
\end{equation*} 
where $F_\epsilon=(1-\epsilon)F+\epsilon\delta_x$ and $\delta_x$ is a mass point at $x$. 
\end{definition}
An appealing feature of the  influence function is that it can be interpreted as describing the effect of an infinitesimal contamination at the point $x$ on a statistical functional. Indeed, if a functional $T(F)$ is sufficiently regular, a von Mises expansion \citep{vonmises1947,hampel1974, hampeletal1986} yields
\begin{equation}
\label{vonMises}
T(G) = T(F)+\int \IF(x; T,F)\mathrm{d}(G-F)(x) +o\big(\|F-G\|_\infty\big).
\end{equation}
Considering the neighborhood $\mathcal{F}_\epsilon=\{F^{(\epsilon)}| F^{(\epsilon)}=(1-\epsilon)F+ \epsilon G, ~$G$ \mbox{ an arbitrary distribution}\}$ and the  approximation in \eqref{vonMises},   we see that the influence function can be used to linearize the ``bias'' of $T(F)$ in the neighborhood $\mathcal{F}_\epsilon$. Hence, a statistical functional with a bounded influence function will have a bounded approximate bias in a neighborhood of $F$ and statistical functionals with this property are called B-robust in the literature \citep{hampeletal1986}. 

\emph{The breakdown point.}  The breakdown point, another fundamental tool for quantifying robustness, was introduced by \cite{hampel1968,hampel1971} in what is now called the asymptotic or population form. The perhaps more popular finite sample version of the breakdown point, introduced later in \cite{donoho:huber1983}, answers the following general question: given a sample $X^n = (X_1,\dots,X_n) \in \mathcal{X}^n$, how many arbitrarily bad observations can a statistic $T(X^n)$ tolerate before it gives arbitrarily bad results?

\begin{definition}
    \label{def:BP}
    The \emph{finite sample breakdown point} of a statistic $T:\mathcal{X}^n\to\mathbb{R}^p$ at a given sample $X^n$ is the fraction
\[
    \varepsilon^*(T,X^n)
    :=
    \min\biggl\{\frac{m}{n}:\ 
    \sup_{X^{(n,m)} \, \in  \, \mathcal{B}_H(X^n,m)} \bigl\|\,T(X^{(n,m)})-T(X^n)\bigr\|_2 = \infty\biggr\},
\]
where $\mathcal{B}_H(X^n,m)=\{\tilde{X}^{n}\in \mathcal{X}^n: \sum_{i=1}^n\mathbbm{1}\{\tilde{X}_i\neq X_i\}\leq m\}$ is the collection of datasets of size $n$ such that $m$ or fewer data points are different from the given sample $X^n$.
\end{definition}

Intuitively, a breakdown point of $1/2$ is the maximal value one can expect. For instance, it is well known that the breakdown point of any translation-equivariant location estimator is at most $1/2$ \citep{donoho:huber1983}.

\subsection{M-estimators} 
M-estimators are a broad class of estimators that generalize the usual maximum likelihood estimators. They are naturally appealing  for robust statistics \citep{huber1964,huber:ronchetti2009} and will serve as motivation for our robust Bayesian posteriors. In particular, we are interested in M-estimators $\hat\theta=T(F_n)$ defined as minimizers of the form
\begin{equation}
\label{eq:M-estimator}
    \hat\theta
    =
    \mbox{argmin}_{\theta\in \Theta}\frac{1}{n}\sum_{i=1}^n\rho(X_i,\theta)
    =
    \mbox{argmin}_{\theta\in \Theta} \mathbb{E}_{F_n}[\rho(X,\theta)]
\end{equation}
where $\rho:\mathcal{X}\times\Theta\to \mathbb{R}_{\geq 0}$ is a loss function. 
Under mild conditions, $\rho$ is differentiable and convex,  we can also see $\hat\theta$ as the solution to the estimating equation $
\frac{1}{n}\sum_{i=1}^n\psi(X_i,\hat\theta)=0$, where $\psi(x,\theta)=\frac{\partial}{\partial\theta}\rho(x,\theta)$ is called a score function.

Assuming an i.i.d.\ random sample from distribution $F$, under some standard and mild conditions \citep[Ch.\ 6]{huber:ronchetti2009}, including $\theta^*=\mbox{argmin}_\theta\mathbb{E}_F[\rho(X,\theta)]$ and $\mathbb{E}_F[\psi(X,\theta^*)]=0$, we have that $\hat\theta$ is asymptotically normally distributed as $n\to\infty$. More precisely,
\begin{equation*}
\sqrt{n}(\hat\theta-\theta^*)\to_{d} N(0,V(T,F)),
\end{equation*}
where $V(T,F)=\mathbb{E}_F[\IF(X;T,F)\IF(X;T,F)^\top]$ and the influence function is shown to be equal to 
 \begin{equation}
 \label{IF}
 \IF(x;T,F)= \Big(M(T,F)\Big)^{-1}\psi(x,T(F)),
 \end{equation}
 where $M(T,F)=-\frac{\partial}{\partial \theta}\mathbb{E}_F[\psi(X,\theta)]\big|_{\theta=\theta^*}$. Consequently,  M-estimators defined by bounded score functions  $\psi$ are said to be B-robust. In the case of one-dimensional location models where $\psi(x,\theta)=\psi(x-\theta)$, \cite{donoho:huber1983, huber1984} also showed that a bounded $\psi$ also guarantees a finite sample breakdown point of $1/2$. In general dimension the results are not as simple and a bounded $\psi$ is in general not enough to guarantee a breakdown point of $1/2$ \citep{maronna1976, rousseeuw:yohai1984,rousseeuw1984,davies1987,yohai1987,lopuhaa:rousseeuw1991}.

 \subsection{M-posteriors}
The main statistical objects of interest in this work are generalized posteriors of the form
\begin{equation}
\label{eq:M-posterior}
    \pi^\loss_{n}(\theta \mid F_n) \equiv \frac{\exp \left ( -n\mathbb{E}_{F_n}[\loss(X, \theta)]  \right ) \pi(\theta)}{\int_{\Theta} \exp \left ( -n \mathbb{E}_{F_n}[\loss(X, \theta')] \right )  \pi(\theta') \, d\theta'}=\frac{\exp \left ( -\sum_{i=1}^n \loss(X_i, \theta)  \right ) \pi(\theta)}{\int_{\Theta} \exp \left ( -\sum_{i=1}^n \loss(X_i, \theta') \right )  \pi(\theta') \, d\theta'},
\end{equation}
where, as before, $\rho:\mathcal{X}\times\Theta\to \mathbb{R}_{\geq 0}$ is a loss function.
We call distributions of the form in \eqref{eq:M-posterior} \emph{M-posteriors} given their intuitive connection to M-estimators of the form in \eqref{eq:M-estimator}. Our notation aims to highlight the fact that these posteriors can be viewed as functionals of the empirical distribution $F_n$, and this notation will also be convenient when we seek to study their robustness properties in what follows.
We will see throughout this paper that the connections between M-posteriors and M-estimators are quite deep and are illuminated by both the asymptotic  and  robustness properties of the M-posterior we study. In particular, the Bernstein-von Mises theorem that we establish is analogous to asymptotic normality of the M-estimator. Furthermore, the sufficient conditions guaranteeing that the M-posterior has a bounded influence function and high breakdown point will be very similar to those required by standard M-estimators. However, our work also demonstrates the role that the choice of prior plays in the robustness properties of M-posteriors and how the interplay between the score and the prior tends to be the characterizing property of the M-posterior.

We note that \cite{minsker:srivastava:lin:dunson2017} used the term M-posterior to refer to a different robustification of the standard posterior based on calculating the median of subset posteriors. M-posteriors of the form \eqref{eq:M-posterior} studied in this work have appeared in the literature under various names including quasi-posteriors \citep{chernozhukov:hong2003}, general belief updates \citep{bissiri:holmes:walker2016}, and generalized posteriors \citep{miller2021}. The robustness properties of special cases of the M-posterior have also been studied in \cite{hooker:vidyashankar2014,ghosh:basu2016,ghosh:majumder:basu2022,matsubara:knoblauch:briol:oates2022,altamirano:briol:knoblauch2023}. Our general framework covers most of these settings that have previously been considered in the literature, and we will discuss connections to previous work more carefully when presenting our main results.

\subsection{Motivating examples }

The following three models will be running examples throughout the paper. They motivated our work and will serve to demonstrate the usefulness of our theoretical findings throughout the paper.

\subsubsection{Huber location posterior.}
\label{subsection::huber-location-posterior}
The Huber loss, introduced by \cite{huber1964}, is a robust alternative to the squared error that interpolates between quadratic and linear penalization of residuals. 
The loss is defined as 
\[
    \loss_c(x) =\
    \begin{cases}
        \tfrac{1}{2}x^2, & |x|\leq c, \\[6pt]
        c |x| - \tfrac{1}{2}c^2, & |x| > c,
    \end{cases}
\]
where the tuning parameter \( c > 0\) controls the threshold at which the function transitions from quadratic to linear. 
In the same fashion, for a given prior \( \pi(\theta) \), we define a \emph{Huber location posterior} as an M-posterior corresponding to the Huber loss \(\loss_c(x)\):
\begin{equation} \label{eqn::huber-location-posterior}
    \pi_n^{\loss_c}(\theta \mid F_n) \propto
    \exp \left (- n \E_{F_n} \bigl [ \loss_c (X - \theta) \bigr ] \right ) \pi(\theta) =
    \exp \Big ( - \sum_{i=1}^n \loss_c ( X_i - \theta )  \Big ) \pi(\theta) .
\end{equation}

\subsubsection{Bayesian quantile regression.}
\label{subsetion::bayesian-quantile-regression}
Quantile regression provides a flexible alternative to mean regression by targeting conditional quantiles of the response distribution rather than its expectation \citep{koenker:bassett1978,koenker2005}. The central idea is to model the conditional $\tau$-quantile of the responses as a linear function of the covariates. For a design matrix \( X^n=(X_{1},\dots,X_{n})^\top \in\R^{n\times d} \) and responses \( Y^n=(y_{1},\dots,y_{n})\in\R^n \), estimating conditional quantiles boils down to finding the slope parameter that solves the M-estimation problem
\begin{equation*}
    \hat\beta_{\tau}=\mbox{argmin}_{\beta\in\mathbb{R}^d}\sum_{i=1}^n\rho_\tau(y_i-X_i^\top\beta),
\end{equation*}
where $\rho_\tau$ is the check loss defined as 
\(
    \loss_\tau(x) = x ( \tau - \mathbf{1}\{ x < 0 \} ),
\)
which penalizes positive and negative values asymmetrically.  We will call 
   \emph{Bayesian quantile regression} the natural M-posterior corresponding to the check loss \( \loss_\tau\), i.e.,
\begin{equation*}
    \pi_n^{\loss_\tau} (\beta \mid X^n, Y^n) \propto
    \exp \left (- n \E_{(X, y) \sim F_n} \bigl [ \loss_\tau (y - X^\top \beta) \bigr ] \right ) \pi(\beta)  =
    \exp \Big ( - \sum_{i=1}^n \loss_\tau (y_i - X_i^\top \beta)  \Big ) \pi(\beta) .
\end{equation*}
This M-posterior corresponds to the asymmetric Laplace likelihood introduced in \citet{yu:moyeed2001}. %
There have been several developments extending this approach, addressing both theoretical and computational challenges. For example, \cite{yang:wang:he2016} studied the posterior inference properties under the asymmetric Laplace model, while \citet{li:he2024} proposed a pseudo-Bayesian approach for sparse quantile regression. 

\subsubsection{Bayesian data reweighting.}
\label{subsection::Bayesian-Data-Reweighting}
Here we introduce the Bayesian data reweighting procedure studied in \cite{wang:kucukelbir:blei2017}. 
Starting with sample \(X^n = (X_1, \ldots, X_n)\), prior \( \pi(\theta)\) and likelihood \( f(\cdot \mid \theta)\), we define the procedure as follows:
\begin{enumerate}
    \item Define a probabilistic model \( \pi(\theta) \prod_{i=1}^n f(X_i \mid \theta)\).
    \item Raise each likelihood to a positive latent weight \( \alpha_i\), where each of the weights \( \alpha_i \) is sampled independently from the prior distribution  \( \pi_\alpha(\alpha)\). 
    We define the joint distribution:
    \[
        \pi(X^n, \theta, \alpha^n) = \frac{1}{Z} \pi(\theta) \prod_{i=1}^n \pi_\alpha (\alpha_i) f(X_i \mid \theta) ^ {\alpha_i},
    \]
    where \( Z \) is the normalizing constant.
    \item Infer the posterior of both the latent variables \(\theta\) and the weights \( \alpha^n\); namely, \( \pi(\theta, \alpha^n \mid X^n). \)
\end{enumerate}

The idea behind this approach is following: if the data point \( X_i\) is unlikely under the likelihood, the value \( \alpha_i\) should downweight the influence of this point on the posterior of \( \theta \mid X^n\). 
In their work, \cite{wang:kucukelbir:blei2017} demonstrated robustness properties of the posterior mean. To be more precise, they show the boundedness of the influence function of the posterior mean of $\theta\mid X^n$ under certain choices of priors on weights ~\( \alpha^n \).

As the main object of interest in this setting, we define the reweighted posterior to be equal to the marginal distribution over the latent variable \( \theta \), given the observed data \( X^n\); that is, we integrate out the weights from the joint posterior from step (3) of the procedure:
\begin{align}
\label{def::alpha-posterior}
 \nonumber   \pi_\alpha(\theta \mid X^n) &:= \int_{\R^n} \pi(\theta, \alpha^n \mid X^n) \, d\alpha^n= \frac{\int_{\R^n} \pi(\theta, \alpha^n, X^n) \, d\alpha^n }{\int_{\Theta} \int_{\R^n} \pi(\theta, \alpha^n, X^n) \, d\alpha^n \, d\theta}\\
 \nonumber &= \frac{\pi(\theta) \exp \left ( \sum_{i=1}^n \log \int_\R \pi_\alpha(\alpha_i) f(X_i \mid \theta)^{\alpha_i} \, d\alpha_i \right ) }{\int_{\Theta} \int_{\R^n} f(\theta, \alpha^n, X^n) \, d\alpha^n \, d\theta}\\
 &  \propto  \exp \left ( - n\mathbb{E}_{F_n}[\loss(X, \theta) ]\right )\pi(\theta),
\end{align}
where  we defined 
\(
    \loss (x, \theta) := -\log \int_\R \pi_\alpha(\alpha) f(x \mid \theta)^\alpha \, d\alpha,
\). We conclude that $\pi_\alpha(\theta \mid X^n)$ is an M-posterior with a loss defined by the likelihood and $\pi_\alpha$. This connection will enable us to complement the work of \cite{wang:kucukelbir:blei2017} by establishing frequentist asymptotic guarantees and deriving robustness properties.

\section{Asymptotic frequentist guarantees}
\label{Sec::Asymptotics}

In this section, we study the asymptotic properties of \emph{Weighted M-posteriors}, a combination of M-posteriors with reweighted posteriors.
Weighted M-posteriors arise from a simple but powerful idea: by allowing each observation to contribute to the overall loss with its own nonnegative weight, we gain a flexible mechanism for addressing a variety of practical and theoretical challenges in modern Bayesian inference. In Section \ref{subsection::Bayesian-Data-Reweighting} we saw that reweighting can be motivated by robustness considerations \cite{wang:kucukelbir:blei2017}. In a frequentist setting, this idea is intuitively connected to that of weighted M-estimators of \cite{field:smith1994,markatou:basu:lindsay1997, markatou:basu:lendsay1998, markatou2000}, the robust filter of \cite{calvet:czellar:ronchetti2015} or the robust  Kalman filter of \cite{duran:altamirano:shestopaloff:sanchez:knoblauch:jones:briol:murphy2024} which applies a so-called \textit{weighted observation likelihood filter}. Weighting schemes are also natural for multilevel data and post-stratification in survey sampling \citep{gelman:hill2007}. They can also be used in the context of severe class imbalance  as often seen in rare-event classification tasks, assigning larger weights to under-represented examples mitigates the tendency of the posterior to be dominated by majority-class losses \citep{rosenblattetal2025}.

\subsection{Framework}

We introduce \emph{Weighted M-posteriors}, which will allow us to state the BvM result in full generality. We then demonstrate how some known results \citep{kleijn:vandervaart2012, avellamedinaetal2021, chernozhukov:hong2003} follow from this statement, along with some new observations. For this, we first need to define the weighted empirical measure.

\begin{definition}[Weighted empirical distribution function]
    Let \( (X_i)_{i=1}^n \) be observations from the statistical model \( \mathcal{F}_n \), and let \( \boldsymbol{\alpha} = (\alpha_i)_{i=1}^n \) be non-negative weights.  The \emph{weighted empirical distribution function} is defined by
    \begin{equation}
        \label{def::weighted-empirical}
        \wempirical(x) \;=\; 
        \frac{1}{n}
        \sum_{i=1}^n \alpha_i \, \mathbf{1}\{X_i \leq x\},
        \quad x \in \mathbb{R}.
    \end{equation}
\end{definition}

Note that the way we defined the weighted empirical distribution function \( \wempirical \), it is not necessarily a true probabilitt distribution function since \( \wempirical(+ \infty) = \tfrac{1}{n} \sum_{i=1}^n \alpha_i\). 
For a given sequence of positive weights \( \boldsymbol{\alpha} \equiv (\alpha_i)_{i = 1}^{\infty}\), we define the weighted M-estimator by \( \hat{\theta}^{\boldsymbol{\alpha}}_{\loss} \) as a solution to
\[
\mathbb{E}_{\wempirical}[\psi(X,\hat{\theta}^{\boldsymbol{\alpha}}_{\loss} )]=\frac{1}{n}\sum_{i=1}^n \alpha_i \psi(X_i, \hat{\theta}^{\boldsymbol{\alpha}}_{\loss} ) = 0.
\]
where \( \psi \) is a score function corresponding to the loss \( \loss \).
We define the weighted M-posterior analogously.
\begin{definition}[Weighted M-posterior]
    Starting from the statistical model \( \mathcal{F}_n \), a prior density \( \pi \) for \( \theta \) over \( \Theta \), and a non-negative sequence of weights \( \boldsymbol{\alpha} \equiv (\alpha_i)_{i = 1}^{\infty}\), the weighted M-posterior is defined as the distribution having density:
    \begin{equation}
        \label{def::weighted-M-posterior}
        \pi^\loss_{n}(\theta \mid \wempirical) \equiv \frac{\exp \left ( -n\E_{\wempirical} \bigl [ \loss(X, \theta) \bigr ] \right ) \pi(\theta)}{\int_{\Theta} \exp \left ( -n\E_{\wempirical} \bigl [ \loss(X, \theta') \bigr ] \right )  \pi(\theta') \, d\theta'}.
    \end{equation}
\end{definition}
Clearly, taking $\alpha_i=1$ for all $i\in\mathbb{N}$, we recover the M-posteriors \eqref{eq:M-posterior}. Keeping general weights $(\alpha_i)_{i=1}^\infty$ and taking a negative log likelihood for the loss \( \loss = -\log f \), the above definition retrieves the standard definition of reweighted likelihood of \citep{wang:kucukelbir:blei2017} in \Cref{subsection::Bayesian-Data-Reweighting} with prior mass points at $(\alpha_i)_{i=1}^n$:
\[
\pi_{n}(\theta \mid \wempirical) \propto \exp \Big ( \sum_{i=1}^n \alpha_i \log f(X_i \mid \theta) \Big ) \pi(\theta) = \pi(\theta) \prod_{i=1}^n f(X_i \mid \theta)^{\alpha_i} .
\]
If $\alpha_i=\alpha$ for all $i\in\mathbb{N}$ we get again the $\alpha$-posterior.

We will study the asymptotic properties of the weighted M-posterior using a condition that is similar to the stochastic LAN assumption, but modified to take both weights and different loss functions into account.  We denote by $P_0$ the distribution of the i.i.d.\ random sample $X_1,\dots,X_n$ and remember that we assumed a well-specified model with true parameter $\theta^*$.
\begin{assumption}[Weighted M-LAN]
\label{assump::WMLAN}
    For any sequence of positive weights with a finite second moment, denoted \( \boldsymbol{\alpha} \equiv (\alpha_i)_{i = 1}^{\infty}\), let \(\overline{\alpha}_n := n^{-1}\sum_{i=1}^n \alpha_i\) be their average. Then there exists a positive definite matrix \( V_{\theta^*} \), such that
    \begin{equation*}
        R_{n, \mathrm{\boldsymbol{\alpha}}}(h) 
        :=
        \sum_{i=1}^n \alpha_i \left( \loss(X_i, \theta^* ) 
        -
        \loss(X_i, \theta^* + h/\sqrt{n}) \right) - h^\top \overline{\alpha}_n V_{\theta^*} \frac{1}{\sqrt{n}} (\mestimator - \theta^*)
        +
        \frac{1}{2} h^\top \overline{\alpha}_n V_{\theta^*} h,
    \end{equation*}
    satisfies
    $\sup_{h \in K} \, \lvert R_{n, \mathrm{\boldsymbol{\alpha}}}(h) \rvert \rightarrow 0$ in $P_0$-probability for any compact set $K \subseteq \mathbb{R}^p$.
\end{assumption}
We show that the weighted M-LAN assumption follows from the same regularity conditions used for the stochastic LAN property in i.i.d.\ models \citep{kleijn:vandervaart2012}. Specifically, assume the per–observation loss is differentiable in probability at the true parameter, is locally Lipschitz on a neighborhood, and the population risk admits a second–order (quadratic) expansion around the true parameter. Under these assumptions, the weighted M-LAN condition holds (see \Cref{lemma::WMLAN_conditions} in \Cref{appendix::supporting-lemmas}).

The following assumption controls the rate of concentration of the Weighted M-posterior around $\theta^*$ and, combined with the weighted M-LAN assumption introduced before, will allow one to derive BvM-type statements.
\begin{assumption}
\label{assump::concentration}
    We say that the weighted M-posterior \( \pi^\loss_{n}(\theta \mid \wempirical) \) defined in \eqref{def::weighted-M-posterior}, concentrates at rate $\sqrt{n}$ around $\theta^*$ if for every sequence of constants $r_n \to \infty$,
    \begin{equation}
    \label{assump:concentration_2}
        \mathbb{E}_{P_0} \left[ \int_\Theta \mathbf{1} \left\{ \|\sqrt{n} (\theta - \theta^*) \| > r_n \right\} \pi^\loss_{n}(\theta \mid \wempirical) d\theta \right] \to 0.
    \end{equation}
\end{assumption}
\subsection{Bernstein-von Mises theorems for weighted M-posteriors}
\begin{theorem}
\label{thm::BvM}
    Let \(\boldsymbol{\alpha} \equiv (\alpha_i)_{i = 1}^{\infty}\) be a sequence of positive (constant) weights with finite second moment. Suppose that the prior density $\pi$ is continuous and positive on a neighborhood around the true parameter $\theta^*$. Letting $d_{\mathrm{TV}}(\cdot, \cdot)$ denote the total variation distance, if Assumptions \ref{assump::WMLAN} and \ref{assump::concentration} hold,
    \begin{equation}
    \label{eq:Thm1}
        d_{\mathrm{TV}} \left( \pi^\loss_{n}(\cdot \mid \wempirical), \phi(\cdot \mid \mestimator, V_{\theta^*}^{-1} / (\overline{\alpha}_n n)) \right) \to 0,
    \end{equation}
    in $P_0$-probability, where $V_{\theta^*}$ is the positive definite matrix satisfying Assumption \ref{assump::WMLAN}.
\end{theorem}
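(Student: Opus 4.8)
The plan is to run the classical le~Cam / Bernstein--von Mises program in the local parameter $h = \sqrt{n}(\theta - \theta^*)$. First I would pass to these coordinates: because total variation distance is invariant under the bijection $\theta \mapsto h$, and because the target density $\phi(\cdot \mid \mestimator, V_{\theta^*}^{-1}/(\overline{\alpha}_n n))$ transforms into the Gaussian $q_n = N(\Delta_n, (\overline{\alpha}_n V_{\theta^*})^{-1})$ with $\Delta_n := \sqrt{n}(\mestimator - \theta^*)$, it is enough to control the TV distance between the law of $h$ under the weighted M-posterior \eqref{def::weighted-M-posterior} and this Gaussian. Here \Cref{assump::WMLAN}, together with completing the square in the quadratic form, shows that the unnormalized density of $h$ equals an $h$-free constant times $W_n(h)\,\exp(-\tfrac{1}{2}(h-\Delta_n)^\top \overline{\alpha}_n V_{\theta^*}(h-\Delta_n))$, where $W_n(h) := \tfrac{\pi(\theta^*+h/\sqrt{n})}{\pi(\theta^*)}\exp(R_{n,\boldsymbol{\alpha}}(h))$ collects the prior ratio and the M-LAN remainder. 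The crucial point is that this factorization is \emph{exact}, since $R_{n,\boldsymbol{\alpha}}$ is defined as the exact remainder.

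Let $p_n$ denote the normalized posterior density of $h$; then $p_n(h) = q_n(h) W_n(h)/\bar{W}_n$ with $\bar{W}_n := \int_{\R^p} W_n(h)\,q_n(h)\,dh$ the ratio of normalizing constants. A Scheffé-type manipulation then yields
\begin{equation*}
\int_{\R^p} \lvert p_n(h) - q_n(h)\rvert\,dh \;\le\; \frac{2}{\bar{W}_n}\int_{\R^p} q_n(h)\,\lvert W_n(h) - 1\rvert\,dh,
\end{equation*}
using $\lvert \bar{W}_n - 1\rvert \le \int q_n\lvert W_n - 1\rvert$. Hence the entire theorem reduces to the two claims that $\int q_n\lvert W_n - 1\rvert \to 0$ in $P_0$-probability and that $\bar{W}_n \to 1$ (the latter both keeping the prefactor bounded away from zero and following once the former is shown).

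To establish these I would split $\R^p$ into a ball $\{\|h\| \le M_n\}$ and its complement. The hard part, and the crux of the argument, is that \Cref{assump::WMLAN} controls $R_{n,\boldsymbol{\alpha}}$ uniformly only on fixed compacta, whereas \Cref{assump::concentration} controls the posterior tail only beyond radii tending to infinity, so these two regimes must be reconciled. I would bridge them by a diagonalization: since $\sup_{\|h\|\le M}\lvert R_{n,\boldsymbol{\alpha}}(h)\rvert \to 0$ in probability for every fixed $M$, there exists a sequence $M_n \to \infty$, which can moreover be taken with $M_n = o(\sqrt{n})$, along which $\sup_{\|h\|\le M_n}\lvert R_{n,\boldsymbol{\alpha}}(h)\rvert \to 0$. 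On this slowly growing ball, continuity and positivity of $\pi$ at $\theta^*$ force $\sup_{\|h\|\le M_n}\lvert \pi(\theta^*+h/\sqrt{n})/\pi(\theta^*) - 1\rvert \to 0$, whence $\sup_{\|h\|\le M_n}\lvert W_n(h) - 1\rvert \to 0$, which controls the bulk of $\int q_n\lvert W_n-1\rvert$ and of $\bar W_n$. Choosing $r_n = M_n$ in \Cref{assump::concentration} sends the posterior mass $\int_{\|h\|>M_n} p_n\,dh$ to $0$, while the Gaussian tail $\int_{\|h\|>M_n} q_n\,dh$ vanishes because $M_n \to \infty$ and $\Delta_n$ is tight (the latter following from the $\sqrt{n}$-consistency and asymptotic normality of the weighted M-estimator $\mestimator$). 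Since $\int_{\|h\|>M_n} q_n W_n = \bar{W}_n \int_{\|h\|>M_n} p_n \to 0$ and $\int_{\|h\|>M_n} q_n \to 0$, I obtain simultaneously $\bar{W}_n \to 1$ and $\int q_n\lvert W_n - 1\rvert \to 0$ in $P_0$-probability; feeding these into the Scheffé bound and reverting the change of variables gives \eqref{eq:Thm1}.
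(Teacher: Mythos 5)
Your proof is correct, but it follows a genuinely different route from the paper's. The paper imports the two-point-ratio argument of \cite{avellamedinaetal2021} (itself modeled on Theorem 10.1 of \cite{Vaart_1998}): it defines $f_n(g,h)=\{1-\tfrac{\phi_n(h)}{\pi_n^{\mathrm{WMLAN}}(h\mid\cdot)}\tfrac{\pi_n^{\mathrm{WMLAN}}(g\mid\cdot)}{\phi_n(g)}\}^+$, shows via \Cref{assump::WMLAN} that $\sup_{g,h\in\overline{B}_{\mathbf 0}(r_n)}f_n(g,h)$ is small with high probability (\Cref{lemma:ballconvergence}), and combines this with \Cref{assump::concentration} through the splitting in \eqref{eqn::BvM-proof-upper-bound-on-TV}, finishing with Markov's inequality. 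You instead factor the localized posterior density exactly as $p_n=q_nW_n/\bar W_n$, reduce the TV distance to $\int q_n\lvert W_n-1\rvert$ via a Scheff\'e-type bound, and bridge the compact-set control of $R_{n,\boldsymbol{\alpha}}$ with the growing-ball concentration by a diagonalization $M_n\to\infty$, $M_n=o(\sqrt n)$. Your self-consistent identity $\bar W_n(1-\epsilon_n)=1+o_{P_0}(1)$ with $\epsilon_n=\int_{\|h\|>M_n}p_n$ is a clean way to pin down the normalizing constant, and your argument is more self-contained than the paper's (which defers the bulk-region estimate to the cited reference). What the paper's two-point comparison buys is that it never needs to integrate $W_n$ over the whole space, only to compare density ratios at pairs of points inside the ball; what yours buys is transparency about exactly which integrals must vanish.

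One step you should make explicit: the Gaussian tail bound $\int_{\|h\|>M_n}q_n\,dh\to0$ requires tightness of $\Delta_n=\sqrt n(\mestimator-\theta^*)$, which you attribute to asymptotic normality of the weighted M-estimator. That is not literally among the hypotheses of the theorem (it holds under the sufficient conditions of \Cref{lemma::WMLAN_conditions}, and the paper's imported argument relies on the same fact implicitly). It can, however, be derived from \Cref{assump::WMLAN,assump::concentration} alone: if $\|\Delta_n\|$ were unbounded in probability, the exact density shape on a fixed ball would force the posterior mass of that ball to vanish relative to the mass of a surrounding shell, contradicting the uniform tightness of the localized posterior implied by \Cref{assump::concentration}. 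Adding a sentence to this effect would close the only loose end in your argument.
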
 
\begin{remark}
    While the above statement is formulated for a fixed sequence of weights, an analogous result holds when the weights are drawn independently at random. A more detailed discussion of this extension, together with its connections to Bayesian data reweighting (\Cref{subsection::Bayesian-Data-Reweighting}), is given in \Cref{asec::additional-BvM-results}.
\end{remark}

\Cref{thm::BvM} states that the weighted M-posterior behaves asymptotically as a multivariate normal distribution centered at the weighted M-estimator \( \mestimator \).
Furthermore, the result shows that the asymptotic covariance of the weighted M-posterior is given by  \( V_{\theta^*}^{-1} / (\overline{\alpha}_n n) \).
The weights influence the result through their mean; the asymptotic variance is inflated when \( \overline{\alpha}_n < 1 \), and deflated otherwise.
 \Cref{thm::BvM} is related to at least three types of similar results in the literature. 
First, by taking all weights to be equal to one, i.e.\ \( \alpha_n = 1 \) for all \( n\), and taking the loss to be negative log-likelihood, i.e.\ \( \rho = -\log f\), we obtain a standard BvM-type result. While we assume the well-specified case for simplicity, all arguments can be extended if we assume that \( \theta^*\) is the pseudo-true parameter, and hence we retrieve the result of \cite{kleijn:vandervaart2012}.
Second, by again considering the negative log-likelihood, and taking all weights to be equal to some constant, i.e.\ \( \alpha_n = \alpha \) for all \( n\), we derive the BvM-type result for the \( \alpha \)-posteriors of \cite{avellamedinaetal2021}. It is worth noting that by having only one weight parameter, the weight affects the limiting normal distribution only through the variance, and the limiting mean is equal to the standard MLE, unaffected by the choice of parameter \( \alpha \).
Third, by taking all weights to be equal to one, and considering an arbitrary loss function \( \rho \), we retrieve the BvM result of \cite{chernozhukov:hong2003}. Their expansion assumption is very similar to our weighted M-LAN condition (Assumption \ref{assump::WMLAN}).

\subsection{Examples}

\begin{example}[Huber location posterior]
    Consider the location model \(X_i \mid \theta \overset{i.i.d.}{\sim} N(\theta,1)\) and a prior \( \pi(\theta)=N(\mu_0,\sigma_0^2)\).
    Recall the setup of \Cref{subsection::huber-location-posterior} and let \( \loss_c \) be the Huber loss. 
    We proceed by showing that the Huber location posterior defined in \eqref{eqn::huber-location-posterior} concentrates around the true parameter \( \theta^* \). Let \( \psi_c(x):=\loss_c'(x) \) denote the Huber score. 
    By \Cref{thm::BvM}, we know that the M-posterior will concentrate around the M-estimator \( \hat{\theta}_\loss\), which solves the estimating equation
    $\sum_{i=1}^n \psi_c(X_i - \hat{\theta}_\loss) = 0.$
    Let \( \theta^* \) be the true model parameter. We have that
    \(
        \E_{X \sim N(\theta^*, 1)} [\psi_c  ( X-\theta^*  ) ]=0
    \)
    by the symmetry and the oddness of \( \psi_c \),
    so the loss is Fisher consistent at \( \theta^* \). Therefore, the M-posterior \( \pi^{\loss_c}_{n} (\cdot \mid F_n)\) will concentrate around the true model parameter \( \theta^* \).
\end{example}

We now turn our attention to the reweighted posteriors defined in \Cref{subsection::Bayesian-Data-Reweighting}.  We can show that robustifying the normal location model with weights drawn from a Gamma prior, the resulting reweighted posterior still concentrates around the true model parameter (see \Cref{example:gamma_weights} in \Cref{appendix::additional-results}). However, this need not be the case; data reweighting can actually lead to inconsistency. To that end, consider a similar setup to the one from the above example:
\begin{example}[Reweighted posterior: Exponential model]
\label{example::BvM-reweighted-posterior-2}
   Consider the setup of \Cref{subsection::Bayesian-Data-Reweighting} with the model \( X\mid\theta \overset{i.i.d.}{\sim} \mathrm{Exp}(\theta) \), and priors \( \pi(\theta) = N(\mu_0,\sigma_0^2) \) and \( \pi_\alpha(\alpha)= \Gamma(\kappa, \lambda) \).
    Again, a direct calculation (see \Cref{lemma::calculation-reweighted-posterior-exponential}) reveals that
    \begin{equation*}
        \rho(x, \theta) 
        = 
        \kappa\bigl[ \log(\lambda+\theta x-\log\theta)-\log\lambda \bigr],
        \quad
        \text{and}
        \quad
        \psi(x, \theta)
        =
        \frac{\kappa(x-1/\theta)}{\lambda+\theta x-\log\theta}.
    \end{equation*}
    Assume that the data is generated as \( X_i \overset{i.i.d.}{\sim} \mathrm{Exp}(1) \).
    To assess consistency, we evaluate the expectation of the score at \( \theta = 1 \):
    \[
    \begin{split}
        \E_{X \sim \mathrm{Exp}(1)} \bigl [ \psi(X, 1) \bigr ] 
        &=
        \E \left [ \frac{\kappa(X-1)}{\lambda+X} \right ]
        = 
        \kappa \left ( 1 - (\lambda + 1) \E \left [ \frac{1}{\lambda + X} \right ] \right ).
    \end{split}
    \]
    Now, since the function \( x \mapsto 1/(\lambda + x) \) is strictly convex, by Jensen's inequality,
    \[
        \E \left[ \frac{1}{\lambda + X} \right] > \frac{1}{\lambda + \E[X]} = \frac{1}{\lambda + 1}.
    \]
    Hence,
    \[
        \E [\psi(X, 1)] < \kappa \Big( 1 - (\lambda + 1) \cdot \frac{1}{\lambda + 1} \Big) = \kappa(1 - 1) = 0.
    \]
    This implies that the estimating equation has an asymptotic bias, since its expectation under the true model is negative at \( \theta = 1 \).
    In particular, this means the M-estimator \( \hat{\theta}_\rho \) will not converge to the true value \( \theta^* = 1 \).
    As a result, the M-posterior, which concentrates around this biased M-estimator as in \Cref{thm::BvM}, will also fail to concentrate around the true parameter.
\end{example}

\subsection{Bias Correction for M-posteriors}
A standard procedure for removing the asymptotic bias from an M-estimator proceeds by adjusting the estimating equation rather than the estimator itself.  
If \(\E_{P_{\theta^*}}[\psi(X, \theta^*)] =: B \neq 0\), the estimating equation \(\sum_{i=1}^n \psi(X_i, \theta) = 0\) will  have a solution \(\hat{\theta}_\loss\) that is asymptotically biased.  A standard bias–correction idea going back to \cite{huber1964} replaces \(\psi\) with the modified score  
\[
    \psi_{\mathrm{corr}}(x, \theta) := \psi(x, \theta) - B,
\]
so that \(\E_{P_{\theta^*}}[\psi_{\mathrm{corr}}(X, \theta^*)] = 0\). In other words, this correction restores Fisher consistency and ensures that the M–estimator is centered at \(\theta^*\) in the limit.  
We adapt this Fisher consistency adjustment idea for  M-posteriors and hence ensure their concentration around the true model parameter.
We define the bias–corrected loss  
\[
\rho_{\mathrm{corr}}(x, \theta) := \rho(x, \theta) - B\theta,
\]
which has a corresponding estimating equation that is equivalent to using \(\psi_{\mathrm{corr}}\) above, and hence yields an M-estimator \(\hat{\theta}_{n,\mathrm{corr}}\) that is Fisher consistent.  
Since the M-posterior is constructed from the bias–corrected loss, it inherits this property and concentrates at \(\theta^*\), eliminating the systematic shift in the posterior mode observed when using the uncorrected loss.

\begin{example}[(continued) Reweighted posterior: Exponential model] 

We will adopt a similar setup to that of \Cref{example::BvM-reweighted-posterior-2}, but now the goal is to construct a robust de-biased loss for the exponential model. 
 Consider the estimating equation for finding the maximum likelihood estimator of the exponential model:
\[
    \sum_{i=1}^n \hat\theta X_i = n  \Longleftrightarrow  \sum_{i=1}^n (\hat\theta X_i - 1) = 0.
\]

A simple way to make this estimation robust is to apply the Huber score to the summands, thereby changing the estimating equation to
\( 
    \sum_{i=1}^n \psi_c (\hat\theta X_i - 1) = 0.
\)
As shown in the left panel of \Cref{fig::weighted-posterior-not-consistent}, this results in the inconsistency of the corresponding M-estimator since \( \E_{X \sim \text{Exp}(\theta^*)}[\psi_c( \theta^* X - 1 )] \neq 0\).  To fix this, we can define
\(
    \widetilde \psi_c(x) := \psi(x) - B,
\)
where 
\[
    B := \E_{X \sim \text{Exp}(\theta^*)}[\psi_c( \theta^* X - 1 )] = \E_{Y \sim \text{Exp}(1)}[\psi_c( Y - 1 )],
\]
does not depend on the unknown \( \theta^* \).  By integrating the estimating equation from above, we derive that the corresponding loss is equal to
\(
    \rho(x, \theta) = \frac{1}{x} \rho_c (\theta x - 1),
\)
where \( \rho_c \) is the Huber loss.  Accordingly, the bias-corrected loss is equal to
\[
    \rho_{\text{corr}}(x, \theta) = \frac{1}{x} \rho_c (\theta x - 1) - \bar B \theta,
\]
where \( \bar B\) is a Monte Carlo estimate of \( B\).
The results, displayed in Figure~\ref{fig::weighted-posterior-not-consistent}, show that the original M-posterior is sharply concentrated around a mode above the true value, while the bias–corrected M-posterior centers tightly on \(\theta^* = 1\), confirming that the correction restores posterior consistency.

The bias-corrected loss that we constructed can be viewed as a special case of the robust quasilikelihood of \cite{cantoni:ronchetti2001} which was introduced in the more complex setting of generalized linear models and has been successfully used in the construction of robust generalized additive models \citep{alimadad:salibian-barrera2011,croux:gijbels:prosdocimi2012} and high dimensional generalized linear models \citep{avellamedina:ronchetti2018}. The alternative robust loss construction of \cite{bianco:yohai1996,bianco:boente:rodriguez2013} could also be used for M-posteriors  for exponential families.

\begin{figure}[htb]
  \centering
  \includegraphics[width=\textwidth]{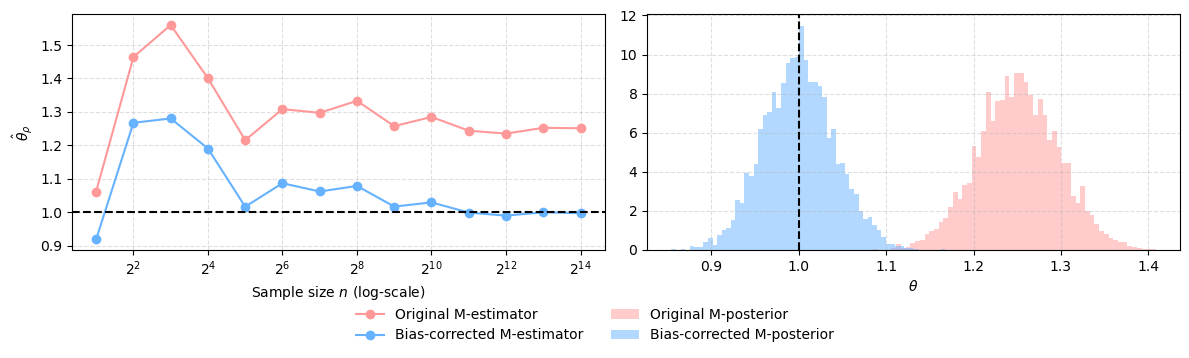}
  \caption{Comparison of original vs.\ bias–corrected M-estimators and M-posteriors. The left panel traces the M–estimator~\(\hat \theta_\loss\) as the sample size \(n\) increases, showing that under the uncorrected loss the estimator converges to a value well above the true rate \(\theta^*=1\), whereas the bias–corrected estimator rapidly stabilizes at the correct value. The right panel displays Metropolis–Hastings draws from the corresponding M-posteriors at \(n=1000\): the original M-posterior is concentrated around the same incorrect mode, while the bias–corrected M-posterior centers on \(\theta=1\). Taken together, these plots demonstrate that removing the asymptotic bias from the estimating equations restores posterior consistency in the Bayesian framework.
    }
  \label{fig::weighted-posterior-not-consistent}
\end{figure}

\end{example}
 
\section{Posterior Influence Function}
\label{sec::PIF}
The posterior influence function describes how sensitive the posterior distribution is to an infinitesimal contamination of the data distribution and is the Bayesian analogue to the classical influence function in robust estimation theory. In this section, we revisit the problem of deriving the influence function for generalized Bayesian posteriors. An early influence function derivation in the context of Bayesian estimators was given in \cite{hooker:vidyashankar2014} where the authors considered posterior mean estimators computed from disparity-based Bayesian posteriors and the first derivation of a \emph{posterior} influence function was obtained by  \cite{ghosh:basu2016} in the context of power divergence posteriors.   Recent extensions of this idea were considered in the work of \cite{matsubara:knoblauch:briol:oates2022,altamirano:briol:knoblauch2023} for Stein-discrepancy based posteriors. These derivations are particular instances of the general form of the posterior influence function for M-posteriors we give in this section. 
\subsection{Uniformly bounded-influence M-posteriors}
Consider the following slight generalization of the M-posterior \eqref{eq:M-posterior},
\[
\pi_n^\rho(\theta \mid G) \propto \pi(\theta)\exp\left(-n \E_{G} \left[\rho\left(X, \theta\right)\right]\right),
\]
where we retrieve the original definition by taking \( G = F_n\). We can now define a pointwise posterior influence function\footnote{We use the accronym PIF for posterior influence function, but we note that it has also been used in the context of robust inference to denote the \emph{power influence function} \cite{hampeletal1986,heritier:ronchetti1994}}, similar to the one introduced in \cite{ghosh:basu2016}.

\begin{definition}[Posterior influence function]
Consider the mixture \( F_{n,\epsilon,x_0} = (1 - \epsilon) F_n + \epsilon \delta_{x_0} \), where $\delta_{x_0}$ is a masspoint at $x_0$ for  \( x_0 \in \mathcal{X}\) and \( \epsilon \in [0, 1]\). The influence function of $\mposterior$ at a point $x_0$, for $\theta\in\Theta$ and the distribution $F_n$ is 
\begin{equation*}
    \PIF(x_0; \theta, \loss, F_n) 
    := \frac{\mathrm{d}}{\mathrm{d} \epsilon} \mposterior (\theta \mid F_{n,\epsilon,x_0}) \Big |_{\epsilon=0}.
\end{equation*}
\end{definition}
The posterior influence function captures the infinitesimal effect of adding a new point $x_0$ to the random sample used to compute $\mposterior$. Note that unlike the standard definition of the influence function (see \cite{hampeletal1986,huber:ronchetti2009}), which is defined as a directional derivative for a population quantity, our influence function only makes sense for finite sample posteriors. While the mixture distribution considered in the definition can be defined for a central population distribution, the posterior distribution is degenerate in the limit. Indeed, when $n$ grows large the posterior contracts around a normal distribution with a shrinking variance as demonstrated by our BvM result. This suggests that the limiting object should be a mass-point at the M-functional $T(F)=\theta^*$, which is not very interesting.

We call an M-posterior \( \mposterior (\cdot \mid F_n) \) \emph{uniformly B-robust} if \( \sup_{\theta \in \Theta} \sup_{x_0 \in \mathcal{X}} |\textrm{PIF}(x_0;\theta,\loss, F_n)| < \infty \). Note that \cite{matsubara:knoblauch:briol:oates2022} refers to uniformly B-robust posteriors as \emph{globally bias-robust} posteriors. We stick to the  B-robust terminology common in robust statistics \citep{hampeletal1986}. Since the posterior influence function depends on $\theta$, we seek uniform boundedness over all $\theta\in \Theta$.

We begin by stating the following technical lemma that provides an upper bound on the pointwise posterior influence function given that the score function is bounded:
\begin{lemma}
\label{lemma::upper-bound-on-PIF}
    Let \( \mposterior (\cdot \mid F_n) \) be an M-posterior corresponding to a loss function \(\loss\), such that the score function \( \score \) is bounded. 
    Let \( B := \sup_{x \in \mathcal{X}} \sup_{\theta \in \Theta} \left | \score(x,\theta) \right | \). 
    We then have the following upper bound on the posterior influence function:
    \[
        |\PIF(x_0; \,\theta, \rho, F_n)|
        \leq 2B n \mposterior(\theta \mid F_n) \left ( | \theta | + \int_\Theta \mposterior(\theta' \mid F_n) |\theta'| \, d\theta' \right ).
    \]
\end{lemma}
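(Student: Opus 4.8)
The plan is to differentiate the posterior in $\epsilon$ directly, read off a closed form for the PIF, and then bound it by exploiting that only \emph{differences} of the loss survive, which are controlled by the bounded score. Write $L_n(\theta) = \sum_{i=1}^n \rho(X_i,\theta) = n\,\E_{F_n}[\rho(X,\theta)]$ and let $g_\epsilon(\theta) = \pi(\theta)\exp\bigl(-(1-\epsilon)L_n(\theta) - \epsilon\, n\rho(x_0,\theta)\bigr)$ be the unnormalized density of $\mposterior(\cdot\mid F_{n,\epsilon,x_0})$, with normalizer $Z_\epsilon = \int_\Theta g_\epsilon(\theta')\,d\theta'$. Because the mixture enters the exponent linearly in $\epsilon$, one has $\partial_\epsilon \log g_\epsilon(\theta) = L_n(\theta) - n\rho(x_0,\theta) =: -h(\theta)$. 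Differentiating the ratio $g_\epsilon/Z_\epsilon$ via the quotient rule, interchanging derivative and integral (justified by dominated convergence, since $h$ is continuous and the integrals are finite near $\epsilon=0$), and evaluating at $\epsilon=0$ gives
\[
\PIF(x_0;\theta,\rho,F_n) = \mposterior(\theta\mid F_n)\Bigl(\textstyle\int_\Theta h(\theta')\,\mposterior(\theta'\mid F_n)\,d\theta' - h(\theta)\Bigr) = \mposterior(\theta\mid F_n)\int_\Theta \bigl(h(\theta')-h(\theta)\bigr)\,\mposterior(\theta'\mid F_n)\,d\theta'.
\]

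The main obstacle is that $h(\theta) = n\rho(x_0,\theta) - L_n(\theta)$ involves the loss directly, and for robust choices such as the Huber loss $\rho$ is unbounded, so one cannot bound $h$ pointwise. The observation that resolves this is that only the difference $h(\theta')-h(\theta)$ appears, and each loss difference can be written as a line integral of the score: $\rho(x_0,\theta') - \rho(x_0,\theta) = \int_0^1 \psi\bigl(x_0,\theta + t(\theta'-\theta)\bigr)^\top(\theta'-\theta)\,dt$, whose norm is at most $B\,|\theta'-\theta|$ by Cauchy--Schwarz and the definition of $B$. Applying the same estimate to each of the $n$ summands in $L_n(\theta')-L_n(\theta)$ and combining with the triangle inequality yields $|h(\theta')-h(\theta)| \le 2nB\,|\theta'-\theta|$.

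Finally I would bound $|\theta'-\theta| \le |\theta| + |\theta'|$, substitute into the integral representation of the PIF above, and use $\int_\Theta \mposterior(\theta'\mid F_n)\,d\theta' = 1$ to factor out the $|\theta|$ term, obtaining
\[
|\PIF(x_0;\theta,\rho,F_n)| \le 2Bn\,\mposterior(\theta\mid F_n)\Bigl(|\theta| + \int_\Theta \mposterior(\theta'\mid F_n)\,|\theta'|\,d\theta'\Bigr),
\]
which is exactly the claimed bound. The only steps requiring care are the interchange of differentiation and integration in the first display and the vector-valued fundamental-theorem-of-calculus step used to pass from the loss to the score; both are routine given the continuity of $\rho$ and the assumed boundedness of $\psi$, and it is precisely this passage from the possibly unbounded loss to the bounded score that is the conceptual heart of the argument.
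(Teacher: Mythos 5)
Your proof is correct and follows essentially the same route as the paper's: differentiate the posterior ratio in $\epsilon$ to obtain the exact integral representation of the PIF, then control the loss differences through the bounded score via a line-integral (fundamental theorem of calculus) argument. The only cosmetic difference is that the paper re-centers the loss at $\theta=0$ (working with $\closs(x,\theta)=\rho(x,\theta)-\rho(x,0)$ so that $|g(x_0,\theta)|\le 2B|\theta|$ pointwise), whereas you keep the raw loss and bound the pairwise difference $|h(\theta')-h(\theta)|\le 2nB|\theta'-\theta|$ before splitting with the triangle inequality; both yield the identical final bound.
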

While we do not need an exact expression of the posterior influence function to show B-robustness, we state it in the following remark. The derivation can be found in the proof of \Cref{lemma::upper-bound-on-PIF}. 
\begin{remark}
    Let \( \closs ( x, \theta) := \loss(x, \theta) - \loss(x, 0)\) be the re-centered loss and
    \(
        g(x,\theta) := \E_{F_n} [ \closs(X, \theta) ] - \closs(x, \theta).
    \)
    Then the posterior influence function can be written as
    \begin{equation}
    \label{eqn::exact-PIF}
        \PIF(x_0; \,\theta, \rho, F_n) = n \mposterior(\theta \mid F_n) \left ( g(x_0, \theta) - \int_\Theta \mposterior(\theta' \mid F_n) g(x_0, \theta') \, d\theta' \right ).
    \end{equation}
\end{remark}
We are now ready to state the main result of this section, which provides sufficient conditions on the prior and the loss function that guarantee uniform B-robustness of the M-posterior.
\begin{theorem}
\label{thm::global-bias-robust}
    Let \( \mposterior (\cdot \mid F_n) \) be an M-posterior corresponding to a loss function \(\loss\) that is bounded from below and such that the score function \( \score\) is bounded. 
    Furthermore, let \( \pi(\theta) \) be an upper-bounded prior over \( \Theta \) that is possibly improper, and assume one of the following:
    \begin{itemize}
        \item the prior \( \pi(\theta) \) has a finite first moment and is such that \( \sup_{\theta \in \Theta} \pi(\theta) | \theta | < \infty, \)
        \item or the loss function \(\loss\) is convex in \( \theta \) and coercive, meaning \( \lim_{| \theta |\rightarrow \infty} \rho(x, \theta) = \infty \).
    \end{itemize}
    Then the M-posterior \( \mposterior(\cdot \mid F_n) \) is uniformly B-robust.
\end{theorem}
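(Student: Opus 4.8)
The plan is to invoke Lemma~\ref{lemma::upper-bound-on-PIF} to reduce the claim to three finiteness statements about the posterior, and then verify each under the two alternative hypotheses. Since the upper bound furnished by Lemma~\ref{lemma::upper-bound-on-PIF} does not depend on the contamination point $x_0$, uniform B-robustness follows once we establish
\[
\sup_{\theta \in \Theta}\,\mposterior(\theta \mid F_n)\,|\theta| < \infty,
\qquad
\sup_{\theta \in \Theta}\,\mposterior(\theta \mid F_n) < \infty,
\qquad
\int_\Theta \mposterior(\theta' \mid F_n)\,|\theta'|\,d\theta' < \infty,
\]
because then $\mposterior(\theta\mid F_n)\big(|\theta| + \int \mposterior(\theta'\mid F_n)|\theta'|\,d\theta'\big)$ is bounded uniformly in $\theta$ (the first summand by the first bound, the second by the product of the latter two). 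Writing $L_n(\theta) := \sum_{i=1}^n \loss(X_i,\theta)$ and letting $Z_n$ be the normalizing constant, so that $\mposterior(\theta\mid F_n) = Z_n^{-1}\exp(-L_n(\theta))\pi(\theta)$, the middle bound is immediate: since $\loss$ is bounded below, say by $m$, we have $\exp(-L_n(\theta)) \le e^{-nm}$, and since $\pi$ is upper bounded, $\exp(-L_n(\theta))\pi(\theta)$ is uniformly bounded. One first checks $0 < Z_n < \infty$ so the posterior is well defined, which is where the integrability guaranteed by either hypothesis enters.

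Under the first hypothesis, the remaining two bounds are essentially inherited from the prior. Using $\exp(-L_n(\theta)) \le e^{-nm}$ again, we get $\mposterior(\theta\mid F_n)|\theta| \le Z_n^{-1}e^{-nm}\,\pi(\theta)|\theta|$, which is bounded by the assumption $\sup_\theta \pi(\theta)|\theta| < \infty$; integrating the same pointwise inequality and invoking the finite prior first moment gives the integral bound. Note that a finite first moment together with upper-boundedness of $\pi$ forces $\pi$ to be integrable, so $Z_n < \infty$.

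The substantive work is under the second hypothesis, where the prior need not have a finite first moment (and may be improper), so control must come from the loss. Here I would first record that a convex, coercive $L_n$ grows at least linearly: its sublevel set $S = \{\theta : L_n(\theta) \le L_n(\hat\theta)+1\}$, with $\hat\theta$ a minimizer, is closed, convex, and bounded (coercivity forces $L_n \to \infty$, so $S$ lies in a ball of some radius $R$), and the monotonicity of secant slopes of the convex map $t \mapsto L_n(\hat\theta + t u)$ yields $L_n(\theta) \ge L_n(\hat\theta) + R^{-1}\|\theta - \hat\theta\| - 1$ for all $\theta$, hence $L_n(\theta) \ge c\|\theta\| - d$ for constants $c>0$ and $d$. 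Combining this with $\pi \le M_\pi$ gives $\mposterior(\theta\mid F_n) \le Z_n^{-1} M_\pi e^{d} e^{-c\|\theta\|}$, so $\mposterior(\theta\mid F_n)|\theta|$ is bounded (exponential decay beats the linear factor) and $\int_\Theta \mposterior(\theta'\mid F_n)|\theta'|\,d\theta' < \infty$ (exponential decay is integrable against polynomial growth on $\R^p$); the same exponential bound also yields $Z_n < \infty$.

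The main obstacle is the linear-growth step in the convex case: bounded score alone only yields at most linear growth, whereas the supremum and integrability bounds require at least linear growth, which must be extracted from convexity together with coercivity (the two being reconciled by losses, such as Huber's, that grow exactly linearly). Additional care is needed to confirm that $Z_n$ is finite and positive for improper priors, which the linear lower bound on $L_n$ secures.
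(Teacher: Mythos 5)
Your proposal is correct and follows essentially the same route as the paper's proof: reduce to the bound of Lemma~\ref{lemma::upper-bound-on-PIF} and verify the three finiteness statements, using $\exp(-L_n(\theta))\le e^{-nm}$ together with the prior conditions in the first case, and a linear lower bound $L_n(\theta)\ge a|\theta|-b$ from convexity plus coercivity in the second. The only difference is that you actually justify that linear lower bound via the sublevel-set/secant-slope argument, whereas the paper asserts it without proof — a welcome addition, not a divergence.
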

In a nutshell, the above theorem says that a bounded score function ensures a bounded posterior influence function. In other words, infinitesimal perturbations of the data cannot significantly change the posterior distribution at any given point \(\theta\). The two conditions provide a good intuition about how one gets robustness in the Bayesian setting---there is a constant interplay between the loss function and the Bayesian prior. If we consider standard robust losses such as the Huber and check losses, which are both convex and coercive, we do not need to assume much on the prior to guarantee the boundedness of the posterior influence function. Moreover, the prior does not need to be proper, as long as it is upper bounded over its whole domain. On the other hand, without the convexity of the loss function, which is the case, for instance, for redescending losses like the Tukey loss, we require stronger conditions on the prior, mainly to guarantee that the M-posterior itself is well defined.

Our posterior influence function can be used to derive the influence function of functionals of the posteriors. We show how this can be done for posterior moments and quantiles in Section \ref{sec:IF_moments_quantiles}. We note that \cite{gustafson1996,gustafson2000} considered a notion of local sensitivity of posterior moments that resembles the influence function but where the sensitivity is measured with respect to the prior, not to the data. \cite{hooker:vidyashankar2014} introduced a notion of influence functions for posterior mean estimators that is slightly different from ours as they consider fixed contamination neighborhoods. \cite{ghosh:basu2016} and in particular \cite{matsubara:knoblauch:briol:oates2022,altamirano:briol:knoblauch2023} gave sufficient conditions that guarantee the posterior influence function is bounded for their estimators. Our results have the advantage of $(i)$ holding for general M-posteriors, $(ii)$ explicitly connecting the score function $\psi$ to the boundedness of the posterior influence function, as one would intuitively expect given the standard boundedness results for the frequentist M-estimator counterparts, and $(iii)$ highlighting the importance of the prior in the case of non-convex loss or equivalently in the case of  redescending score functions.

While \Cref{thm::global-bias-robust} gives sufficient conditions for obtaining  bounded posterior influence functions, we can also state the converse result. To be more precise, we show that the unboundedness of the score function \( \psi \) leads to non-robust M-posteriors.
\begin{proposition}
\label{prop::unbounded-score-implies-non-robust-posterior}
    Let \( \mposterior (\cdot \mid F_n) \) be an M-posterior corresponding to a loss function \(\loss(x, \theta)\) that is convex in \( \theta \) for every \( x \) and such that the score function \( \score(x, \theta) \) satisfies \(\lim_{x \rightarrow \pm \infty} \psi (x, \theta) = \pm \infty \) for all choices of \(\theta\). Assume \( \pi\) is not degenerate. Then the M-posterior is not uniformly B-robust.  
\end{proposition}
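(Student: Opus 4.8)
The plan is to prove the contrapositive of uniform B-robustness directly: I will show that $\sup_{\theta\in\Theta}\sup_{x_0\in\mathcal X}\lvert\PIF(x_0;\theta,\loss,F_n)\rvert=\infty$ by exhibiting two \emph{fixed} parameter values at which the pointwise influence function cannot both remain bounded as the contamination point $x_0\to+\infty$. My starting point is the exact expression for the influence function recorded in the Remark following \Cref{lemma::upper-bound-on-PIF}, namely
\[
\PIF(x_0;\theta,\loss,F_n)=n\,\mposterior(\theta\mid F_n)\Big(g(x_0,\theta)-\int_\Theta \mposterior(\theta'\mid F_n)\,g(x_0,\theta')\,d\theta'\Big),\qquad g(x,\theta)=\E_{F_n}[\closs(X,\theta)]-\closs(x,\theta).
\]
The key structural observation is that the only piece of this expression that depends on $x_0$ but \emph{not} on $\theta$ is the posterior-expectation term $\int_\Theta \mposterior(\theta'\mid F_n)\,g(x_0,\theta')\,d\theta'$, and I will eliminate it by differencing two evaluation points.

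Since $\pi$ is non-degenerate and $\mposterior(\cdot\mid F_n)\propto\pi(\cdot)\exp(-\sum_i\loss(X_i,\cdot))$ with the exponential weight strictly positive (as $\loss\ge 0$), the posterior is positive exactly where $\pi$ is, hence is itself non-degenerate; I may therefore fix $\theta_1<\theta_2$ with $\mposterior(\theta_1\mid F_n),\mposterior(\theta_2\mid F_n)>0$. Dividing the identity above by the fixed positive prefactor $n\,\mposterior(\theta_i\mid F_n)$ and subtracting the two resulting expressions, the common expectation term cancels and, using $\closs(x_0,\theta_2)-\closs(x_0,\theta_1)=\loss(x_0,\theta_2)-\loss(x_0,\theta_1)$, I obtain
\[
\frac{\PIF(x_0;\theta_2,\loss,F_n)}{n\,\mposterior(\theta_2\mid F_n)}-\frac{\PIF(x_0;\theta_1,\loss,F_n)}{n\,\mposterior(\theta_1\mid F_n)}=\E_{F_n}\!\big[\loss(X,\theta_2)-\loss(X,\theta_1)\big]-\big(\loss(x_0,\theta_2)-\loss(x_0,\theta_1)\big).
\]
The first term is a finite constant independent of $x_0$ (a sample average of finite loss differences), so the asymptotics in $x_0$ are governed entirely by $\loss(x_0,\theta_2)-\loss(x_0,\theta_1)=\int_{\theta_1}^{\theta_2}\score(x_0,s)\,ds$. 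It remains to show this integral diverges.

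By hypothesis $\score(x_0,s)\to+\infty$ as $x_0\to+\infty$ for every fixed $s$, and by convexity of $\loss(x_0,\cdot)$ the score $\score(x_0,\cdot)$ is non-decreasing in its second argument; hence, once $x_0$ is large enough that $\score(x_0,\theta_1)>0$, we have $\score(x_0,s)\ge\score(x_0,\theta_1)>0$ for all $s\in[\theta_1,\theta_2]$, so the integrands are eventually non-negative and Fatou's lemma yields $\liminf_{x_0\to+\infty}\int_{\theta_1}^{\theta_2}\score(x_0,s)\,ds\ge\int_{\theta_1}^{\theta_2}\liminf_{x_0\to+\infty}\score(x_0,s)\,ds=+\infty$. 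Therefore the displayed difference of normalized influence functions tends to $-\infty$; since both prefactors are fixed positive constants, at least one of $\lvert\PIF(x_0;\theta_1,\loss,F_n)\rvert$, $\lvert\PIF(x_0;\theta_2,\loss,F_n)\rvert$ is unbounded in $x_0$, which shows the M-posterior is not uniformly B-robust. (If the stated sign convention instead makes $\score(x_0,s)\to-\infty$ as $x_0\to+\infty$, the identical argument runs with $x_0\to-\infty$.)

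I expect the main obstacle to be the final analytic step, namely rigorously passing to the limit in $\int_{\theta_1}^{\theta_2}\score(x_0,s)\,ds$ when $\score$ is only assumed to diverge \emph{pointwise}; this is precisely where the convexity hypothesis is needed, as it upgrades pointwise divergence into an eventual uniform positive lower bound on $[\theta_1,\theta_2]$ that legitimizes Fatou. The conceptual crux, by contrast, is the differencing step that cancels $\int_\Theta\mposterior g$: a more naive single-point argument would instead lower-bound $\E_{\mposterior}[\loss(x_0,\cdot)]-\loss(x_0,\theta_*)\ge\score(x_0,\theta_*)\big(\E_{\mposterior}[\theta]-\theta_*\big)$ via the subgradient inequality, which would force me to assume a finite posterior first moment and to locate $\theta_*$ below the posterior mean. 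Differencing two fixed positive-density points avoids any such moment or location condition and uses only non-degeneracy of the prior.
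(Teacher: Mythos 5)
Your proof is correct, and it takes a genuinely different route from the paper's. Both arguments start from the exact expression \eqref{eqn::exact-PIF}, but the paper works at a \emph{single} point: it applies the convexity (gradient) inequality $\loss(x_0,\theta')\ge\loss(x_0,\theta)+\score(x_0,\theta)(\theta'-\theta)$ inside the posterior average to obtain $h(x_0,\theta)\ge\score(x_0,\theta)\bigl(\int\mposterior(\theta'\mid F_n)\,\theta'\,d\theta'-\theta\bigr)+C(\theta)$, then picks $\theta^*$ with $\pi(\theta^*)>0$ on the appropriate side of the posterior mean and sends $x_0$ to the corresponding infinity. That argument implicitly requires the posterior first moment to be finite --- precisely the condition you flag and deliberately avoid. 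Your differencing of the normalized PIF at two positive-density points cancels the posterior-average term outright and reduces everything to the divergence of $\loss(x_0,\theta_2)-\loss(x_0,\theta_1)=\int_{\theta_1}^{\theta_2}\score(x_0,s)\,ds$, which follows from the monotonicity of $\score(x_0,\cdot)$ (convexity used differently) together with the pointwise divergence hypothesis; in fact monotonicity gives the direct bound $\int_{\theta_1}^{\theta_2}\score(x_0,s)\,ds\ge(\theta_2-\theta_1)\,\score(x_0,\theta_1)$ once $\score(x_0,\theta_1)>0$, so Fatou is not even needed. The trade-off is mild: the paper's argument exhibits a specific $\theta^*$ at which the PIF blows up, while yours only shows that at least one of your two chosen points must; in exchange, yours needs no finite posterior mean and no comparison of $\theta^*$ to that mean, only non-degeneracy of the prior. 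Your parenthetical on the sign convention is also well taken --- the paper applies this proposition to the Gaussian score $\score(x,\theta)=\theta-x$, for which the literal limits in the hypothesis are reversed, so what is really assumed is divergence with a consistent sign on each tail, and your argument accommodates either orientation.
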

\subsection{Examples}
We begin this section by showing that the standard Gaussian model with a Gaussian prior on the mean parameter does not have a bounded posterior influence function, and so it is not robust in this sense. Our second example shows how working with Huber's loss fixes this issue. Finally, we derive the posterior influence function of the reweighted posteriors, which confirms that these posteriors can indeed be robust to outliers with natural choices of the prior on the weights.

\begin{example}[Gaussian likelihood]
\label{example::influence-function-gaussian}
Consider the Gaussian location likelihood model, i.e.\ let
\(
    \rho(x, \theta) = \tfrac{1}{2}(x-\theta )^2,
\)
for some non-degenerate prior \( \pi(\theta) \).  The corresponding score function \( \score(x, \theta) = \theta - x \) satisfies the assumptions of \Cref{prop::unbounded-score-implies-non-robust-posterior}.  Consequently, the posterior influence function under the Gaussian likelihood is unbounded.

This negative result is very intuitive since in the frequentist setting, using the squared loss leads to the sample mean as the estimator, and the influence function of the mean is unbounded -- one extreme outlier can move the mean by an arbitrarily large amount. The Bayesian analogue with a Gaussian likelihood and squared loss inherits the same problem since the posterior distribution is Gaussian with mean proportional to the sample mean. Thus, both the point estimator in the frequentist case and the full posterior in the Bayesian case fail to control the effect of outliers. 

\end{example}

The next example shows that, as in the frequentist setting, to mitigate the unbounded influence exhibited by the Gaussian likelihood posterior, one can replace the pure quadratic loss with a robust loss, like Huber loss.

\begin{example}[Huber loss]
Consider the M-posterior with Huber loss introduced in \Cref{subsection::huber-location-posterior},
for some \(c>0\), and for an upper-bounded prior \( \pi(\theta) \). 
The corresponding score function is
\[
  \score_c(x,\theta) = \frac{\partial}{\partial\theta}\loss_c(x,\theta)
  =
  \begin{cases}
    x-\theta,  & \quad |x-\theta|\le c,\\
    c\,\mathrm{sign}(x-\theta), & \quad |x-\theta |>c.
  \end{cases}
\]
Hence the score is bounded \(|\score_c(x,\theta)|\leq c\) for all \(x,\theta\). Furthermore, the loss function is convex and coercive; hence, it satisfies the second case of \Cref{thm::global-bias-robust}, which in combination with an upper-bounded prior, shows that the M-posterior \(\pi_n^{\loss_c} (\cdot \mid F_n)\) is uniformly B-robust. Clearly, this conclusion remains true for any convex loss with a bounded derivative.
\end{example}
\begin{example}[Reweighted posterior] 
\label{example::bias-robust-reweighted-posterior}
We continue with examining the reweighted posteriors from \cite{wang:kucukelbir:blei2017} introduced in \Cref{subsection::Bayesian-Data-Reweighting}, showing that this reweighting procedure does indeed robustify the posteriors in the sense of providing a bounded posterior influence function.  To that end, we again consider the setup from \Cref{example::influence-function-gaussian}, which we showed is not robust by default, but this time we also introduce the weights drawn from a Gamma prior. 
More precisely, suppose that \( X_i \mid \theta \sim N(\theta, 1) \) and let the prior on \( \theta \) be \( \pi(\theta) = N(\mu_0,\sigma_0^2) \). Furthermore, let the prior on the weights be \( \pi_\alpha(\alpha) = \Gamma(\kappa, \lambda) \).
A direct calculation (see \Cref{lemma::calculation-reweighted-posterior-normal}) shows that the corresponding loss for this M-posterior is
\[
    \loss(x, \theta) = \kappa \left [ \log \left ( \lambda + \frac{(x-\theta)^2}{2} + \frac{1}{2} \log(2\pi) \right ) - \log \lambda \right ],
\]
and score function
\[
    \psi(x, \theta) = \frac{\kappa(x-\theta)}{\lambda + \frac{(x-\theta )^2}{2} + \frac{1}{2} \log(2\pi)}.
\]
Note that this resulting loss is actually redescending, since \( | \psi | \rightarrow 0 \) as \(|x - \theta | \rightarrow \infty \). Now, we have that \( \loss \geq 0\) and that the score function \( \psi \) is uniformly bounded. Furthermore, the prior satisfies the requirements of the first case of \Cref{thm::global-bias-robust}; hence, we conclude that the reweighted posterior is uniformly B-robust. 
\end{example}

Another way to interpret the result of the previous example is to note that the gamma reweighting of the Gaussian likelihood turns it into a Cauchy-type likelihood tempered by the parameter $\kappa$. In the case $\kappa=1$ the M-posterior behaves exactly like a Cauchy model, which is well known to be robust \citep{clarke1983}.

\subsection{Influence function of posterior moments and quantiles}
\label{sec:IF_moments_quantiles}
We now turn to problem of deriving the influence function of functionals of the posterior distribution. We focus our attention on perhaps the most natural distribution functionals: moments and quantiles. 

\subsubsection{Posterior moments.}

We consider the $k$th-moment posterior functional 
\[
    T_k(F_n) := \int_\Theta \theta^k \mposterior(\theta \mid F_n) \, d\theta.
\]
 We are interested in uniformly bounding the influence function,
\begin{equation*}
    \textrm{IF}(x_0; T_k, F_n) = \frac{\partial}{\partial \epsilon} T_k ( F_{n,\epsilon,x_0}) \Big |_{\epsilon=0},
\end{equation*}
over all \( x_0 \in \mathcal{X}\). To that end, we see that
\begin{equation}
\label{eqn::influence-function-of-the-posterior-moment}
     \textrm{IF}(x_0; T_k, F_n) =  \frac{\partial}{\partial \epsilon} T_k ( F_{n,\epsilon,x_0}) \Big |_{\epsilon=0} =
    \int_\Theta  \frac{\partial}{\partial \epsilon} \theta^k \mposterior(\theta \mid F_{n,\epsilon,x_0}) \Big |_{\epsilon=0} \, d\theta =
    \int_\Theta \theta^k \textrm{PIF}(x_0; \theta,\rho, F_n) \, d\theta.
\end{equation}
Interestingly, this simple calculation reveals that the boundedness of the \( \textrm{IF}(x_0; T_k,F_n) \) does not immediately follow from the boundedness of the \( \textrm{PIF}(x_0; \theta, \rho, F_n) \), not even when taking \( k = 1\), i.e.\ the posterior mean.

It is insightful to contrast the influence functions of the posterior moments with the standard $k$-th moment functionals $\mu_k(\P):=\int_{\mathbb{R}} x^kd\P(x)$. The linearity of these functionals makes it straightforward to compute the influence function
$\textrm{IF}(x_0; \mu_k, F)=x_0^k-\mu_k(F).$
It follows that the standard moment functionals are never robust in the sense of the influence function. This is to be contrasted with the posterior moments which can inherit the robustness of the posterior distribution.

\subsubsection{Posterior quantiles.}

We consider the posterior (left) $\tau$-quantile functional 
\begin{equation}
\label{eqn::definiton-posterior-quantiles}
    T_\tau(F_n)
    :=
    \inf \Bigl \{ \theta : \int_{-\infty}^{\theta} \mposterior (\theta' \mid F_n)\, d\theta' \geq \tau\Bigr\}.
\end{equation}
In order to derive the influence function of $T_\tau(F_n)$ we introduce the functional
\[
    S(\theta, G)
    =
    \int_{-\infty}^{\theta}\mposterior(\theta'\mid G)\,d \theta'
    -
    \tau,
\]
so that \( S \bigl(T_\tau(F_n),\,F_n\bigr) = 0\). This last equation allows us to obtain the desired influence function as we can now invoke the implicit function theorem to get 
\[
    0
    =
    \frac{\partial}{\partial\epsilon}
     S\!\bigl(T_\tau(F_{n,\epsilon,x_0}),F_{n,\epsilon,x_0}\bigr)\Big|_{\epsilon=0}
    \!
    =
    \frac{\partial}{\partial\theta}S(\theta,F_n)\Big|_{\theta=T_\tau(F_n)}
     \frac{\partial}{\partial\epsilon}T_\tau(F_{n,\epsilon,x_0})\Big|_{\epsilon=0} \!
    + \! \frac{\partial}{\partial\epsilon}S\!\bigl(T_\tau(F_n),F_{n,\epsilon,x_0}\bigr)\Big|_{\epsilon=0}.
\]
Since
\[
    \frac{\partial}{\partial \theta} S\bigl(\theta,G\bigr)\Big |_{\theta = T_\tau(G)}
    =
    \mposterior(\theta\mid G)\Big |_{\theta = T_\tau(G)}  =
    \mposterior(T_\tau(G)\mid G),
\]
and
\[
    \frac{\partial}{\partial \epsilon} S\bigl(T_\tau(F_n),F_{n,\epsilon,x_0} \bigr) \Big |_{\epsilon = 0}
    =
    \int_{-\infty}^{\theta}\mposterior(\theta'\mid F_{n,\epsilon,x_0})  \Big |_{\epsilon = 0} \,d \theta'
    =
    \int_{-\infty}^{\theta}
    \PIF\bigl(x_0; \theta', \rho,F_n\bigr)\,d\theta',
\]
we obtain the influence function of the \(\tau\)-quantile,
\begin{equation}
\label{eqn::influence-function-of-posterior-quantile}
    \mathrm{IF}\bigl(x_0;\,T_\tau,F_n\bigr)
    =
    \frac{\partial}{\partial \epsilon} T_\tau(F_{n,\epsilon,x_0}) \Big \lvert_{\epsilon=0}
    =
    -
    \frac{\displaystyle
      \int_{-\infty}^{T_\tau(F_n)}
        \PIF\bigl(x_0;\, \theta', \rho,F_n\bigr)\,d\theta'
    }{
      \displaystyle \mposterior\bigl(T_\tau(F_n)\mid F_n\bigr)
    }.
\end{equation}

Once again, we can see that the boundedness of the posterior influence function is not enough to guarantee the boundedness of the influence function of the posterior quantiles. At the same time, we can see that to achieve the uniform bound on \(\mathrm{IF}\bigl(x_0;\,T_\tau,F_n\bigr)\), we require the integrability of the posterior influence function on \( (-\infty, T_\tau(F_n) )\).

It is again insightful to compare the influence functions of the posterior quantiles with those of standard quantiles $q_\tau(F):=
    \inf  \{ x : F(x)\geq \tau\}$. Assuming that $X\sim F$ has a non-zero density $f$ at $q_\tau(F)$, one can show that 
\begin{equation*}
   \textrm{IF}(x_0; q_\tau, F)=\frac{\tau-\mathbf{1}\{x_0\leq q_\tau(F)\}}{f(q_\tau(F))}. 
\end{equation*}
See \cite[Ch.\ 3.3.1.]{huber:ronchetti2009}. We conclude that the influence function of the standard quantile functional is always bounded provided that there exists a non-zero density at the population quantile. This is in sharp contrast with the posterior quantiles, which can easily be shown to not be robust for suitable non-robust posteriors as we illustrate in the discussion of next subsection. The intuition being that non-robust posterior distributions should not be expected to give robust posterior quantiles.

\subsubsection{Bounded-influence posterior moments and quantiles.}

While the calculations above show that there is no obvious connection between the boundedness of the influence function of the posterior mean and the boundedness of the posterior influence function, for example, the following result states sufficient conditions that guarantee that a bounded posterior influence function implies bounded-influence posterior moments and quantiles.  
\begin{proposition}
\label{prop::connections-to-influence-functions-of-posterior-moments-and-quantiles}
    Let \( \mposterior (\cdot \mid F_n) \) be an M-posterior corresponding to a loss function \(\loss\) that is positive and such that the score function \( \psi\) is bounded. Furthermore, let \( \pi(\theta) \) be a prior over \( \Theta \).
    \begin{enumerate}
        \item For any \(k \geq 1 \), if the prior \( \pi \) has a finite \( (k+1) \)-th moment, then \(k\)-th moment of the posterior  \( \int_\Theta \theta^k \mposterior(\theta \mid F_n) \, d\theta \) has a bounded influence function.
        \item If the prior \( \pi \) has a finite first moment, then the posterior quantiles have a bounded influence function.
    \end{enumerate}
\end{proposition}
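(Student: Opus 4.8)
The plan is to reduce both statements to the uniform-in-$x_0$ bound on the posterior influence function from \Cref{lemma::upper-bound-on-PIF}, combined with the explicit representations \eqref{eqn::influence-function-of-the-posterior-moment} and \eqref{eqn::influence-function-of-posterior-quantile} of the two functional influence functions. The one technical ingredient needed throughout is a domination of the posterior by the prior: since $\loss \geq 0$, the Gibbs factor satisfies $\exp(-n\E_{F_n}[\loss(X,\theta)]) \leq 1$, so, writing $Z := \int_\Theta \exp(-n\E_{F_n}[\loss(X,\theta')])\pi(\theta')\,d\theta' \in (0,\infty)$, we have $\mposterior(\theta \mid F_n) \leq \pi(\theta)/Z$ for every $\theta$. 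Integrating $|\theta|^m$ against this inequality gives $\E_{\mposterior}[|\theta|^m] \leq Z^{-1}\E_\pi[|\theta|^m]$, so finiteness of the $m$-th prior moment transfers to finiteness of the $m$-th posterior moment.

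For the first claim I would start from \eqref{eqn::influence-function-of-the-posterior-moment}, move the absolute value inside the integral, and insert the bound of \Cref{lemma::upper-bound-on-PIF}. Setting $C := \int_\Theta \mposterior(\theta' \mid F_n)|\theta'|\,d\theta' = \E_{\mposterior}[|\theta|]$, this produces
\[
    |\IF(x_0; T_k, F_n)| \leq 2Bn\left(\E_{\mposterior}[|\theta|^{k+1}] + C\,\E_{\mposterior}[|\theta|^k]\right),
\]
where $B = \sup_{x}\sup_\theta|\score(x,\theta)|$. This bound is already uniform in $x_0$, since $B$ is a supremum over all $x$. Because $k \geq 1$, the finite $(k+1)$-th prior moment hypothesis together with the domination above make $\E_{\mposterior}[|\theta|^{k+1}]$, $\E_{\mposterior}[|\theta|^k]$, and $C = \E_{\mposterior}[|\theta|]$ all finite, so the right-hand side is finite, yielding a bounded influence function for the posterior $k$-th moment $T_k$.

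For the second claim I would begin from \eqref{eqn::influence-function-of-posterior-quantile}. The denominator $\mposterior(T_\tau(F_n)\mid F_n)$ is a fixed positive constant independent of $x_0$, so it is enough to bound the numerator. Enlarging the integration range from $(-\infty, T_\tau(F_n))$ to all of $\Theta$ and applying \Cref{lemma::upper-bound-on-PIF} gives
\[
    \left|\int_{-\infty}^{T_\tau(F_n)}\PIF(x_0; \theta', \loss, F_n)\,d\theta'\right| \leq 2Bn\left(\E_{\mposterior}[|\theta|] + C\right) = 4BnC,
\]
which is finite as soon as the prior has a finite first moment. Dividing by the positive denominator then yields a finite bound on $|\IF(x_0; T_\tau, F_n)|$ that does not depend on $x_0$.

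Both arguments are short once \Cref{lemma::upper-bound-on-PIF} is available; the only delicate point is the integrability of the relevant posterior moments, which I expect to be the main (though mild) obstacle. It is handled completely by the prior-domination inequality, which is precisely where the hypotheses $\loss \geq 0$ and the finite prior moments enter. A minor point to confirm is that \eqref{eqn::influence-function-of-posterior-quantile} is well defined, i.e.\ that $\mposterior(T_\tau(F_n)\mid F_n) > 0$; this is already implicit in the derivation of \eqref{eqn::influence-function-of-posterior-quantile} via the implicit function theorem.
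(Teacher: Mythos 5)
Your proposal is correct and follows essentially the same route as the paper: bound the posterior (and its moments) by the prior via the nonnegativity of $\loss$, then feed the uniform bound of \Cref{lemma::upper-bound-on-PIF} into the representations \eqref{eqn::influence-function-of-the-posterior-moment} and \eqref{eqn::influence-function-of-posterior-quantile}, enlarging the integration range for the quantile case. The only step the paper makes explicit that you take for granted is the justification of differentiating under the integral sign in \eqref{eqn::influence-function-of-the-posterior-moment}, which the paper handles by exhibiting a dominating function valid uniformly for $\epsilon$ in a neighborhood of $0$ (via a lower bound on the contaminated normalizing constant $Z_\epsilon$).
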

The conditions in \Cref{prop::connections-to-influence-functions-of-posterior-moments-and-quantiles} are similar to those in \Cref{thm::global-bias-robust},  but this time requiring slightly stronger  conditions on the prior. Namely, we require \( (k+1) \) finite prior moments to show the boundedness of the influence function of the \(k\)-th posterior moment. At the same time, a finite first moment of the prior guarantees the bounded influence function of all posterior quantiles. 

\subsection{On the robustness of reweighted posteriors}
We revisit the reweighted posterior setting from Example~\ref{example::bias-robust-reweighted-posterior} in more generality. We will rigorously expand the result first mentioned in Theorem 2 in \cite{wang:kucukelbir:blei2017}, which states that the posterior mean of the reweighted posterior exhibits a bounded influence function under appropriate regularity conditions. 
\begin{proposition}
\label{prop::bounded-PIF-of-reweighted-posteriors}
    Let \( X^n = (X_1, \ldots, X_n)\) be an i.i.d.\ sample from the model
   $f(x \mid \theta) = \exp (-g(x, \theta)),$
    where a positive function \(g(x, \theta)\) is such that \( (x, \theta) \mapsto \log  [ g(x, \theta)  ]\) is \(L\)-Lipschitz in \( \theta\) for all \(X\).
    Furthermore, let the prior on the weights \( \pi_\alpha (\alpha)  \) be \( \Gamma(k, \lambda) \) and let \( \pi(\theta) \) be an upper bounded prior over \( \Theta \) with a finite first moment such that \( \sup_{\theta \in \Theta} \pi(\theta) | \theta | < \infty \). Then the reweighted posterior \( \pi_\alpha(\theta \mid F_n) \) defined in \eqref{def::alpha-posterior} is uniformly B-robust.
\end{proposition}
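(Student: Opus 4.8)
The plan is to reduce the statement to an application of \Cref{thm::global-bias-robust}, specifically its first case, by computing explicitly the loss and score that the Gamma reweighting induces and then checking that the score is uniformly bounded. Recall from \eqref{def::alpha-posterior} that the reweighted posterior is an M-posterior whose loss is $\rho(x,\theta)=-\log\int_0^\infty \pi_\alpha(\alpha)\,f(x\mid\theta)^\alpha\,d\alpha$, so the task is to identify this loss and verify the hypotheses of that theorem.

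First I would carry out the marginal weight integral. Since $f(x\mid\theta)^\alpha=e^{-\alpha g(x,\theta)}$ and $\pi_\alpha$ is the $\Gamma(k,\lambda)$ density, this is a standard Gamma integral:
\[
\int_0^\infty \pi_\alpha(\alpha)\,f(x\mid\theta)^\alpha\,d\alpha
=\frac{\lambda^k}{\Gamma(k)}\int_0^\infty \alpha^{k-1}e^{-(\lambda+g(x,\theta))\alpha}\,d\alpha
=\frac{\lambda^k}{(\lambda+g(x,\theta))^k}.
\]
Taking the negative logarithm yields the loss and hence the score,
\[
\rho(x,\theta)=k\bigl[\log(\lambda+g(x,\theta))-\log\lambda\bigr],
\qquad
\psi(x,\theta)=\frac{\partial}{\partial\theta}\rho(x,\theta)=\frac{k\,\partial_\theta g(x,\theta)}{\lambda+g(x,\theta)}.
\]
Boundedness of the loss from below is then immediate: because $g>0$ we have $\lambda+g(x,\theta)>\lambda$, so $\rho\geq 0$.

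The crux is boundedness of the score, and this is precisely where the Lipschitz hypothesis is used. The assumption that $\theta\mapsto\log g(x,\theta)$ is $L$-Lipschitz gives $|\partial_\theta g(x,\theta)|\leq L\,g(x,\theta)$, so
\[
|\psi(x,\theta)|=k\,\frac{|\partial_\theta g(x,\theta)|}{\lambda+g(x,\theta)}
\leq k\,\frac{L\,g(x,\theta)}{\lambda+g(x,\theta)}\leq kL,
\]
uniformly in $(x,\theta)$, since $g/(\lambda+g)<1$. With the score bounded by $kL$, the loss bounded below, and the prior conditions in the first bullet of \Cref{thm::global-bias-robust}---upper boundedness, a finite first moment, and $\sup_{\theta}\pi(\theta)|\theta|<\infty$---being exactly what the proposition assumes, all hypotheses of the first case of \Cref{thm::global-bias-robust} hold, and uniform B-robustness of $\pi_\alpha(\theta\mid F_n)$ follows. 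The only mildly delicate point is the meaning of $\partial_\theta g$ when $g$ is merely Lipschitz rather than differentiable; I would handle this by invoking the bound $|\partial_\theta g|\leq L g$ as a direct consequence of the Lipschitz condition, which is all the score estimate actually requires, so no genuine regularity obstacle arises.
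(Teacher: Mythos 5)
Your proposal is correct and follows essentially the same route as the paper's own proof: compute the Gamma marginal to identify the induced loss $\rho(x,\theta)=k[\log(\lambda+g(x,\theta))-\log\lambda]$, bound the score by $kL$ using $|\partial_\theta g|\leq L\,g$ from the Lipschitz hypothesis together with $g/(\lambda+g)<1$, and then invoke the first case of \Cref{thm::global-bias-robust}. Your explicit remarks that $\rho\geq 0$ (lower-boundedness of the loss) and that the Lipschitz condition suffices even without classical differentiability are small but welcome additions the paper leaves implicit.
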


\Cref{prop::bounded-PIF-of-reweighted-posteriors} explains the observed robustness properties of reweighted posteriors introduced in \cite{wang:kucukelbir:blei2017}, but also imposes conditions on the working model. These conditions ensure that the reweighting procedure leads to a bounded posterior influence function.  The following counterexample shows that these conditions are necessary. 
Consider the Gumbel  likelihood model 
\[
f(x \mid \theta) = \exp \bigl ( - \exp \bigl ((x - \theta) ^2 \bigr) \bigr).
\]
This amounts to choosing the  function \(g \):
\(
g(x, \theta) = \exp \bigl ((x - \theta) ^2 \bigr).
\)
The absolute value of the score function of the corresponding reweighted posterior with \( \Gamma(\kappa, \lambda) \) prior on the weights will equal
\[
\left | \psi(x, \theta) \right | = \frac{2\kappa |x- \theta| \exp \bigl ((x - \theta) ^2 \bigr) }{\lambda + \exp \bigl ((x - \theta) ^2 \bigr)},
\]
with
\(
\left | \psi(x,\theta) \right | \rightarrow \infty,
\)
as \(| x - \theta | \rightarrow \infty\). Hence, this reweighted model will not exhibit a bounded score function, and \Cref{prop::unbounded-score-implies-non-robust-posterior} shows that the corresponding M-posterior will not have a bounded influence function.

\bigskip

\section{Posterior Breakdown point}
\label{section::posterior-BP}
In this section we extend the notion of finite sample breakdown point described in Section \ref{sec:robust_statistics} to the  Bayesian framework by introducing a natural definition of posterior breakdown. We will calculate the breakdown point of location M-posteriors defined by convex and non-convex losses, highlighting the importance of the loss and the prior. We connect our posterior breakdown point results to the breakdown point of the posterior mean and posterior quantiles. Contrary to their sample analogues, the posterior mean and quantiles will have a high breakdown point if the posterior breakdown is high, but could also have a breakdown point of $1/n$ if the posterior breakdown point is $1/n$. 
\subsection{Posterior Breakdown Point}
We use the Wasserstein distance on the space of probabilities over \( \Theta \) to define the breakdown point of the posterior distribution of M-posteriors evaluated at a dataset $X^n$.
\begin{definition}[Posterior breakdown point]
    For a given sample $X^n$ and prior distribution \( \pi \), the breakdown point of an M-posterior \( \mposterior(\cdot \mid F_n) \), is defined as
    \[
        \varepsilon^*_{W_2}(\mposterior, X^n) := 
        \min \biggl\{\frac{m}{n} : \sup_{F_{(n,m)} \,\in \, \mathcal{F}_{(n,m)}}
        W_2 \bigl(\mposterior(\cdot\mid F_{(n,m)}), \, \mposterior(\cdot\mid F_n)\bigr) = \infty \biggr\},
    \]
    where
    \( 
        \mathcal{F}_{(n,m)}=
        \{G \in \mathcal{F}_n:\,\sup_{x \in \R }|G (x)- F_n(x)| \leq \frac{m}{n} \}
    \)
    and we write $\mathcal{F}_n$ for the set of all distributions on $\mathcal{X}$ that can arise as empirical distributions of $n$ points in $\mathcal{X}$.  
\end{definition}

Contrasting our definition to the standard breakdown point, we replace the point estimator $T(X^n)$ with the M-posterior distribution $\mposterior(\cdot\mid F_n)$ and measure its stability using the $2$-Wasserstein distance between probability measures on $\Theta$. 
The contamination class $\mathcal{F}_{(n,m)}$ plays the same role as the set of contaminated samples in the classical definition: it contains all empirical distributions that differ from the observed empirical distribution $F_n$ in at most $m$ out of $n$ support points. 
The posterior breakdown point $\varepsilon^*_{W_2}(\pi_n^\rho,X^n)$ is then the smallest contamination fraction $m/n$  such that there exists a contaminated empirical distribution in $\mathcal{F}_{n,m}$ that sends the posterior arbitrarily far (in the $W_2$ sense) from the posterior based on the original data. While the choice of the Wasserstein distance  is somehow arbitrary, it is also a natural metric for probability measures. Furthermore, it allows us to still think about the breakdown as the fraction of data points that makes a distance go to infinity. This would not be the case if we worked with the total variation distance or the Prohorov distance,  which can be at most 1 by construction. Nonetheless, we will see in an example below that working with alternative distances and notions of breakdown point can lead to the same quantitative conclusions.

We proceed by presenting a technical lemma that provides upper and lower bounds for the 2-Wasserstein distance, expressed in terms of the means and variances of the measures. This will allow us to reduce the problem of finding the posterior breakdown point to that of  controlling the first two posterior moments. 
\begin{lemma}
\label{lemma::bounds-on-W2}
    Let $P,Q$ be probability measures on $\mathbb{R}$ with finite second moments. Denote
    \( \mu_P:=\mathbb{E}_P[X] \), \( \mu_Q:=\mathbb{E}_Q[Y] \), and \( \sigma_P^2:=\Var_P(X) \), \( \sigma_Q^2:=\Var_Q(Y) \).
    Then
    \[
    (\mu_P-\mu_Q)^2 \leq W_2^2(P,Q) \le (\mu_P-\mu_Q)^2 + \sigma_P^2 + \sigma_Q^2.
    \]
\end{lemma}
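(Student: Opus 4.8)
The plan is to work directly from the Kantorovich formulation of the squared Wasserstein distance,
\[
W_2^2(P,Q) = \inf_{\pi \in \Pi(P,Q)} \int_{\mathbb{R}^2} (x-y)^2 \, d\pi(x,y),
\]
where $\Pi(P,Q)$ is the set of couplings of $P$ and $Q$, i.e.\ probability measures on $\mathbb{R}^2$ with first marginal $P$ and second marginal $Q$. Both inequalities follow by exploiting this variational characterization: the lower bound holds for \emph{every} coupling, while the upper bound is obtained by evaluating the objective at one cleverly chosen coupling.

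For the lower bound, I would fix an arbitrary coupling $\pi \in \Pi(P,Q)$ and apply Jensen's inequality to the convex map $t \mapsto t^2$. Since the marginals of $\pi$ are $P$ and $Q$, we have $\int (x-y)\,d\pi = \mu_P - \mu_Q$, and therefore
\[
\int_{\mathbb{R}^2} (x-y)^2 \, d\pi(x,y) \;\geq\; \Bigl( \int_{\mathbb{R}^2} (x-y)\, d\pi(x,y) \Bigr)^2 \;=\; (\mu_P - \mu_Q)^2 .
\]
Taking the infimum over $\pi$ yields $W_2^2(P,Q) \geq (\mu_P-\mu_Q)^2$.

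For the upper bound, the key idea is to test the Kantorovich objective against the \emph{independent} coupling $\pi = P \otimes Q$; since $W_2^2$ is an infimum, any single coupling provides an upper bound. Letting $X \sim P$ and $Y \sim Q$ be independent, I would expand
\[
\E\bigl[(X-Y)^2\bigr] = \E[X^2] - 2\,\E[X]\,\E[Y] + \E[Y^2]
= (\sigma_P^2 + \mu_P^2) - 2\mu_P\mu_Q + (\sigma_Q^2 + \mu_Q^2),
\]
using independence in the cross term and the identity $\E[X^2] = \Var_P(X) + \mu_P^2$ (and likewise for $Y$). Regrouping gives $\sigma_P^2 + \sigma_Q^2 + (\mu_P - \mu_Q)^2$, so $W_2^2(P,Q) \leq (\mu_P - \mu_Q)^2 + \sigma_P^2 + \sigma_Q^2$. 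The finite-second-moment hypothesis guarantees all these expectations are finite, and in particular that $W_2(P,Q)$ is well defined.

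There is no substantial obstacle here; the only point requiring a deliberate choice is the upper bound, where picking the product coupling (rather than the optimal comonotone coupling) is what makes the variance terms appear additively and reproduces the stated bound with minimal computation.
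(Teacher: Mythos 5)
Your proposal is correct and follows essentially the same route as the paper's proof: Jensen's inequality applied to an arbitrary coupling for the lower bound, and evaluation at the independent coupling $P\otimes Q$ for the upper bound. The only cosmetic difference is that you expand $\E[(X-Y)^2]$ term by term where the paper writes it as $\Var(X-Y)+(\mu_P-\mu_Q)^2$; these are the same computation.
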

As a preliminary example, we demonstrate that the standard Gaussian posterior exhibits the lowest possible breakdown point of
\( 1/n \). This shows that by changing just one point in the sample, one can send the new posterior arbitrarily far from the original one. This is  analogous to the breakdown point of $1/n$ for the sample mean, which corresponds to the  maximum likelihood estimator of the location parameter for the Gaussian model in the frequentist setting.
\begin{example}
\label{example::BP-gaussian}
    Suppose \( X_1, \ldots, X_n  \overset{i.i.d.}{\sim} N(\theta, 1)\) and let \(  \pi(\theta) = N(0, 1) \).
    Furthermore, let \( \pi_n(\cdot \mid F_n)\) be the standard posterior. From conjugacy, we have
    \[
        \theta \mid X^n \sim N \left (\frac{1}{n+1}\sum_{i = 1}^n X_i, \frac{1}{n+1} \right ).
    \]
    Let \( \P_{(n,1)}\) be the empirical distribution of the contaminated sample \( X^{(n,1)} = (X_1', X_2, \ldots, X_n)\).
    By the lower bound in \Cref{lemma::bounds-on-W2}, we obtain
    \[
        \sup_{F_{(n,1)}} W_2^2 \left (\pi_n(\cdot \mid F_{(n,1)}), \, \pi_n (\cdot \mid F_n ) \right) \geq \sup_{X_1' \in \mathbb{R}} \left ( \frac{X_1' - X_1}{n+1} \right )^2 = \infty.
    \]
    By the definition of the posterior breakdown point, we conclude that \( \varepsilon^*_{W_2} \left (\pi_n, X^n \right ) = \frac{1}{n} \).
     Furthermore, it is easy to see that 
    $$\sup_{F_{(n,1)}} d_{\mathrm{TV}} \left (\pi_n(\cdot \mid F_{(n,1)}), \, \pi_n (\cdot \mid F_n ) \right) \geq 2\sup_{X_1' \in \mathbb{R}}\Phi\left(\frac{(n+1)|X_1'-X_1|}{2 n}\right)-1= 1,$$
    where $\Phi$ denotes the CDF of a standard normal random variable.
    So if we were to define the breakdown point as the number of contaminated points that make the total variation distance equal $1$, we arrive at the same conclusion as with the Wasserstein distance since \( \varepsilon^*_{d_{\mathrm{TV}}} (\pi_n, X^n ) = \frac{1}{n} \).
\end{example}

We proceed to examine the posterior breakdown point in the context of general location  M-posteriors. Our results generalize the ones obtained in \cite{donoho:huber1983} for location M-estimators. We will show that the posterior breakdown point is determined jointly by the selected robust loss function and the prior distribution. Interestingly, our analysis relies on extending the arguments used by \cite{huber1984} in the derivation of the breakdown point of the class of P-estimators or Pitman-type estimators introduced in \cite{johns1979}. While this class of estimators is rather exotic, they are intuitively closely connected to our problem as they can be viewed as M-posterior mean estimators based on uninformative priors.

\subsection{Convex loss for location M-posteriors}
We begin by studying M-posteriors induced by one-dimensional convex loss functions. We will see that, similarly to the frequentist framework, the boundedness of the score function leads to a high breakdown point. 
In the Bayesian setting, however, the prior also plays a crucial role in determining robustness properties.

We first state a technical lemma that generalizes \cite[Lemma 5.1]{huber1984}. Note that Huber considered Pitman-type estimators which in our setting correspond to M-posterior means with uninformative priors \(\pi = 1\).
\begin{lemma}
\label{lemma::posterior-moments-monotone}
   Assume the loss \( \loss \) is symmetric and convex and that the score \( \score \) is bounded. Under these assumptions, odd moments of the M-posterior are monotone increasing in all of its arguments (data points). On the other hand, even moments are decreasing to some point and then increasing in all of its arguments.
\end{lemma}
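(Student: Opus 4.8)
The plan is to differentiate the posterior moments with respect to a single data coordinate and to read off the sign of the derivative through a covariance representation. Write the location M-posterior as $\mposterior(\theta\mid F_n)\propto \exp\bigl(-\sum_{i=1}^n\loss(X_i-\theta)\bigr)\pi(\theta)$, fix all coordinates but one, say $X_j$, and set $M_k:=\int_\Theta\theta^k\,\mposterior(\theta\mid F_n)\,d\theta$. Let $\psi:=\loss'$ denote the derivative of the univariate loss, so that the score is $\score(x,\theta)=-\psi(x-\theta)$, which is bounded by hypothesis, and note $\partial_{X_j}\loss(X_j-\theta)=\psi(X_j-\theta)$. Differentiating $M_k$ in $X_j$ under the integral sign (justified by dominated convergence using the boundedness of $\psi$) and applying the quotient rule to $M_k=Z^{-1}\int_\Theta\theta^k e^{-\sum_i\loss(X_i-\theta)}\pi(\theta)\,d\theta$ yields the key identity
\begin{equation*}
\frac{\partial M_k}{\partial X_j}=-\Cov_{\mposterior}\bigl(\theta^k,\,\psi(X_j-\theta)\bigr).
\end{equation*}
Since $\loss$ is convex, $\psi$ is non-decreasing, hence $\theta\mapsto\psi(X_j-\theta)$ is non-increasing; symmetry of $\loss$ makes $\psi$ odd, which is convenient for locating the centre in the even case but is not needed for the shape statements, which rest on convexity.

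For the odd moments the conclusion is immediate. When $k$ is odd, $\theta\mapsto\theta^k$ is non-decreasing on all of $\R$ and is paired in the covariance with the non-increasing map $\theta\mapsto\psi(X_j-\theta)$; Chebyshev's association inequality then gives $\Cov_{\mposterior}(\theta^k,\psi(X_j-\theta))\le 0$, so $\partial M_k/\partial X_j\ge 0$. As $j$ was arbitrary, every odd moment is monotone increasing in each data point, and this step uses nothing about the prior beyond integrability of $\theta^k$.

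For the even moments $\theta\mapsto\theta^k$ is instead U-shaped (decreasing on $(-\infty,0]$, increasing on $[0,\infty)$), so the sign of the covariance is no longer fixed, and the goal becomes to show that $\partial M_k/\partial X_j$ changes sign exactly once, from negative to positive. First I would fix the direction of the crossing through the two extreme regimes: as $X_j\to-\infty$ the one-point factor $\exp(-\loss(X_j-\theta))$ behaves like $C\,e^{-B\theta}$ (since $|\psi|\le B$), tilting the posterior onto the region where $\theta^k$ is locally decreasing, so the covariance of two locally decreasing functions is nonnegative and $M_k$ is decreasing; symmetrically, as $X_j\to+\infty$ the mass is driven to where $\theta^k$ is locally increasing, giving nonpositive covariance and increasing $M_k$. (In the cleanest case $n=1$ with a flat prior, translation gives $M_2(X_j)=X_j^2+\text{const}$, confirming the valley shape outright.) The crux is then to rule out additional oscillations, and here I would generalise Huber's Lemma~5.1 by exploiting that the single-observation kernel $K(X_j,\theta)=\exp(-\loss(X_j-\theta))$ is totally positive of order two: this follows from convexity of $\loss$ via the four-point inequality $\loss(b)+\loss(c)\le\loss(a)+\loss(d)$ whenever $a\le\min(b,c)\le\max(b,c)\le d$ and $a+d=b+c$. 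Integration against a $\mathrm{TP}_2$ kernel is variation-diminishing, which forces the sign of $\partial M_k/\partial X_j$ to cross zero at most once as $X_j$ increases; combined with the boundary signs this gives the decrease-then-increase shape in each argument.

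The main obstacle is precisely this single-crossing step for the even moments: the covariance identity and the odd case are routine, whereas excluding oscillation of $\partial M_k/\partial X_j$ is where the full force of convexity of $\loss$ (not merely boundedness of $\psi$) enters, through the total-positivity of the one-point likelihood. An alternative to the $\mathrm{TP}_2$ route, if one prefers to stay elementary, is to show that at every critical point with $\partial M_k/\partial X_j=0$ one has $\partial^2 M_k/\partial X_j^2\ge 0$, so that all critical points are minima and therefore unique; this reduces the claim to a sign computation for the second derivative but requires a delicate manipulation of third-order posterior moments, which is why I expect the kernel-positivity argument to be the cleaner path.
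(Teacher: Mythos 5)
Your covariance identity $\partial M_k/\partial X_j=-\Cov_{\mposterior}\bigl(\theta^k,\psi(X_j-\theta)\bigr)$ is exactly what the paper computes, and your treatment of the odd moments is correct and essentially identical in substance: the paper exploits $\int(T_k-\theta^k)\,\mposterior(\theta\mid F_n)\,d\theta=0$ to rewrite the derivative as $D^{-1}\int(T_k-\theta^k)\bigl[\psi(X_j-\theta)-\psi(X_j-T_k^{1/k})\bigr]\pi(\theta)e^{-\sum_i\rho(X_i-\theta)}\,d\theta$, whose integrand is pointwise nonnegative for odd $k$ by monotonicity of $\psi$ --- this is just a hands-on proof of the Chebyshev association inequality you invoke.

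For the even moments, however, your argument has a genuine gap. The variation-diminishing property of a $\mathrm{TP}_2$ kernel controls the sign changes of $x\mapsto\int K(x,\theta)f(\theta)\,d\theta$ for a \emph{fixed} sign-changing $f$; here the relevant integrand is $(T_k(X_j)-\theta^k)\,\psi(X_j-\theta)\,w(\theta)$ (or the covariance integrand), which depends on $X_j$ both through the centering constant $T_k(X_j)$ and through the score factor $\psi(X_j-\cdot)$, so the single-crossing conclusion does not follow from $\mathrm{TP}_2$-ness of $e^{-\rho(x-\theta)}$ as stated. Your boundary analysis is also delicate: since $\psi$ is bounded, $\psi(X_j-\theta)$ tends to the constant $\pm\sup\psi$ as $X_j\to\pm\infty$, so the covariance tends to $0$ and its sign in the two extreme regimes is not immediate. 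The paper's device is again the recentering above: for even $k$, the integrand $(T_k-\theta^k)\bigl[\psi(X_j-\theta)-\psi(X_j-T_k^{1/k})\bigr]$ changes sign exactly once in $\theta$, at $\theta=-T_k^{1/k}$ (negative to the left, nonnegative to the right), and the decreasing-then-increasing shape is read off from this single sign change \`a la Huber's Lemma 5.1. If you want to complete your route, you should apply the sign-change bookkeeping to this recentred integrand rather than to the raw covariance.
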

A useful consequence of the above lemma is the following: the largest bias of the corrupted odd moments of the M-posterior is achieved by taking all of the corrupted sample points equal to \( +\infty\). On the other hand, the largest bias for the even moments of the M-posterior is achieved by some combination of corrupted samples from \( \{ -\infty, +\infty \}\).

We now proceed by stating the result showing how different priors affect the robustness of the M-posterior. We say that  a density function $\pi$ has exponential-like tails if it is of the form \(\pi\propto \exp(-h) \), where \( h \) is convex, symmetric and has a bounded derivative \( h'\). We say that $\pi$ has lighter than exponential tails  if it is of the form \(\pi \propto \exp(-h) \) with a convex and symmetric \( h\), but unbounded derivative \( h'\).
\begin{theorem}
\label{thm::breakdown-point-convex-loss}
    Let $\rho$ be symmetric and convex with a score function $\psi = \rho'$ that is bounded. If the prior \( \pi \)
    \begin{enumerate}
        \item {Is uninformative}, then
        \(
            \varepsilon^*_{W_2}(\pi^{\rho}_n, X^n) = \frac{1}{2}.
        \)
        \item {Has exponential-like tails,} then
        \(
            \varepsilon^*_{W_2}(\pi^{\rho}_n, X^n) \ge \frac{1}{2}, 
        \)
        and
        \(
            \varepsilon^*_{W_2}(\pi^{\rho}_n, X^n) \downarrow \frac{1}{2}
        \)
        as \( n \rightarrow \infty.\)
        \item {Has lighter than exponential tails,} then the breakdown point does not exist, in the sense that no contamination level can drive the M-posterior arbitrarily far in $W_2$-distance.
    \end{enumerate}
\end{theorem}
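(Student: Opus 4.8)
The plan is to reduce the $W_2$ breakdown question to the behaviour of the first two moments of the M-posterior, and then to read off that behaviour from the linear tilt that contamination induces on the log-posterior. Concretely, write the location M-posterior as $\mposterior(\theta\mid F_n)\propto\exp\!\big(-\sum_{i=1}^n\loss(X_i-\theta)\big)\pi(\theta)$ with $\pi\propto\exp(-h)$ (taking $h\equiv$ const in the uninformative case). Because $\loss$ is symmetric and convex with bounded score, $\score$ is odd, nondecreasing and bounded, so the limit $c:=\lim_{u\to+\infty}\score(u)>0$ exists and $\lim_{u\to-\infty}\score(u)=-c$. The two available lemmas do the heavy lifting: Lemma \ref{lemma::bounds-on-W2} shows that $W_2\big(\mposterior(\cdot\mid F_{(n,m)}),\mposterior(\cdot\mid F_n)\big)\to\infty$ is equivalent to divergence of the contaminated posterior's mean or variance (the original posterior having fixed, finite moments), and the monotonicity consequence of Lemma \ref{lemma::posterior-moments-monotone} (which continues to hold for any fixed symmetric prior, since a multiplicative prior does not affect the mixed $\partial_{X_i}\partial_\theta$ log-density, equal to $\score'(X_i-\theta)\ge 0$) lets me restrict the supremum over contaminations to the extreme configurations in which each corrupted point is sent to $+\infty$ or $-\infty$: the worst first moment is attained by sending all $m$ corrupted points to $+\infty$, and the worst second moment by some assignment of the corrupted points to $\{-\infty,+\infty\}$.

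The single computation underlying all three cases is the tail of the tilted log-posterior. Sending the $m$ corrupted points to $+\infty$ replaces their contribution $-\sum_{\mathrm{bad}}\loss(X_i-\theta)$ by the linear term $mc\,\theta+\mathrm{const}$, so the limiting log-posterior is $-\sum_{\mathrm{good}}\loss(X_i-\theta)+mc\,\theta-h(\theta)$, whose right tail ($\theta\to+\infty$) has slope
\[
(2m-n)\,c-h'(+\infty).
\]
Whether the mean (and, after checking the analogous $\pm\infty$ configurations, the variance) stays finite is governed entirely by the sign of this slope: a strictly negative slope gives exponentially light tails and finite moments, whereas a nonnegative slope makes the right tail non-integrable, so that as the corrupted points are pushed out the posterior mean escapes to $+\infty$.

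I would then specialize. In the uninformative case $h'(+\infty)=0$, the slope $(2m-n)c$ is negative precisely for $m<n/2$; there Lemma \ref{lemma::bounds-on-W2} together with the endpoint reduction bound the contaminated mean and variance uniformly, giving no breakdown, whereas for $m\ge n/2$ the flat or growing right tail drives the mean to infinity, so $\varepsilon^*_{W_2}(\mposterior,X^n)=1/2$. For exponential-like tails $h'(+\infty)=b\in(0,\infty)$, the slope changes sign at $m^{*}=n/2+b/(2c)$, so the breakdown point equals $\lceil m^{*}\rceil/n$, which is $\ge 1/2$ and decreases to $1/2$ as $n\to\infty$ since the additive $b/(2c)$ is an $O(1)$ count. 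For lighter-than-exponential tails $h'(+\infty)=+\infty$, the superlinear term $-h(\theta)$ dominates the linear tilt for every fixed $m\le n$, so the limiting posterior remains proper with finite moments no matter how many points are corrupted; by monotonicity the supremum of the moments over all contaminations is finite and $\sup W_2<\infty$ for all $m$, i.e.\ the breakdown point does not exist.

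The main obstacle I anticipate is making the endpoint reduction fully rigorous: I need to justify interchanging the supremum over contaminations with the limit as corrupted points tend to $\pm\infty$, and to control the variance uniformly (not just the mean) in the no-breakdown direction, which means bounding the second raw moment by the worst of the finitely many $\pm\infty$ endpoint configurations and verifying each is finite via the strictly negative tail slope. Secondary but nontrivial points are confirming $c>0$ (nondegeneracy of a bounded odd score), handling the boundary $m=n/2$ (where the exactly-flat tail is already non-integrable, so breakdown occurs at $1/2$ rather than strictly above it) along with the integer rounding that makes the uninformative value come out to exactly $1/2$, and, in the third case, verifying that pushing contamination to infinity yields a genuinely proper limiting posterior so that the mean cannot be made arbitrarily large.
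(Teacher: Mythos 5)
Your proposal is correct and follows essentially the same route as the paper's proof: reduce $W_2$ breakdown to the first two posterior moments via Lemma~\ref{lemma::bounds-on-W2}, use Lemma~\ref{lemma::posterior-moments-monotone} to restrict to the $\pm\infty$ endpoint configurations, and compare the linear tilt $mc\theta$ from the contaminated points against the tail slope $-(n-m)c - h'(\pm\infty)$ of the remaining log-posterior, yielding the thresholds $m<n/2$, $m<n/2+c_h/(2c)$, and never-breaking in the three prior regimes. The obstacles you flag (endpoint reduction, second-moment control over all $\{-\infty,+\infty\}$ assignments, the boundary $m=n/2$) are exactly the points the paper's proof addresses.
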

\begin{remark}
    While the above statement only considers losses in one dimension, it can be extended to loss functions \( \loss : \R^d \to \R \) of the form \( \loss(x) = \widetilde{\loss}(\| x \|) \), where \( \widetilde{\loss} \) satisfies the assumptions of the above theorem. The corresponding multi-dimensional result is stated in \Cref{thm::convex-loss-higher-dim} in the \Cref{app:BP_general_d}.
\end{remark}

Theorem \ref{thm::breakdown-point-convex-loss} highlights the importance of the tails of the prior in determining the breakdown properties of the M-posterior. 
First, it shows that when the M-posteriors are constructed using flat improper priors, $\pi = c > 0$, a bounded score guarantees a breakdown point of $1/2$. Therefore, in this case, the M-posterior has the same breakdown point as its corresponding location M-estimator.  Second, it shows that when the prior has exponential-like tails, the posterior breakdown point is larger or equal to $1/2$, but asymptotically exactly $1/2$. 
Lastly, it shows that when the priors have lighter than exponential tails, the posterior cannot be broken. The interpretation of this seemingly surprising result is that lighter than exponential priors are so strong for robust convex losses that they prevent the posteriors from moving arbitrarily even if all $n$ data points are perturbed arbitrarily. A closer inspection of the proof makes it clear that when $n=m$ the posterior distribution remains lighter than exponential for all $n$, but the posterior mean becomes an increasing function of $n$. Hence the larger the $n$, the more the posterior can be moved in a $W_2$ sense.

In \Cref{fig:different-priors}, we illustrate the results of \Cref{thm::breakdown-point-convex-loss} in an empirical study.  We consider the loss \( \loss(x) = |x|\) with a bounded score function \(\psi(x) = \operatorname{sgn}(x)\). Furthermore, we consider three priors, each one representing one of the three groups of the priors considered in \Cref{thm::breakdown-point-convex-loss}: the flat prior \( \pi=1\) [uninformative], exponential prior [exponential-like tails], and Gaussian prior [lighter than exponential tails]. The blue curves show the M-posteriors fitted on the original non-corrupted sample, while red curves consider the M-posteriors after various levels of corruption.  As suggested by the first case of \Cref{thm::breakdown-point-convex-loss}, the breakdown point under the uninformative prior is equal to \( 1/2\), which can be seen by looking at the first row of \Cref{fig:different-priors} and noticing that the red curve in plot in the first column, with 50\% corruption, begins to move away from the blue curve, and moves farther away as the corruption grows in the second and third column. Furthermore, in the second row, the example shows that the breakdown point under the exponential prior is indeed at least \( 1/2\), where we see that in this example that the breakdown point is strictly greater than \( 1/2\). Lastly, considering the Gaussian prior in the third row, we see that the posterior can't be moved arbitrarily far even by corrupting all data points in the sample.

\begin{figure}[htb]
  \centering
  \includegraphics[width=\textwidth]{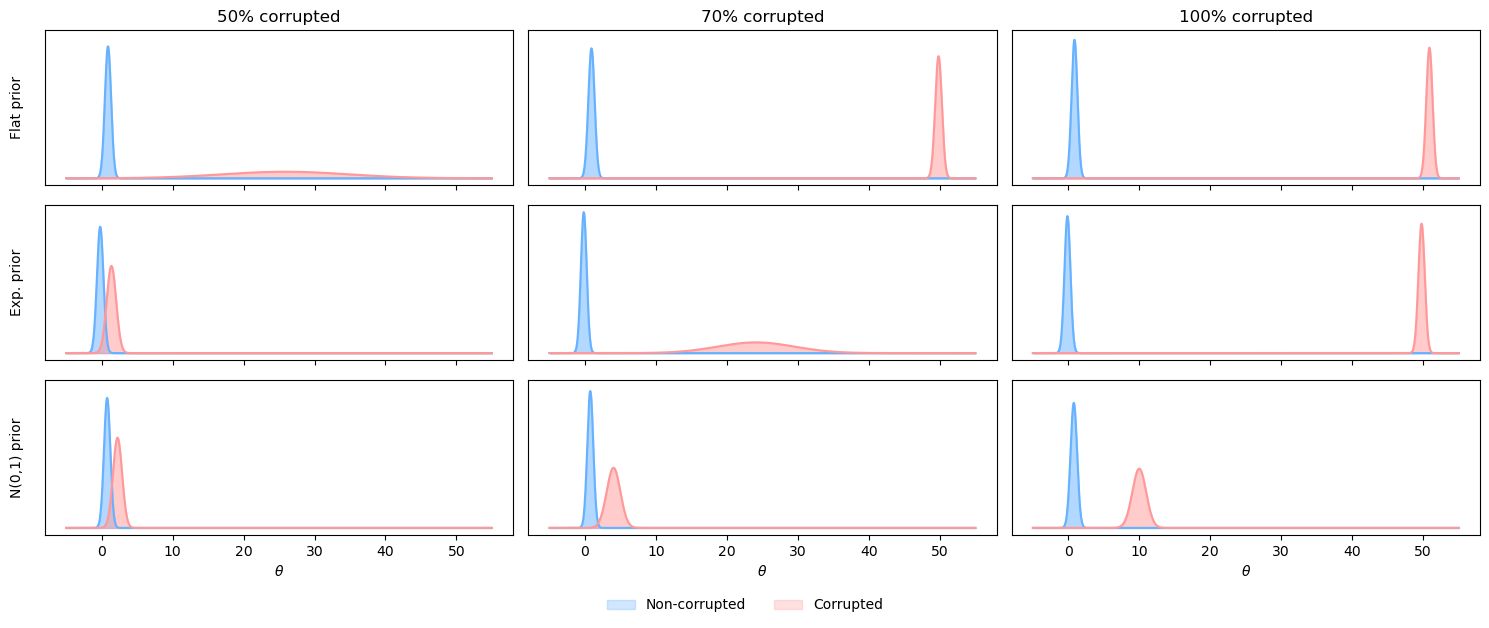}
  \caption{
    Density plots of the M-posterior for the location parameter~\(\theta\) using a Laplace likelihood, under three representative priors (rows: improper, exponential and Gaussian) and three contamination levels (columns: 50\%, 70\%, 100\%).
    The blue-shaded curves show the M-posteriors fitted on the original non-corrupted sample, while red-shaded curves correspond to the M-posteriors after shifting a fraction of observations by 50\% or more.
    This figure illustrates the implications of \Cref{thm::breakdown-point-convex-loss}: the posterior breakdown point  for uninformative priors is $1/2$, for  the exponential prior it can exceed \(1/2\) and for the Gaussian prior it does not exist.
}
  \label{fig:different-priors}
\end{figure}
\subsection{Nonconvex loss for location M-posteriors}
We continue by examining the posterior breakdown point of M-posteriors with redescending score functions.
Redescending M-estimators are characterized by score functions $\psi(x)$ that increase near the origin but eventually decrease toward zero as $|x|$ becomes large, effectively downweighting extreme observations.
This makes them particularly robust to outliers, as observations with very large deviations have diminishing influence on the estimator.
Common examples of redescending M-estimators include Tukey's biweight, the Hampel's loss, the Andrews' sine estimator and Cauchy-type M-estimators \citep{andrewsetal1972,tukeybook,hampeletal1986}.%

\subsubsection{Redescending M-posteriors with unbounded $\rho$.} \label{sec:redescend}
We now formally define the unbounded losses with redescending score functions, as in \cite{huber1984}. %
Essentially, in this section we consider cases where the loss still increases to infinity in the tails, but more slowly that linearly. We work under the following assumptions throughout this section. Assume that loss \( \loss \) is even, \( \loss(0) = 0\), and that \( \loss \) is increasing towards both sides. In addition, assume that
\(
    \lim_{|x| \rightarrow \infty} \loss (x) = \infty,
\)
but
\(
    \lim_{|x| \rightarrow \infty} {\loss (x)} / {| x |} = 0.
\)
Finally, we assume that \( \psi \) is continuous, and that there exists an \( x_0 \) such that \( \psi \) is non-decreasing for \( 0 < x < x_0\), and non-increasing for \( x_0 < x < \infty\). 
For a loss \( \loss \) satisfying these conditions, we say that \( \mposterior(\cdot \mid F_n)\) is a \emph{redescending M-posterior} with unbounded $\rho$.

Under the additional assumption about the finiteness of the first moment,
\cite{huber1984} showed that, using the improper prior \( \pi = 1\), the breakdown point of the posterior mean is equal to \( \frac{1}{2}. \) We extend these results to the M-posteriors. To that end, we first state the following technical lemma that will be used in deriving the main result.
\begin{lemma}
\label{lemma::huber-paper-redescending-loss}
    Assume \( \loss \) satisfies the assumptions given in the first paragraph of~\Cref{sec:redescend}. Let \( m \leq n\) and let \(\corruptedsample\) be a sample where we corrupted at most $m$ points. Define
    \[
        \Delta_{\corruptedsample} (\theta) := \sum_{x \in \corruptedsample} \bigl ( \loss (x - \theta) - \loss (x) \bigr ).
    \]
    Then there is a constant \(C\), which depends on \(X^n\) and on \(m\), but not on the actual corrupted values in \( \corruptedsample\), such that for all \( \theta\) we have
    $(n - 2m) \rho (\theta) - C \leq \Delta_{\corruptedsample} (\theta) \leq n \rho(\theta) + C.$
\end{lemma}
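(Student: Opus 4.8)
The plan is to reduce the two-sided estimate on $\Delta_{\corruptedsample}(\theta)$ to a single per-observation inequality, whose technical core is a \emph{subadditivity-up-to-a-constant} property of $\rho$. First I would prove that there is a constant $C_0$, depending only on $\rho$, such that $\rho(a+b) \le \rho(a) + \rho(b) + C_0$ for all $a,b \ge 0$. Writing $\rho(t) = \int_0^t \psi(s)\,ds$ for $t \ge 0$ (legitimate since $\rho(0)=0$ and $\psi=\rho'$ is continuous) and assuming without loss of generality $a \le b$, one has $\rho(a+b) - \rho(a) - \rho(b) = \int_0^a [\psi(b+u) - \psi(u)]\,du$. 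Because $\psi \ge 0$ on $[0,\infty)$ (as $\rho$ is increasing there) and $\psi$ is unimodal — nondecreasing on $(0,x_0)$ and nonincreasing on $(x_0,\infty)$ — the integrand is $\le 0$ whenever $u \ge x_0$, since then both $u$ and $b+u$ lie on the decreasing branch, while for $u < x_0$ it is at most $\psi(b+u) \le B := \psi(x_0) = \sup\psi$. Hence the integral is bounded by $Bx_0 =: C_0$, which gives the claim; note this is exactly where the redescending shape and the sublinear growth of $\rho$ are used to confine the ``convex'' contribution to the bounded region $u < x_0$.

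Next I would transfer this into per-observation bounds valid for every real $x,\theta$, using evenness and the triangle inequality in both directions. Since $|x-\theta| \le |x|+|\theta|$ and $\rho$ is increasing on $[0,\infty)$, $\rho(x-\theta) - \rho(x) = \rho(|x-\theta|) - \rho(|x|) \le \rho(|x|+|\theta|) - \rho(|x|) \le \rho(\theta) + C_0$; symmetrically, from $|x| \le |x-\theta| + |\theta|$ one gets $\rho(|x|) - \rho(|x-\theta|) \le \rho(\theta) + C_0$, i.e.\ $\rho(x-\theta) - \rho(x) \ge -\rho(\theta) - C_0$. For the uncorrupted observations I would instead use a sharper lower bound carrying the correct $+\rho(\theta)$ coefficient: from $|x-\theta| \ge \big||\theta|-|x|\big|$ together with $\psi \le B$, a mean-value estimate yields $\rho(x-\theta) \ge \rho(\theta) - B|x|$ (the case $|\theta|<|x|$ being immediate since then $\rho(\theta)\le B|x|$), so that $\rho(x-\theta) - \rho(x) \ge \rho(\theta) - (B|x| + \rho(x))$, where the subtracted quantity is a fixed constant because these $x$ are the original data points.

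Finally I would split $\corruptedsample$ into the at least $n-m$ uncorrupted points and the at most $m$ corrupted ones, and sum. For the upper bound, applying $\rho(x-\theta)-\rho(x) \le \rho(\theta)+C_0$ to all $n$ terms gives $\Delta_{\corruptedsample}(\theta) \le n\rho(\theta) + nC_0$. For the lower bound, each uncorrupted point contributes at least $\rho(\theta) - (B|x|+\rho(x))$ and each corrupted point at least $-\rho(\theta) - C_0$; since $\rho(\theta)\ge 0$ and there are at most $m$ corrupted points, the $\rho(\theta)$-coefficients combine to at least $(n-m)-m = n-2m$, while the leftover constants are at most $\sum_{i=1}^n (B|X_i| + \rho(X_i)) + mC_0$, which depends only on $X^n$ and $m$. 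Taking $C$ to be the larger of the two constants then yields both inequalities. The main obstacle is the lower bound on the corrupted terms: because their values are arbitrary and $\rho$ is unbounded, no naive estimate controls $\rho(x-\theta)-\rho(x)$ from below, and it is precisely the two-sided subadditivity-up-to-a-constant — resting on the sublinear growth and the unimodal (redescending) shape of $\psi$ — that supplies a lower bound independent of the corrupted values.
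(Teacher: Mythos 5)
Your argument is correct. The paper itself gives no proof of this lemma---it is a one-line citation to Lemma 4.3 of Huber (1984)---and your self-contained derivation reconstructs essentially that argument: approximate subadditivity $\rho(a+b)\le\rho(a)+\rho(b)+Bx_0$ from the bounded unimodal score, the resulting two-sided per-observation bounds $|\rho(x-\theta)-\rho(x)|\le\rho(\theta)+C_0$ uniform in the (possibly corrupted) value $x$, the sharper lower bound $\rho(x-\theta)-\rho(x)\ge\rho(\theta)-(B|x|+\rho(x))$ for the uncorrupted points, and the count $(n-m)-m=n-2m$ using $\rho(\theta)\ge0$. The only small quibble is the parenthetical claim that the sublinear growth $\rho(x)/|x|\to0$ is used in the subadditivity step; it is not---only the unimodality and boundedness of $\psi$ enter there---but this does not affect correctness.
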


We proceed by stating the result that characterizes the posterior breakdown point of the M-posteriors arising from redescending losses:
\begin{theorem}
\label{thm::breakdown-point-redescending-loss}
    Let \( \loss \) be a loss satisfying the assumptions given in the first paragraph of~\Cref{sec:redescend}. 
    Let \( \pi \) be an arbitrary (potentially improper) prior. If \( \int_\R  \theta^2 \pi(\theta) \exp (- \loss(\theta) )  \, d\theta < \infty \), then the breakdown point of the M-posterior \( \mposterior(\cdot \mid F_n) \) is at least 1/2. Furthermore, if we assume that \( \pi = 1\), then the breakdown point is equal to 1/2.
\end{theorem}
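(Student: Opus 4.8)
The plan is to reduce everything to the control of the first two moments of the corrupted posterior, exploiting the two-sided bound of \Cref{lemma::bounds-on-W2}, which says that $W_2(P,Q)$ is finite as soon as the means and variances of $P$ and $Q$ are finite, and diverges if $|\mu_P-\mu_Q|\to\infty$. Fix the data $X^n$ and a corruption $\corruptedsample$ changing at most $m$ points. Writing the corrupted posterior as $\mposterior(\theta\mid F_{(n,m)})\propto \pi(\theta)\exp\bigl(-\sum_{x\in\corruptedsample}\loss(x-\theta)\bigr)$ and cancelling the $\theta$-free factor $\exp\bigl(-\sum_x\loss(x)\bigr)$, I get $\mposterior(\theta\mid F_{(n,m)})\propto \pi(\theta)\exp\bigl(-\Delta_{\corruptedsample}(\theta)\bigr)$. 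Then \Cref{lemma::huber-paper-redescending-loss} gives, with a constant $C=C(X^n,m)$ independent of the corrupted values, the uniform sandwich $e^{-C}\exp(-n\loss(\theta))\le \exp(-\Delta_{\corruptedsample}(\theta))\le e^{C}\exp(-(n-2m)\loss(\theta))$. This is the engine of both directions, and the whole point of the argument is that $C$ does not see the corruption.

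For the lower bound $\varepsilon^*_{W_2}\ge 1/2$, I would take any $m$ with $m/n<1/2$, so that $n-2m\ge 1$ and hence $\exp(-(n-2m)\loss(\theta))\le\exp(-\loss(\theta))$ (using $\loss\ge 0$). The upper half of the sandwich then bounds the unnormalized second moment by $e^{C}\int_\Theta \theta^2\pi(\theta)\exp(-\loss(\theta))\,d\theta$, which is finite by hypothesis; the lower half bounds the normalizing constant below by $e^{-C}\int_\Theta\pi(\theta)\exp(-n\loss(\theta))\,d\theta$, a strictly positive constant independent of the corruption. Dividing, the second moment of $\mposterior(\cdot\mid F_{(n,m)})$ is bounded uniformly over all corruptions, and with it both $\mu_P^2\le \E_P[\theta^2]$ and $\sigma_P^2\le\E_P[\theta^2]$. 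The reference posterior $\mposterior(\cdot\mid F_n)$ has finite mean and variance by the same sandwich with $m=0$. By \Cref{lemma::bounds-on-W2}, $\sup_{F_{(n,m)}}W_2\bigl(\mposterior(\cdot\mid F_{(n,m)}),\mposterior(\cdot\mid F_n)\bigr)<\infty$ for every $m/n<1/2$, which is exactly the claim that the posterior does not break below contamination fraction $1/2$.

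For the matching upper bound in the case $\pi=1$, I would exhibit a genuine breakdown at $m/n=1/2$ (say $n$ even, $m=n/2$) by placing all $m$ corrupted points at a common location $y$ and letting $y\to\infty$. Since $\loss$ is sublinear with non-increasing $\score$ on $(x_0,\infty)$, one has $\score(x)\to 0$ as $x\to\infty$ (otherwise $\loss(x)/x$ would have a positive limit), so $\loss(y-\theta)-\loss(y)=-\int_0^\theta\score(y-t)\,dt\to 0$ for $\theta$ in any bounded set. This makes the corrupted posterior split into two modes of comparable mass: one near the good data, contributing a factor $\asymp e^{-\frac{n}{2}\loss(y)}\int\exp\bigl(-\sum_{i}\loss(X_i-\theta)\bigr)d\theta$, and one near $y$, contributing $\asymp e^{-\frac{n}{2}\loss(y)}\int\exp\bigl(-\tfrac{n}{2}\loss(u)\bigr)du$ after the substitution $\theta=y+u$. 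The posterior mean therefore behaves like a fixed positive fraction of $y$ and diverges, so by the lower bound of \Cref{lemma::bounds-on-W2} the $W_2$-distance to the uncorrupted posterior tends to infinity. Equivalently, one may invoke \cite{huber1984}: for $\pi=1$ the posterior mean is a Pitman-type estimator with breakdown point $1/2$, and its blow-up transfers to $W_2$ through $W_2\ge|\mu_P-\mu_Q|$. Combining with the previous paragraph yields $\varepsilon^*_{W_2}(\mposterior,X^n)=1/2$.

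The step I expect to be the main obstacle is this last blow-up construction. The lemma sandwich only tells us that at $m=n/2$ the upper envelope degenerates to the constant $e^{C}$ (since $n-2m=0$), which permits but does not force breakdown; to actually produce it one must quantify the mass in each mode and check integrability of $\int\exp(-\tfrac{n}{2}\loss(u))\,du$ (and of the good-data factor), ruling out cancellations and handling the slowly-growing-$\loss$ regime where the far mode may even be improper. The lower-bound direction, by contrast, is essentially routine once the sandwich and \Cref{lemma::bounds-on-W2} are in hand.
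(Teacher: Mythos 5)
Your proposal is correct and follows essentially the same route as the paper: the lower bound is obtained exactly as you describe, by combining the sandwich from \Cref{lemma::huber-paper-redescending-loss} with the integrability hypothesis to bound the first two moments of the corrupted posterior uniformly over corruptions, and then invoking the two-sided bound of \Cref{lemma::bounds-on-W2}. For the upper bound with $\pi=1$, the paper takes precisely the route you mention as an aside: translation equivariance of the posterior mean gives breakdown point at most $1/2$ by \cite{donoho:huber1983}, and this transfers to $W_2$ through $W_2\ge|\mu_P-\mu_Q|$, so the explicit two-mode blow-up construction that you flag as the main obstacle is never needed.
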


Similar to the convex-loss case studied in the previous section, \Cref{thm::breakdown-point-redescending-loss} emphasizes that the prior controls the differences between the breakdown point in the frequentist and Bayesian setting. Since the prior is data independent, it can only help in making the posterior harder to break, resulting in the breakdown point of the corresponding M-posterior of at least 1/2. On the other hand, by taking an uninformative prior \( \pi = 1\), we retrieve the same result as \cite{huber1984}: the breakdown point of an estimator resulting from the redescending loss is equal to 1/2. 

\subsubsection{M-posteriors with bounded $\rho$.}
In this section, we demonstrate that when discussing the posterior breakdown point, there is an important distinction to be made between losses  that are unbounded and those that are bounded. In fact,
M-estimators with bounded loss functions such as the Tukey loss, the Hampel loss and the Huber-skip loss are more popular that their unbounded counterparts in the robust statistics literature.  However, \cite{huber1984} pointed out that bounded losses do not make sense for P-estimators i.e. M-posterior mean estimators based on uninformative priors. We similarly argue that for M-posteriors, bounded losses such that $|\rho|
\leq C<\infty$ can only lead to well defined posteriors if we use proper priors. 
Indeed, the normalizing constant will not be defined otherwise since 
\begin{equation*}
    \int_{\mathbb{R}} \pi(\theta) e^{-\sum_{i=1}^n\rho(X_i-\theta)}d\theta\geq e^{-nC}\int_{\mathbb{R}} \pi(\theta) d\theta.
\end{equation*}
A similar argument to that given above makes it clear that the M-posterior can only have two finite moments if the prior has two finite moments.

It is also not too hard to see that the breakdown point of M-posteriors with bounded losses does not exist. Indeed, M-posterior moments will be uniformly bounded over all corrupted samples: consider a corrupted sample \( \corruptedsample\), then
\[
    \frac{\int_\R |\theta| \pi(\theta)  \exp \bigl (-\sum_{x \in \corruptedsample} \loss(x - \theta) \bigr ) \, d \theta} {\int_\R \pi(\theta) \exp \bigl (-\sum_{x \in \corruptedsample} \loss(x - \theta) \bigr ) \, d \theta}
    \leq
    \frac{\int_\R |\theta| \pi(\theta)  e ^ {nC} \, d\theta}{\int_\R \pi(\theta) e ^ {-nC} \, d\theta} 
    =
    e ^ {2nC} \int_\R |\theta| \pi(\theta)  \, d\theta
\]
From the above, we can see that the posterior mean cannot be made infinite even if all the data points in the sample are corrupted, and the same conclusion can be reached for the posterior variance. Hence, the $W_2$ distance can never be made infinite and the breakdown point does not exist. This is an undesirable property that prevents the  M-posterior from reporting catastrophic failures and suggests that, in the context of M-posteriors, one should only consider robust unbounded losses that can be used to build a Gibbs measure that integrates to one. This is in contrast to the frequentist setting, where redescending M-estimators with bounded losses can have some optimality properties \citep{hampel:rousseeuw:ronchetti1981} or serve as the building blocks for high-breakdown point estimators in multivariate problems \citep{rousseeuw:yohai1984, yohai1987, davies1987,lopuhaa:rousseeuw1991}.

\subsection{Posterior moments and quantiles} \label{sec::BP-posterior-moment-quantiles}
In the preceding sections, we analyzed the robustness of M-posteriors through their posterior breakdown point. We now shift our focus to the breakdown properties of functionals of these posteriors, specifically the posterior mean and quantiles. Our first result establishes that the posterior mean inherits the robustness of the underlying M-posterior: the breakdown point of the M-posterior mean is bounded below by that of the M-posterior itself.
\begin{proposition}
\label{prop::bp-of-posterior-mean}
    Consider loss \( \loss \) and prior \( \pi \). If the mean of the M-posterior, which we label as \( T_1 \), is finite, then \( \varepsilon^*(T_1, X^n) \geq \varepsilon^*_{W_2}(\pi^\rho_n, X^n).\)
\end{proposition}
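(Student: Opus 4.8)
The plan is to deduce the result directly from the lower bound of \Cref{lemma::bounds-on-W2}, which controls the difference of means of two measures by their $2$-Wasserstein distance. Since the posterior mean functional $T_1(G)=\int_\Theta \theta\,\mposterior(\theta\mid G)\,d\theta$ is exactly the mean of the posterior $\mposterior(\cdot\mid G)$, the lemma yields, for every contaminating empirical distribution $G$,
\[
    \bigl| T_1(G)-T_1(F_n) \bigr|
    \;\le\;
    W_2\bigl(\mposterior(\cdot\mid G),\,\mposterior(\cdot\mid F_n)\bigr),
\]
with the convention that both sides may equal $+\infty$. This single pointwise-in-$G$ inequality drives the entire argument.

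I would then argue by working strictly below the posterior breakdown level. Fix $m$ with $m/n < \varepsilon^*_{W_2}(\mposterior,X^n)$; by the defining minimum in the posterior breakdown point, $\sup_{G}W_2(\mposterior(\cdot\mid G),\mposterior(\cdot\mid F_n))<\infty$, where the supremum runs over the contamination class at level $m$. Taking the supremum of the displayed inequality over that same class gives $\sup_G|T_1(G)-T_1(F_n)|<\infty$, so the posterior mean does not break at level $m$. Since this holds for every $m<n\,\varepsilon^*_{W_2}$, the minimizing fraction for $T_1$ can be no smaller, i.e.\ $\varepsilon^*(T_1,X^n)\ge\varepsilon^*_{W_2}(\mposterior,X^n)$, which is the claim. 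Monotonicity of the suprema in $m$ makes this comparison of the two minimizing fractions clean.

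Two matters require care, and I expect the contamination-class bookkeeping to be the main one. The breakdown point $\varepsilon^*(T_1,X^n)$ of \Cref{def:BP} is defined over the replacement (Hamming) neighborhood $\mathcal{B}_H(X^n,m)$, whereas $\varepsilon^*_{W_2}$ uses the Kolmogorov ball $\mathcal{F}_{(n,m)}$. Replacing at most $m$ of the $n$ sample points shifts the empirical CDF by at most $m/n$ in the sup-norm, so the empirical distribution of any member of $\mathcal{B}_H(X^n,m)$ lies in $\mathcal{F}_{(n,m)}$; hence the supremum of $|T_1(G)-T_1(F_n)|$ over the Hamming neighborhood is dominated by its supremum over $\mathcal{F}_{(n,m)}$, and the bound above applies verbatim. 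The second, minor point is that \Cref{lemma::bounds-on-W2} is stated under finite second moments while we assume only $T_1(F_n)$ finite; this is harmless, since the lower bound $(\mu_P-\mu_Q)^2\le W_2^2(P,Q)$ follows from Jensen's inequality applied to any coupling and needs only finite means. In the regime $m/n<\varepsilon^*_{W_2}$ the finiteness of $W_2$ furnishes a coupling with finite $\E[(X-Y)^2]$, forcing $T_1(G)$ finite (as $T_1(F_n)$ is), so the inequality holds termwise; when $W_2=\infty$ it is vacuous. No genuine obstruction arises beyond this bookkeeping.
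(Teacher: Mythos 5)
Your proposal is correct and follows essentially the same route as the paper's proof: both apply the lower bound of \Cref{lemma::bounds-on-W2} to bound $|T_1(G)-T_1(F_n)|$ by the $W_2$-distance of the corresponding posteriors, then take suprema over the contamination class at any level $m/n$ below the posterior breakdown point. Your additional remarks on reconciling the Hamming and Kolmogorov contamination classes and on relaxing the finite-second-moment hypothesis for the lower bound are careful bookkeeping that the paper leaves implicit, but they do not change the argument.
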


It is instructive to compare this result with the breakdown point of the sample mean, consistent with the discussion of posterior moment influence functions in \Cref{sec:IF_moments_quantiles}. Recall the standard mean functional $\mu_1(\P):=\int_{\mathbb{R}} x \, d\P(x)$. The breakdown point of the sample mean equals $1/n$. Consequently, the standard sample mean is not robust in the breakdown-point sense, in contrast to the M-posterior mean, which inherits robustness from the chosen loss.

We continue with examining the breakdown properties of the posterior quantiles. Recall that in \eqref{eqn::definiton-posterior-quantiles} we defined the posterior (left) $\tau$-quantile functional as
\begin{equation*}
    T_\tau(F_n)
    :=
    \inf \Bigl \{ \theta : \int_{-\infty}^{\theta} \mposterior (\theta' \mid F_n)\, d\theta' \geq \tau\Bigr\}.
\end{equation*}
The following technical lemma controls the distance of the distribution quantile to its mean in terms of the variance.

\begin{lemma}
\label{lemma::bound-on-quantiles}
    Let \( Q \) be a distribution with finite variance \( \sigma^2\). Let \( T_\tau\) be its (left) \( \tau\)-quantile and let \( \mu \) denote the mean. Then
    \[
        \bigl | \mu - T_\tau \bigr| \leq \sigma \sqrt{\max\Big \{ \frac{\tau}{1-\tau}, \frac{1 - \tau}{\tau} \Big \}}.
    \]
\end{lemma}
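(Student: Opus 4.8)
The plan is to reduce the claim to the one-sided Chebyshev (Cantelli) inequality, which states that for $X \sim Q$ with mean $\mu$ and variance $\sigma^2$, and any $a > 0$,
\[
    \P(X - \mu \geq a) \leq \frac{\sigma^2}{\sigma^2 + a^2}, \qquad \P(X - \mu \leq -a) \leq \frac{\sigma^2}{\sigma^2 + a^2}.
\]
The underlying idea is that the defining inequalities of the left $\tau$-quantile force a certain amount of mass to lie on the far side of $T_\tau$ from $\mu$, while Cantelli caps that same mass in terms of the distance $|T_\tau - \mu|$; combining the two pins down the distance.

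First I would record the two quantile inequalities. Writing $F$ for the CDF of $Q$, the definition $T_\tau = \inf\{t : F(t) \geq \tau\}$ together with right-continuity of $F$ gives $\P(X \leq T_\tau) \geq \tau$, and since $F(t) < \tau$ for every $t < T_\tau$ we also obtain $\P(X < T_\tau) \leq \tau$, equivalently $\P(X \geq T_\tau) \geq 1 - \tau$.

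Next I would split on the sign of $T_\tau - \mu$. If $T_\tau \geq \mu$, set $a = T_\tau - \mu \geq 0$ (the case $a = 0$ being trivial) and combine $\P(X \geq T_\tau) \geq 1 - \tau$ with the first Cantelli bound to get $1 - \tau \leq \sigma^2/(\sigma^2 + a^2)$; rearranging gives $a^2 \leq \sigma^2 \tau/(1-\tau)$, i.e. $T_\tau - \mu \leq \sigma\sqrt{\tau/(1-\tau)}$. Symmetrically, if $T_\tau < \mu$, set $b = \mu - T_\tau > 0$ and combine $\P(X \leq T_\tau) \geq \tau$ with the second Cantelli bound (applied to the deviation $-(X-\mu)$) to get $\tau \leq \sigma^2/(\sigma^2 + b^2)$, which yields $\mu - T_\tau \leq \sigma\sqrt{(1-\tau)/\tau}$. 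Taking the larger of the two bounds over the two exhaustive cases produces exactly the stated $\max$.

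There is no serious obstacle here; the only points requiring care are matching the correct Cantelli direction to each case and handling strict versus non-strict inequalities in the quantile definition (which is why I would record $\P(X < T_\tau) \leq \tau$ rather than the weaker-sounding $\P(X \leq T_\tau) \leq \tau$, so that the tail bound $\P(X \geq T_\tau) \geq 1-\tau$ is available). The assumed finiteness of $\sigma^2$ is precisely what licenses the use of Cantelli.
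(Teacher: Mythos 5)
Your proof is correct and follows essentially the same route as the paper's: both bound the two one-sided deviations $T_\tau-\mu$ and $\mu-T_\tau$ via Cantelli's inequality applied to the right and left tails respectively, and your version simply makes the quantile inequalities $\P(X\geq T_\tau)\geq 1-\tau$ and $\P(X\leq T_\tau)\geq\tau$ explicit where the paper leaves them implicit.
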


With this in mind, we can characterize the breakdown point of the posterior quantiles:
\begin{proposition}
\label{prop::bp-posterior-quantiles}
    Consider loss \( \loss \) and prior \( \pi \). Suppose that the M-posterior has finite variance. Then, for any \( \tau \in (0,1),\) we have
    \( \varepsilon^*(T_\tau, X^n) \geq \varepsilon^*_{W_2}(\pi^\rho_n, X^n). \)
\end{proposition}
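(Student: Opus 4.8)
The plan is to run the same argument used for the posterior mean in \Cref{prop::bp-of-posterior-mean}, but to additionally control the posterior \emph{variance} so that \Cref{lemma::bound-on-quantiles} can convert mean-and-variance stability into quantile stability. I would work below the posterior breakdown level. Write $m^*/n := \varepsilon^*_{W_2}(\mposterior, X^n)$ and fix any $m$ with $m < m^*$. By the definition of the posterior breakdown point, $M := \sup_{G \in \mathcal{F}_{(n,m)}} W_2(\mposterior(\cdot \mid G), \mposterior(\cdot \mid F_n)) < \infty$. Abbreviate $P := \mposterior(\cdot \mid G)$ and $Q := \mposterior(\cdot \mid F_n)$, with means $\mu_P, \mu_Q$ and variances $\sigma_P^2, \sigma_Q^2$; by hypothesis $\sigma_Q^2 < \infty$, and since $W_2(P,Q) \le M < \infty$ forces $P$ to have a finite second moment, $\sigma_P^2 < \infty$ as well.

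The first two ingredients are immediate from the stated lemmas. The lower bound in \Cref{lemma::bounds-on-W2} gives $|\mu_P - \mu_Q| \le W_2(P,Q) \le M$, exactly as in the proof of \Cref{prop::bp-of-posterior-mean}. Applying \Cref{lemma::bound-on-quantiles} to $P$ and to $Q$ yields $|T_\tau(P) - \mu_P| \le \sigma_P \sqrt{C_\tau}$ and $|T_\tau(Q) - \mu_Q| \le \sigma_Q \sqrt{C_\tau}$, where $C_\tau := \max\{\tau/(1-\tau), (1-\tau)/\tau\}$ is finite for $\tau \in (0,1)$. A triangle inequality then bounds the quantile perturbation by $|T_\tau(P) - T_\tau(Q)| \le \sigma_P \sqrt{C_\tau} + M + \sigma_Q \sqrt{C_\tau}$, so everything reduces to showing that $\sigma_P$ stays bounded uniformly over $G \in \mathcal{F}_{(n,m)}$.

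This variance control is the crux, and the one place the stated bounds do not suffice: the lower bound in \Cref{lemma::bounds-on-W2} only sees the means, so on its own it cannot rule out $\sigma_P \to \infty$ while $W_2$ stays finite. I would close this gap with the standard fact that the $2$-Wasserstein distance controls the gap between standard deviations, $|\sigma_P - \sigma_Q| \le W_2(P,Q)$. This follows in one line from the coupling characterization of $W_2$: for an optimal coupling $(U,V)$ of $(P,Q)$, Minkowski's inequality gives $\sigma_P - \sigma_Q = \|U - \mu_P\|_2 - \|V - \mu_Q\|_2 \le \|(U - V) - (\mu_P - \mu_Q)\|_2 = \sqrt{\Var(U - V)} \le \sqrt{\E[(U-V)^2]} = W_2(P,Q)$. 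Hence $\sigma_P \le \sigma_Q + M$ uniformly over the contamination class.

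Combining, for every $G \in \mathcal{F}_{(n,m)}$ we obtain $|T_\tau(P) - T_\tau(Q)| \le (\sigma_Q + M)\sqrt{C_\tau} + M + \sigma_Q \sqrt{C_\tau}$, a bound independent of $G$. Therefore $\sup_{G \in \mathcal{F}_{(n,m)}} |T_\tau(G) - T_\tau(F_n)| < \infty$ for every $m < m^*$, which over the same contamination neighborhoods $\mathcal{F}_{(n,m)}$ means the quantile functional cannot break below contamination level $m^*/n$; that is, $\varepsilon^*(T_\tau, X^n) \ge m^*/n = \varepsilon^*_{W_2}(\mposterior, X^n)$. The main obstacle is precisely the variance bound of the third paragraph: once $\sigma_P$ is tamed by $W_2$, \Cref{lemma::bound-on-quantiles} delivers the conclusion mechanically, and the rest mirrors the posterior-mean argument.
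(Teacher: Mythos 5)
Your proof is correct and follows essentially the same route as the paper: both arguments reduce quantile stability to control of the first two posterior moments under contamination and then invoke \Cref{lemma::bound-on-quantiles}. The only cosmetic difference is that the paper controls the corrupted second moment via the coupling inequality $\E[Y^2]\le 2W_2^2(P,Q)+2\E[X^2]$, whereas you control the standard deviation via $|\sigma_P-\sigma_Q|\le W_2(P,Q)$; both are one-line consequences of the coupling characterization of $W_2$ and yield the same conclusion.
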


We again compare this result with the breakdown point of the standard empirical quantiles. For the usual empirical $\tau$--quantile, the finite-sample breakdown point equals $\min\{\tau,1-\tau\}$. In contrast, the breakdown point of the M-posterior quantile can be even higher. For instance, taking a Huber location posterior with an improper prior, the posterior breakdown point is equal to \( 1/2\). Hence, by the above result, all posterior quantiles have a breakdown point of at least \( 1/2\).

\subsection{Examples}

We conclude this section with some additional illustrative examples.
\begin{example}[Laplace posterior]
In this example, we consider a Laplace  likelihood model
\(
    f(x\mid \theta) \propto \exp (-|x-\theta|),
\)
which arises from the 
loss
\(
    \loss(x)=|x|
\)
with score function
$\psi(x)= \mathrm{sign}(x)\in[-1,1].$
The Laplace model is intuitively robust since even its maximum likelihood estimator, the empirical median, is very robust. We can formalize this in our notion of posterior breakdown point since the Laplace likelihood is defined by a  convex and symmetric loss with a bounded score function $\psi$. Therefore, the conditions of \Cref{thm::breakdown-point-convex-loss} are met and we conclude that the Laplace posterior exhibits a breakdown point of at least \( \tfrac{1}{2} \).
\end{example}
\begin{example}[Bayesian quantile regression]
\label{example::qr‐regression‐bp}
    Recall the setup of \Cref{subsetion::bayesian-quantile-regression}.
    For a fixed design matrix $X=(X_{1},\dots,X_{n})\in\R^{n\times d}$ and responses $Y=(y_{1},\dots,y_{n})\in\R^n$, we have
    an M-posterior
    \begin{equation*}
        \pi_n^{\loss_\tau} (\beta \mid X^n, Y^n) \propto
        \pi(\theta) \exp \left ( - \sum_{i=1}^n \loss_\tau (y_i - X_i^\top \beta)  \right ),
    \end{equation*}
    where \( \loss_\tau(x)\) is the check loss. Note that the check loss is convex, but not symmetric. Hence, we cannot apply \Cref{thm::breakdown-point-convex-loss} directly. In this setting, an argument similar to the proof of \Cref{lemma::posterior-moments-monotone} still applies, but the maximum bias to the odd moments is now achieved by taking all corrupted values to be either \( +\infty\) or \( -\infty\), depending on whether \( \tau \) is bigger than \( 1/2\). For the uninformative prior \( \pi = 1\) we can follow the same logic as in the proof of \Cref{thm::breakdown-point-convex-loss}. From this we conclude that the M-posterior can be broken if and only if
    \(
        (n-m) \min\{ \tau, 1-\tau \} \leq m \max \{\tau, 1-\tau\}.
    \)
    This results in the breakdown point of the M-posterior of \( \min\{ \tau, 1 - \tau\}.\)
\end{example}
\begin{example}[Reweighted posterior] 
We revisit the setting of \Cref{{subsection::Bayesian-Data-Reweighting}} and \Cref{example::bias-robust-reweighted-posterior} where \( X_i \mid \theta \sim N(\theta, 1) \) and \( \theta \sim \pi(\theta) \). 
Let the prior on weights be \( \pi_\alpha = \Gamma(\kappa, \lambda) \) with \( \kappa > 2\), and let the prior \( \pi(\theta) \) be bounded. 
Then the reweighted posterior has a breakdown point greater than \( \frac{1}{2}. \)
As before, a short computation reveals that the reweighted posterior is actually an M-posterior with loss function
\[
    \rho (x) = \kappa \left [ \log \left ( \lambda + \frac{x ^ 2}{2} + \frac{1}{2} \log(2\pi) \right ) - \log \lambda \right ],
\]
and
\[
    \psi(x) = \frac{\kappa x}{\lambda + \frac{x^2}{2} + \frac{1}{2} \log(2\pi)}.
\]
Now, note that \( \rho \) is symmetric, can trivially be rescaled to \( \rho(0) = 0\) and \( \rho \) is increasing towards both sides. 
Furthermore, we have that \( \lim_{|x| \rightarrow \infty} \rho (x) = \infty\) and \( \lim_{|x| \rightarrow \infty} \rho (x) / |x| = 0. \) 
Also, \( \psi \) is continuous and writing \( \lambda' = \lambda + \tfrac{1}{2} \log(2\pi)\), we see that
\[
    \psi'(x) = \frac{\kappa}{\lambda' + \frac{x^2}{2}} - \frac{\kappa x^2}{(\lambda' + \frac{x^2}{2}) ^ 2} = \frac{\kappa}{\lambda' + \frac{x^2}{2}} \left ( 1 - \frac{x^2}{\lambda' + \frac{x^2}{2}}\right ).
\]
It follows that from the origin, \( \psi \) is first non-decreasing as \(x \) grows and then non-increasing. 
Hence the reweighted posterior is a redescending M-posterior. 
To apply \Cref{thm::breakdown-point-redescending-loss}, it remains to check the finite-moment condition:
\[
    \int \pi(\theta)\,e^{-\rho(\theta)}\,\theta^2\,d\theta
    = \int \pi(\theta) \lambda^\kappa \bigl(\lambda + \tfrac{\theta^2}{2} + \tfrac{1}{2} \log(2\pi)\bigr)^{-\kappa}\theta^2\,d\theta.
\]
Since for large $|\theta|$, we have $(\lambda + \tfrac{\theta^2}{2} + \frac{1}{2} \log(2\pi))^{-\kappa}\theta^2 = O(|\theta|^{2-2\kappa})$, it follows that when $\kappa>2$ and \( \pi(\theta)\) is upper-bounded, the integral is finite.  
Thus the second moment is finite.
From \Cref{thm::breakdown-point-redescending-loss}, we conclude that the posterior breakdown point of this reweighted posterior is at least \( 1/2\).
\end{example}
\section{Numerical Examples}
\label{sec::numerical-examples}

In this section, we present three experiments designed to complement and illustrate the theoretical results developed in the previous sections. 
For reproducibility, the complete code is openly available at \href{https://github.com/JurajMarusic/M-posteriors}{\texttt{https://github.com/JurajMarusic/M-posteriors}}.

\subsection{Normal Location Model}

As a first example, we illustrate the differences in the posterior influence function between the standard posterior and the M-posterior induced by Huber's loss in a simple setting. For this, we fit a normal location model \(\P_\theta = N(\theta, 1)\) to a dataset \(X^n\).
A similar example has been studied in the context of robust KSD-Bayes in \cite{matsubara:knoblauch:briol:oates2022}.
To that end, recall the definition of the posterior influence function
$\textrm{PIF}(x_0; \theta,\rho, F_n) := \frac{\mathrm{d}}{\mathrm{d} \epsilon} \pi_n^\rho (\theta \mid F_{n,\epsilon,x_0}) \Big |_{\epsilon=0}.$

In \Cref{fig::normal-location-model}, we illustrate the behavior of the mapping 
\(
(x_0,\theta) \mapsto \bigl|\mathrm{PIF}(x_0;\,\theta,\rho,F_n)\bigr|,
\)
under different choices of loss functions.
The left panel shows the function as a curve in \(x_0\) for a fixed value of \(\theta\), whereas the right panel reverses the perspective by fixing a contamination point \(x_0\) and examining how the PIF varies across the parameter space in \(\theta\).

 We observe that when the posterior is constructed using  \emph{non-robust} losses, such as the quadratic (Gaussian) loss, the magnitude of the PIF grows without bound as \(x_0\) moves farther away from the bulk of the data. This is consistent with the theoretical result in \Cref{thm::global-bias-robust} and is analogous to the classical non-robustness of least squares and Gaussian likelihood--based inference.
By contrast, when we replace the quadratic loss with robust alternatives---such as Huber’s loss---the influence of extreme contamination is effectively capped.
In this case, the PIF remains bounded even as \(x_0 \to \infty\).
This boundedness is precisely what \Cref{thm::global-bias-robust} guarantees: robust M-posteriors limit the global bias introduced by a single adversarial contamination.
\begin{figure}[htb]
  \centering
  \includegraphics[width=\textwidth]{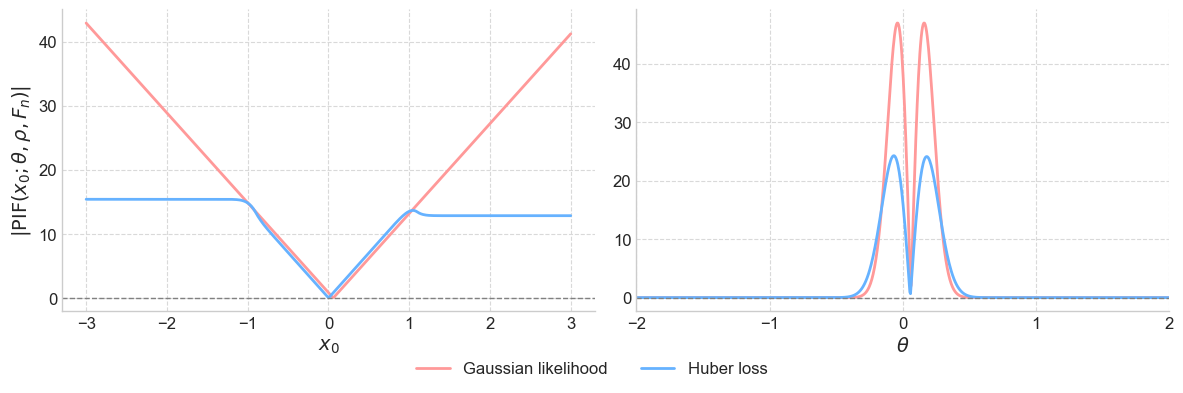}
  \caption{
    Comparison of PIF between standard Gaussian likelihood (red) and Huber loss (blue), computed on a sample of size $n=100$ with Huber threshold $c=1$. 
    \textbf{(Left)} PIF as a function of contamination point $x_0$, holding $\theta=0.1$ fixed; 
    \textbf{(Right)} PIF as a function of parameter $\theta$, holding $x_0=2.0$ fixed. 
    }

  \label{fig::normal-location-model}
\end{figure}

\subsection{Cluster Selection in a Mixture model}
We investigate the Dirichlet process mixture model (DPMM) \citep{murphy::mlbook} for clustering and density estimation, following a setup similar to \cite[Section 3.5]{wang:kucukelbir:blei2017}.
Specifically, we generated a two-dimensional dataset of \(N=2000\) observations from three skewed clusters with proportions \(\pi=(0.3,0.3,0.4)\), where each cluster was sampled from a skew-normal distribution with distinct shape, location, and scale parameters. 

The component means are \(\mu_1=(-2,-2)\), \(\mu_2=(3,0)\), \(\mu_3=(-5,7)\). 
The scale matrices are \(\Omega_1=\mathrm{diag}(4, 4)\), \(\Omega_2=\mathrm{diag}(4,16)\), \(\Omega_3=\mathrm{diag}(16,4)\). 
Lastly, the shape vectors are \(\alpha_1=(-5,0)^\top\), \(\alpha_2=(10,0)^\top\), \(\alpha_3=(15,0)^\top\). Each component has density \(f_j(x)=2 \phi(x;\mu_j,\Omega_j)\,\Phi (\alpha_j^\top \Omega_j^{-1/2}(x-\mu_j))\) \citep{skew-normal}, where \( \phi \) denotes the PDF of a standard normal random variable. Finally, the mixture density is \(f(x)=\sum_{j=1}^3 \pi_j f_j(x)\).

We then fit a Bayesian Gaussian mixture model with a Dirichlet process prior, allowing up to 30 diagonal components, using both the standard Gaussian likelihood and a robust Huber loss.
Posterior mixing proportions \(\{\omega_k\}\) were estimated and any components with \(\omega_k \leq  0.1\) were dissolved in order to filter out negligible ghost components. 

For a component with mean $\mu_k$ and diagonal covariance $\Sigma_k = \mathrm{diag}(\sigma_{k1}^2,\dots,\sigma_{kd}^2)$, the usual contribution in the Bayesian Gaussian mixture model is equal to
\begin{equation}    
\label{eq::numerical-examples-quadratic-part}
    -\tfrac{1}{2} \sum_{j=1}^d \Big( \frac{x_j - \mu_{kj}}{\sigma_{kj}} \Big)^2.
\end{equation}
We replace this squared residual by the Huber loss applied coordinate-wise to the standardized residuals 
$r_{kj} = (x_j - \mu_{kj})/\sigma_{kj}$. 
Thus, the quadratic part \eqref{eq::numerical-examples-quadratic-part} is replaced by 
\(
    - \sum_{j=1}^d \rho_c(r_{kj}).
\)
The results of both clustering methods are shown in \Cref{fig::mixture-model}.
The first panel on the left shows the true cluster assignments.
The middle panel shows the clustering assignments of the robust M-posterior induced by the Huber loss, as described above. 
We can see that this robust posterior recovers the true number of clusters, along with their respective mean locations. 
On the other hand, as seen in the right-most panel, the standard model using Gaussian likelihood incorrectly finds 5 clusters.

The findings of this experiment are consistent with the results in \cite{wang:kucukelbir:blei2017}, where they use the reweighting procedure to downweight the influence of outliers i.e. the data points that do not seem to match the Gaussianity assumption of the model.

\begin{figure}[htb]
  \centering
  \includegraphics[width=\textwidth]{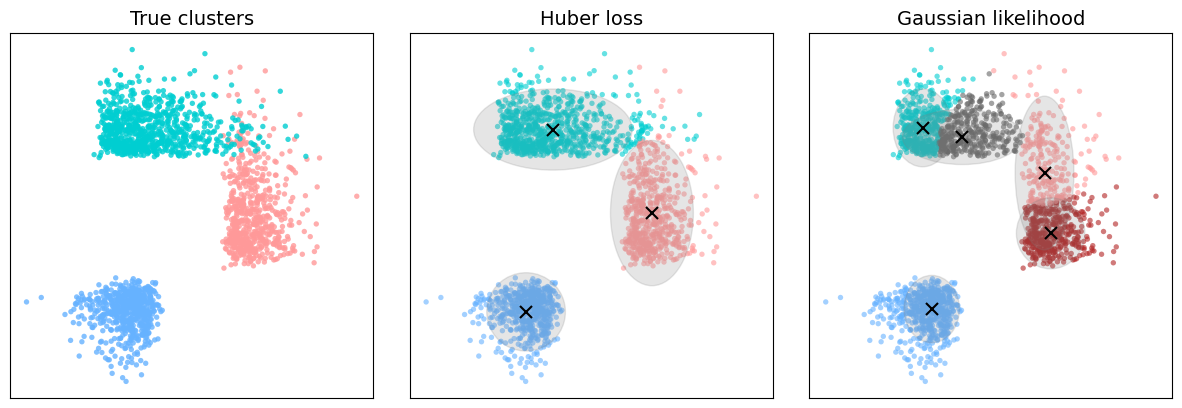}
    \caption{Side‐by‐side comparison of Dirichlet‐process Gaussian mixture fits on the same skewed, three‐cluster dataset. \textbf{(Left)} Data colored by true cluster labels. \textbf{(Center)} Posterior under a robust Huber loss ($c=1.0$), with active components outlined by shaded $2\sigma$ ellipsoids and their centers marked by “$\times$.” \textbf{(Right)} Posterior under the standard Gaussian likelihood, using the same visual conventions.}
  \label{fig::mixture-model}

\end{figure}

\subsection{Poisson Factorization for Recommendation Systems}
The MovieLens 1M dataset is a widely used benchmark in recommender‐systems research, containing one million ratings of 3,952 movies made by 6,040 users. Each rating is an integer from 1 (worst) to 5 (best).
In real‐world recommender‐system deployments, not all observed interactions faithfully reflect a single user’s tastes.
For example, friends or family often share streaming accounts, causing ratings and watch histories to mix multiple people’s preferences.  Similarly, “household” profiles on video platforms aggregate disparate viewing habits.
Such account sharing injects spurious signals—two users’ contrasting movie tastes end up conflated—which can mislead a pure collaborative‐filtering model into learning noisy or even contradictory latent factors. 
By deliberately corrupting a fraction of our users’ data—replacing their original movies and ratings with random values—we mimic this real‐world noise.

We model users and items in a Poisson factorization framework.
Let $U$ be the number of users, $M$ the number of movies, and $K$ the dimensionality of the latent factor space.
For each user $u\in\{1,\dots,U\}$ we introduce a nonnegative factor vector $\theta_u\in\R^K_{\geq 0}$ and for each item $m\in\{1,\dots,M\}$ a nonnegative factor vector $\beta_i\in\R^K_{\geq 0}$.
We place independent exponential priors over these latent variables
\(
    \theta_{u,k} \sim \textrm {Exp}(\lambda), 
    \,
    \beta_{m,k} \sim \textrm {Exp}(\lambda),
\)
for $k=1,\dots,K$.
Given these latent factors, the observed binary outcome $y_{u,m}$ is assumed to follow a Poisson distribution
$y_{u,m} \mid \theta_u, \beta_m \sim \textrm{Poisson}(\theta_u^\top\beta_m).$
Thus the model captures each user–item interaction by the inner product of their latent representations, while the exponential priors regularize the factor magnitudes.
Posterior inference proceeds by approximating or sampling from
\[
    \pi \Bigl (\{\theta_u\}_{u = 1}^U,\{\beta_m\}_{m=1}^M \mid \{y_{u, m}\}_{u=1, m=1}^{U, M} \Bigr) \propto
    \Bigl[\prod_{u = 1}^U e^{-\lambda \| \theta_{u} \|_1}\Bigr]
    \Bigl[\prod_{m = 1}^M e^{-\lambda \| \beta_{m} \|_1}\Bigr]
    \Bigl[\prod_{u = 1, m = 1}^{U, M} \frac{(\theta_u^\top \beta_m )^{y_{u,i}}}{y_{u,m}!}e^{-\theta_u^\top \beta_m }\Bigr].
\]

We continue by studying the reweighted posteriors, mimicking the example in \cite[Section 4]{wang:kucukelbir:blei2017}.
To that end, we introduce for each user \(u=1,\dots,U\) a latent weight \(\alpha_{u}\), drawn i.i.d. from a Gamma prior:
\(
    \pi_\alpha(\alpha_{u} ) = \Gamma(a, b).
\)
We then temper the likelihood of user \(u\) by raising it to the power \(\alpha_{u}\), as introduced in \Cref{subsection::Bayesian-Data-Reweighting}.  
Accordingly, the full joint posterior is
\[
\pi(\{\theta_u\},\{\beta_m\},\{\alpha_u\}\mid \{y_{u,m}\})
\;\propto\;
\prod_{u=1}^U e^{-\lambda\|\theta_u\|_1}\,\pi_\alpha(\alpha_u)
\prod_{m=1}^M e^{-\lambda\|\beta_m\|_1}
\prod_{u=1}^U\prod_{i=1}^M 
   \Bigl(\tfrac{(\theta_u^\top\beta_i)^{y_{u,i}}}{y_{u,i}!}\,
   e^{-\theta_u^\top \beta_i}\Bigr)^{\alpha_u}.
\]

For each user \(u\), let
\(
    A_u = \sum_{m=1}^M \log \pi (y_{u,m}\mid \theta_u,\beta_m).
\)
Then, since
\(
    \pi_\alpha(\alpha)
    =\tfrac{b^a}{\Gamma(a)}\alpha^{a-1} e^{-b\alpha},
\)
we have, by similar calculations as in \Cref{prop::bounded-PIF-of-reweighted-posteriors}, that
\begin{equation}
\label{eqn::poisson-factorization-form-of-m-posterior}
    \mposterior\Bigl (\{\theta_u\}_{u = 1}^U,\{\beta_m\}_{m=1}^M \mid \{y_{u, m}\}_{u=1, m=1}^{U, M} \Bigr) \propto
    \Bigl[\prod_{u = 1}^U e^{-\lambda \| \theta_{u} \|_1}\Bigr]
    \Bigl[\prod_{m = 1}^M e^{-\lambda \| \beta_{m} \|_1}\Bigr]
    \Bigl[ \prod_{u=1}^U (b - A_u)^{-a} \Bigr].
\end{equation}

Throughout the experiment, motivated by the choices of hyperparameters in \cite{wang:kucukelbir:blei2017}, we used \( \lambda = 10\), \(K = 10\), \( a = 1000\) and \(b= 3000\). Furthermore, we used the automatic differentiation variational inference (ADVI) \citep{ADVI} to perform the inference on the latent factors. 

We consider three corruption regimes—none, \(5\%\), and \(10\%\). For each model, we report the negative out-of-sample log-likelihood (NLL).
\begin{table}[H]
\centering
\caption{Negative log-likelihoods by corruption level and model type (lower is better).}
\label{tab:neg_ll}
\begin{tabular}{llll}
\toprule
{} & \multicolumn{3}{l}{Negative LL} \\
Model Type & M-posterior & Reweighted & Standard \\
Corruption Level & & & \\
\midrule
0.00 & 1.689 & 1.690 & 1.724 \\
0.05 & 1.727 & 1.725 & 1.739 \\
0.10 & 1.748 & 1.746 & 1.758 \\
\bottomrule
\end{tabular}
\end{table}

\Cref{tab:neg_ll} summarizes our results. The key observation is that the M-posteriors closely match the performance of the reweighted posterior while avoiding inference over latent weights, empirically confirming the calculation in \eqref{eqn::poisson-factorization-form-of-m-posterior} in a variational setting. Concretely, the reweighted approach requires inferring $U$ (users) $+$ $U$ (weights) $+$ $M$ (movies) $= 2U+M$ latent variables, whereas the M-posterior uses only $U+M$. Notably, the robust methods outperform the standard model even with no explicit corruption, suggesting mild contamination in the original data—a pattern also reported by \citet{wang:kucukelbir:blei2017}. That said, the gains are modest: the contamination magnitude is inherently capped because ratings lie in $\{1,\dots,5\}$, unlike settings where outliers can take arbitrarily extreme values.

We want to point out that the same experiment that was conducted in \cite{wang:kucukelbir:blei2017} used a Beta prior on the weights, instead of the Gamma which we use here. The reason for this is that, by using a Beta prior on the weights, there is no closed formula for the M-posterior in \eqref{eqn::poisson-factorization-form-of-m-posterior}.

\section{Discussion}
According to \cite{berger1994}, \emph{``Robust Bayesian analysis is the study of the sensitivity of Bayesian answers to uncertain inputs. The uncertain inputs are typically the model, prior distribution, or utility function, or some combination thereof''}, where by utility function, we can think of decision-theoretic loss function that yields a posterior functional of interest e.g.\ the posterior mean or quantiles.  In this work, we have emphasized what Huber described in the last chapter of \cite[page 327]{huber:ronchetti2009} as the prophylactic approach to robustness suggested by the Bayesian philosophy, \emph{``Make sure that uncertain parts of the evidence never have overriding influence on the final conclusions''}.  For our contribution, we adapted two classical quantitative measures of robustness, which gauge the influence that outlying evidence can have on final conclusions, for use in studying posterior distributions.  
Our results shed light into Berger's description of Bayesian robustness since they $(i)$ formalize the robust statistics intuition that the key input controlling the effect of outliers is the model or, more generally in our M-posterior framework, the M-estimation loss via the score function; $(ii)$ ellucitate the role of the prior for robustness,  which to some degree matches Huber's desiderata \emph{``robustness should prevent an uncertain prior from overwhelming the observational evidence''}; and, $(iii)$ perhaps surprisingly, also indicate that natural choices of utility functions do not play an important role for robustness. Indeed, the latter point downplays Huber's recommendation  that \emph{``the posterior distribution should be evaluated through utility functions that do not involve its extreme tails, for example in the one-dimensional case through a few selected quantiles, rather than through posterior expectations and variances''} \cite[page 329]{huber:ronchetti2009}. 
In Sections \ref{sec:IF_moments_quantiles} and \ref{sec::BP-posterior-moment-quantiles}, we saw that the robustness properties of posterior moments and quantiles stem directly from the robustness, or lack thereof, of the posterior distribution. This is in stark contrast to their standard frequentist counterparts -- sample moments and quantiles -- which behave very differently from a robustness standpoint. 

Much of the formal reasoning in the last chapter of \cite{huber:ronchetti2009} rests on linking the posterior mode and other posterior functionals to the classical maximum likelihood estimator (M-estimator) through the Bernstein–von Mises theorem, the same way we motivated the robustness properties of M-posteriors by first considering their concentration properties. This connection effectively washes out the influence of the prior, leading Huber's recommendations on Bayesian robust modeling to closely mirror standard M-estimator theory. 
We conceptually lean on this intuition further by recognizing that the same concentration idea also indicates that it makes little sense to study the robustness of these posteriors on the population level. Indeed, taking the number of samples to infinity, since in that case the posterior collapses to a point mass, defeats the whole purpose of Bayesian philosophy.  For this reason, we believe the focus should be on the finite sample analysis of the robustness properties of these posteriors.  Perhaps even more importantly, the posterior influence function and posterior breakdown point that we consider are model and paradigm agnostic; they make sense as mathematical quantifiers of the influence of small fractions of the data irrespective of the existence of a data generating process. 

Our work was greatly influenced by recent developments in Bayesian statistics that were published after Huber's chapter on Bayesian robustness. In particular, the work on generalized posteriors \citep{hooker:vidyashankar2014, bissiri:holmes:walker2016, ghosh:basu2016, miller2021,matsubara:knoblauch:briol:oates2022} relaxes the orthodox Bayesian update rule and naturally motivates a formal connection with the classical M-estimation theory.  We hope this work can serve as the basis for a more ambitious robustness study of modern Bayesian methods that are not covered by the theoretical framework considered in this paper. Probably some of the most natural and important methods to be studied in future work include high dimensional models, hierarchical latent variable models, and variational methods.
%


\appendix
\renewcommand{\thesection}{\Alph{section}}

\section{Proofs of main results} \label{appendix::proofs-of-main-results}
\subsection[Proofs of Sec. Asymptotics]{Proofs of \Cref{Sec::Asymptotics}}
\noindent \textbf{Proof of \Cref{thm::BvM}}

\begin{proof}
    This proof closely follows the proof of Theorem 1 in \cite{avellamedinaetal2021}. Our proof is adapted to the multiple $\mathbf{\alpha}$ framework and an arbitrary loss function \( \loss \). In addition to that, we use the Weighted M-LAN assumption instead of the (usual) LAN assumption on the model.
    Since $\theta^*$ belongs to the interior of $\Theta$, there exists $\delta > 0$ such that the open ball $B_{\theta^*}(\delta)$ around $\theta^*$ belongs to $\Theta$. Furthermore, we can choose $\delta$ such that $\pi$ is continuous and positive on $B_{\theta^*}(\delta)$.
    Next, for any compact set $K_0 \subset \mathbb{R}^p$, we have that $\theta^* + \frac{h}{\sqrt{n}} \in B_{\theta^*}(\delta)$ whenever $n \geq N_0$ for some $N_0$ depending on the set $K_0$ and parameter $\delta$.
    For vectors $g, h \in K_0$, the following random variable will be used throughout the proof:
    \begin{equation}
    \label{eq::BvM-def-of-fn}
        f_n(g,h) \equiv \left \{ 1 - \frac{\phi_n(h)}{\pi^{\mathrm{\mathrm{WMLAN}}}_{n}(h \mid \wempirical)} \frac{\pi^{\mathrm{\mathrm{WMLAN}}}_{n}(g \mid \wempirical)}{\phi_n(g)}\right \} ^ +,
    \end{equation}
    where $\phi_n(h) \equiv n ^ {- \frac{1}{2}} \phi (h \mid \Delta^{\boldsymbol{\alpha}}_{n, \theta*}, V_{\theta^*}^{-1} / \overline{\alpha}_n)$ 
    and
    $\pi^{\mathrm{WMLAN}}_{n}(h \mid \wempirical) \equiv n ^ {- \frac{1}{2}} \pi_{n}(\theta ^ * + \frac{h}{\sqrt{n}} \mid \wempirical)$,
    are scaled versions of desired densities. Furthermore, define 
    $\pi_n \equiv n ^ {- \frac{1}{2}} \pi (\theta ^ * + \frac{h}{\sqrt{n}})$,
    the density of the prior distribution of the transformation $\sqrt{n}(\theta - \theta ^ *)$.
    With this in mind, note that $f_n$ from above is well defined on $K_0 \times K_0$ for all $n > N_0$.
    Let $\overline{B}_{\textbf{0}}(r_n)$ denote a closed ball of radius $r_n$ around \textbf{0}. Since $d_{\mathrm{TV}} \leq 1$, for any sequence $r_n$ and $\eta > 0$, denoting 
    \[
    A_n = \Big \{ \underset{g,h \in \overline{B}_{\textbf{0}}(r_n) }{\sup} f_n(g, h) \leq \eta \Big \},
    \]
    we have
    \begin{equation}
    \label{eqn::BvM-proof-upper-bound-on-TV}
        \E_{P_0} \left [ d_{\mathrm{TV}} (\pi^{\mathrm{WMLAN}}_{n}(\cdot \mid \wempirical), \phi_n(\cdot) ) \right ]
        \leq \E_{P_0} [ d_{\mathrm{TV}} (\pi^{\mathrm{WMLAN}}_{n}(\cdot \mid \wempirical), \phi_n(\cdot) ) \mathbf{1} \{ A_n \} ] 
        + \P_{P_0} (A_n^c)
    \end{equation}
    Following the exactly same arguments as in \cite{avellamedinaetal2021}, we conclude that there exists $\Tilde{N}(\eta, \epsilon)$ such that for all $n > \Tilde{N}(\eta, \epsilon)$,
    \begin{equation}
    \label{eqn::BvM-proof-bound-first-term}
        \E_{P_0} [ d_{\mathrm{TV}} (\pi^{\mathrm{WMLAN}}_{n}(\cdot \mid \wempirical), \phi_n(\cdot) ) \mathbf{1} \{ A_n \} ] \leq \eta + 2 \epsilon
    \end{equation}
    Regarding the second term on RHS in \eqref{eqn::BvM-proof-upper-bound-on-TV}, \Cref{lemma:ballconvergence} shows that for a given $\eta, \epsilon > 0$, there exists a sequence $r_n \rightarrow + \infty$ and $N(\eta, \epsilon)$ such that, for all $n > N(\eta, \epsilon)$, 
    \begin{equation}
    \label{eqn::BvM-proof-bound-second-term}
        \P_{P_0} (A_n^c) = \P_{P_0} \Big ( \underset{g,h \in \overline{B}_{\textbf{0}}(r_n) }{\sup} f_n(g, h) > \eta \Big) \leq \epsilon.
    \end{equation}
    In the proof of \Cref{lemma:ballconvergence}, we use the stochastic Weighted M-LAN condition defined in Assumption \ref{assump::WMLAN}.
    Finally, from \eqref{eqn::BvM-proof-upper-bound-on-TV}, using bounds in \eqref{eqn::BvM-proof-bound-first-term} and \eqref{eqn::BvM-proof-bound-second-term}, for all $n > \max \{ N(\eta, \epsilon), \Tilde{N}(\eta, \epsilon) \}$,
    \begin{equation}
    \label{eqn::last-line-of-BvM-theorem}
    \E_{P_0} \left [ d_{\mathrm{TV}} \bigl (\pi^{\mathrm{WMLAN}}_{n,\boldsymbol{\alpha}}(\cdot \mid  \wempirical), \, \phi_n(\cdot) \bigr ) \right ] \leq (\eta + 2 \epsilon) + \epsilon = \eta + 3 \epsilon.
    \end{equation}
    Applying Markov's inequality gives the desired result.
\end{proof} 
\subsection{Proofs of Section \ref{sec::PIF}}
\noindent \textbf{Proof of \Cref{thm::global-bias-robust}}

\begin{proof}
    In both cases, we will show that the bound in \Cref{lemma::upper-bound-on-PIF} is finite, implying the uniform bound on the posterior influence function.

    \textbf{Case 1:} We assume the prior \( \pi(\theta) \) has a finite first moment and that \( \sup_{\theta \in \Theta} \pi(\theta) | \theta | < \infty \).
    First, since the loss \( \loss \) is bounded from below, we have that \( \loss_- := \inf_{\theta \in \Theta, x \in \mathcal{X}} \loss(x, \theta) > -\infty \). Therefore, we have
    \begin{equation*}
        \mposterior(\theta \mid F_n) \leq \frac{1}{Z} \exp(-n \loss_- ) \pi(\theta),
    \end{equation*}
    where \( Z = \int \exp \left ( - n \E_{F_n} [\loss (\theta, X)] \right ) \pi(\theta) \, d\theta \) is the normalizing constant. Therefore, since the prior \( \pi(\theta)\) is upper-bounded, we have that 
    \[ 
        \sup_{\theta \in \Theta} \mposterior(\theta \mid F_n) \leq \frac{1}{Z}\exp(-n \loss_- ) \sup_{\theta \in \Theta} \pi(\theta) < \infty. 
    \] 
    Moreover, since the prior \( \pi (\theta) \) exhibits a first moment, we conclude that
    \[
        \int \mposterior(\theta' \mid F_n) |\theta' | \, d\theta' \leq
        \frac{1}{Z} \exp(-n \loss_- ) \int \pi(\theta') |\theta' | \, d\theta'  < \infty.
    \]
    Finally, from the fact that \( \sup_{\theta \in \Theta} \pi(\theta) | \theta | < \infty \), we have
    \[
        \sup_{\theta \in \Theta} \mposterior(\theta \mid F_n) \left | \theta \right | \leq 
        \frac{1}{Z} \exp(-n \loss_- ) \left ( \sup_{\theta \in \Theta} \pi(\theta) | \theta | \right ) < \infty.
    \]
   Combining all this together, we conclude that the upper bound in \Cref{lemma::upper-bound-on-PIF} is finite:
    \[
    \sup_{\theta \in \Theta} 2B n \mposterior(\theta \mid F_n) \left ( | \theta | + \int \mposterior(\theta' \mid F_n) |\theta'| \, d\theta' \right ) < \infty.
    \]
    Hence, \( \left |\textrm{PIF}(x_0; \theta,\rho, F_n) \right | \) is uniformly bounded, and \( \mposterior(\cdot \mid F_n) \) is uniformly B-robust, as desired. We continue with studying the second case.

    \textbf{Case 2:} We assume that the loss function \(\loss\) is convex in \( \theta \) with \( \lim_{| \theta |\rightarrow \infty} \loss(x, \theta) = \infty \). We will bound the same three terms as in the previous case. 
    First, note that since the loss function \(\loss\) is convex in \( \theta \) with \( \lim_{| \theta |\rightarrow \infty} \loss(x, \theta) = \infty \), we have that the empirical average \( L_n(\theta) = \tfrac{1}{n} \sum_{i=1}^n \loss (X_i, \theta) \) is also convex and coercive. Hence, there exists constants \( a > 0\) and  \(b \in \R\) such that for all \(\theta\),
    \[
    L_n(\theta) \geq a |\theta|- b.
    \]
    As before, let \( Z = \int \exp \left ( - n \E_{X \sim F_n} [\loss (X,\theta)] \right ) \pi(\theta) \, d\theta \) be the normalizing constant and notice that we have \( Z \leq \sup_{\theta \in \Theta} \pi(\theta) \int  \exp(-n a | \theta' | +nb) \, d\theta'  < \infty\).
    Using the above and the fact that the prior is upper-bounded,
    \[
        \int \mposterior(\theta' \mid F_n) |\theta' | \, d\theta' \leq
        \frac{1}{Z} \sup_{\theta \in \Theta} \pi(\theta)  \int |\theta'| \exp(-n a | \theta' | +nb) \, d\theta'  < \infty.
    \]
    In the same fashion, we derive
    \[
        \sup_{\theta \in \Theta} \mposterior(\theta \mid F_n) \left | \theta \right | \leq
        \frac{1}{Z} \sup_{\theta' \in \Theta} \pi(\theta') \sup_{\theta \in \Theta} \bigl ( |\theta| \exp(-n a | \theta | +nb) \bigr ) < \infty,
    \]
    and
    \[
        \sup_{\theta \in \Theta} \mposterior(\theta \mid F_n) \leq
        \frac{1}{Z} \sup_{\theta' \in \Theta} \pi(\theta') \sup_{\theta \in \Theta} \bigl (  \exp(-n a | \theta | +nb) \bigr ) < \infty. 
    \]
    Finally, as before, combining this, we conclude that the upper bound in \Cref{lemma::upper-bound-on-PIF} is finite:
    \[
    \sup_{\theta \in \Theta} 2B n \mposterior(\theta \mid F_n) \left ( | \theta | + \int \mposterior(\theta' \mid F_n) |\theta'| \, d\theta' \right ) < \infty.
    \]
    Hence, \( \left |\textrm{PIF}(x_0; \theta,\rho, F_n) \right | \) is uniformly bounded, and \( \mposterior(\cdot \mid F_n) \) is uniformly B-robust, as desired. This completes the proof.
\end{proof}
\noindent \textbf{Proof of \Cref{prop::unbounded-score-implies-non-robust-posterior}}

\begin{proof}
    Recall that in \eqref{eqn::exact-PIF} we showed the posterior influence function can be written as
    \begin{equation*}
        \PIF(x_0; \,\theta, \rho, F_n) = n \mposterior(\theta \mid F_n) \left ( g(x_0, \theta) - \int \mposterior(\theta' \mid F_n) g(x_0, \theta') \, d\theta' \right ).
    \end{equation*}
    where 
    \(
        g(x,\theta) := \E_{F_n} [ \closs(X, \theta) ] - \closs(x, \theta)
    \)
    and \( \closs ( x, \theta) := \loss(x, \theta) - \loss(x, 0)\). We have
    \[
    \begin{split}
        h(x_0, \theta) 
        &=
        g(x_0, \theta) - \int \mposterior(\theta' \mid F_n) g(x_0, \theta') \, d\theta' \\
        &= 
        \E_{F_n} [ \closs(X, \theta) ] - \closs(x_0, \theta)
        -
        \int \mposterior(\theta' \mid F_n) \Bigl ( \E_{X \sim F_n} [ \closs(X, \theta') ] - \closs(x_0, \theta') \Bigr ) d\theta' \\
        &=
        \int \mposterior(\theta' \mid F_n) \Bigl ( \closs(x_0, \theta') - \closs(x_0, \theta) \Bigr ) \, d\theta' + C(\theta) \\
        &=
        \int \mposterior(\theta' \mid F_n) \Bigl ( \loss(x_0, \theta') - \loss(x_0, \theta) \Bigr ) \, d\theta' + C(\theta) \\
        &\geq
        \int \mposterior(\theta' \mid F_n) \score(x_0, \theta) (\theta' - \theta) \, d\theta' + C(\theta) \\
        &=
        \score(x_0, \theta) \int \mposterior(\theta' \mid F_n) (\theta' - \theta) \, d\theta' + C(\theta),
    \end{split}
    \]
    where we used the convexity of the \( \loss(x, \theta) \) to derive the inequality.
    Now pick \( \theta^* \neq \int \mposterior(\theta' \mid F_n) \theta' \, d\theta' \) with \( \pi(\theta^*) > 0\). Note that such \( \theta^* \) exists since the prior is non-degenerate. If \( \theta^* < \int \mposterior(\theta' \mid F_n) \theta' \, d\theta' \), we have
    \[
        \lim_{x \rightarrow +\infty} \score(x_0, \theta^*) \int \mposterior(\theta' \mid F_n) (\theta' - \theta^*) \, d\theta' = +\infty.
    \]
    In the case of \( \theta^* > \int \mposterior(\theta' \mid F_n) \theta' \, d\theta' \), by considering the other tail, we have
    \[
        \lim_{x \rightarrow - \infty} \score(x_0, \theta^*) \int \mposterior(\theta' \mid F_n) (\theta' - \theta^*) \, d\theta' = +\infty.
    \]
    Combining these two limits with the above inequality, we conclude
    \[
        \sup_{x_0} \bigl | h(x_0, \theta^*) \bigr |
        = 
        \infty.
    \]
    This in turn implies that \(\sup_{x_0} \bigl |\PIF(x_0; \,\theta, \rho, F_n) \bigr | = \infty\), showing the claim.
\end{proof}
\noindent \textbf{Proof of \Cref{prop::connections-to-influence-functions-of-posterior-moments-and-quantiles}}

\begin{proof}
    \textbf{Result (1).}  We first show that by the dominated convergence theorem, we can differentiate under the integral sign to obtain
    \begin{equation}
    \label{eq:IF_PIF}
        \textrm{IF}(x_0; T_k, F_n) =  \frac{\partial}{\partial \epsilon} T_k ( F_{n,\epsilon,x_0}) \Big |_{\epsilon=0} =
        \int_\Theta  \frac{\partial}{\partial \epsilon} \theta^k \mposterior(\theta \mid F_{n,\epsilon,x_0}) \Big |_{\epsilon=0} \, d\theta =
        \int_\Theta \theta^k \textrm{PIF}(x_0; \theta,\rho, F_n) \, d\theta.
    \end{equation}
    We will argue that for some constant $c>0$ and for all $\epsilon \in [0, \tfrac{1}{2}]$, we have
    \begin{equation}
    \label{eq:dom}
    \theta^k \mposterior(\theta \mid F_{n,\epsilon,x_0}) \leq c \, \theta^k \pi(\theta) \exp(-n \rho_-),
    \end{equation}
    where we have used the definition \( \loss_- := \inf_{\theta \in \Theta, X \in \mathcal{X}} \loss(X, \theta) > -\infty \), which follows as the loss \( \loss \) is assumed to be bounded from below. In particular, the right side of \eqref{eq:dom} is integrable since the prior $\pi$ is assumed to have a finite $k^{th}$ moment, justifying the interchange in \eqref{eq:IF_PIF}.
    
    The result in \eqref{eq:dom} can be justified as follows. First notice by definition,
    \begin{equation}
    \label{eq:dom1} 
    \mposterior(\theta \mid F_{n,\epsilon,x_0}) 
    =
    \frac{1}{Z_\epsilon } \pi(\theta) \exp \Bigl ( -n(1-\epsilon) \E_{F_n} \rho(X, \theta) - n\epsilon\rho(x_0, \theta) \Bigr ) 
    \leq
    \frac{1}{Z_\epsilon } \pi(\theta) \exp(-n \rho_-),
    \end{equation}
    where \( Z_\epsilon\) is the normalizing constant and the result in \eqref{eq:dom} follows if we show $Z_{\epsilon} \geq c > 0$ for any $\epsilon \in [0, \tfrac{1}{2}]$. Indeed,
    \[
    \begin{split}
    Z_\epsilon 
    =
    \int \pi(\theta) \exp  \Bigl( -n(1-\epsilon) &\E_{F_n} \rho(X, \theta) - n\epsilon\rho(x_0, \theta) \Bigr) \, d\theta  \\
    &\geq
    \int \pi(\theta) \exp \bigl ( -n \E_{ F_n} \rho(X, \theta) - \tfrac{1}{2}n \loss(x_0, \theta) \bigr) \, d\theta >0.
    \end{split}
    \]

    Recall the following bound on the posterior influence function in \eqref{eqn::bound-on-PIF-2}:
    \begin{equation}
    \begin{split}
    \label{eq:PIF_bound_1}
        |\textrm{PIF}(x_0; \theta, \rho,F_n)| &\leq 2B n \mposterior(\theta \mid F_n) \left ( | \theta | + \int \mposterior(\theta' \mid F_n) |\theta'| \, d\theta' \right ),
    \end{split}
    \end{equation}
    where \( B = \sup_{X \in \mathcal{X}} \sup_{\theta \in \Theta} \left | \psi(X,\theta) \right | < \infty\). Using the bound in \eqref{eq:PIF_bound_1} and the result in \eqref{eq:IF_PIF}, we have 
    \[
        \bigl | \mathrm{IF}(x_0; T_k,F_n) \bigr | 
        \leq
        \int |\theta| ^k |\textrm{PIF}(x_0; \,\theta,\rho, F_n)| \, d\theta 
        \leq
        2B n \int |\theta|^k \mposterior(\theta \mid F_n)  \bigl (  |\theta| +A \bigr ) \, d\theta 
        <
        \infty,
    \]
    where the final inequality uses that  the prior \( \pi \) has a finite \( (k+1) \)-th moment, and the following: by \eqref{eq:dom1} and the assumption that the prior admits the first moment, we have
    \begin{equation}
    \label{eq:A_def}
        A := \int \mposterior(\theta' \mid F_n) |\theta'| \, d\theta' \leq \frac{1}{Z_0}\exp(-n \rho_-)\int    |\theta'|\pi(\theta') \, d\theta' < \infty.
    \end{equation}
    This proves the first claim.
    
    \textbf{Result (2).} Regarding the second part, note that in \eqref{eqn::influence-function-of-posterior-quantile} we showed that, for any \( \tau \in (0, 1) \),
    \[
    \mathrm{IF}\bigl(x_0;\,T_\tau,F_n\bigr)
        =
        -
        \frac{\displaystyle
          \int_{-\infty}^{T_\tau(F_n)}
            \PIF\bigl(x_0; \,\theta', \rho,F_n\bigr)\,d\theta'
        }{
          \displaystyle \mposterior\bigl(T_\tau(F_n)\mid F_n\bigr)
        }.
    \]
    Therefore, using the bound in \eqref{eq:PIF_bound_1} and the definition of $A$ in \eqref{eq:A_def}, we have
    \[
        \bigl | \mathrm{IF}\bigl(x_0;\,T_\tau,F_n\bigr) \bigr | 
        \leq
        \frac{\displaystyle
          \int_{-\infty}^{\infty}
            \bigl | \PIF\bigl(x_0; \theta',\rho, F_n\bigr) \bigr | \,d\theta'
        }{
          \displaystyle \mposterior\bigl(T_\tau(F_n)\mid F_n\bigr)
        }
        \leq
        \frac{\displaystyle
          2B n \int_{-\infty}^{\infty} \mposterior(\theta' \mid F_n)  (  |\theta'| +A ) \, d\theta'
        }{
          \displaystyle \mposterior\bigl(T_\tau(F_n)\mid F_n\bigr)
        } <\infty,
    \]
    where in the last inequality we used the fact that the loss \(\loss\) is lower bounded and that the prior has a finite first moment, and hence the M-posterior \( \mposterior(\cdot \mid F_n)\) also admits the first moment. This finishes the proof.
\end{proof}
\noindent \textbf{Proof of \Cref{prop::bounded-PIF-of-reweighted-posteriors}}

\begin{proof}
    By integrating the latent weights, we have that the reweighted posterior is equal to
    \begin{align*}
    \pi_\alpha(\theta\mid F_n) \propto \pi(\theta) \prod_{i=1}^n \int \pi_\alpha(\alpha_i) f(X_i \mid \theta)^{\alpha_i} \, d\alpha_i
  & = 
    \pi(\theta)\prod_{i=1}^n
    \int_0^\infty
    e^{-\alpha_i\,g(X_i,\theta)}
    \frac{\lambda^k}{\Gamma(k)}\alpha_i^{k-1}e^{-\lambda\alpha_i}
    \, d\alpha_i \\
    &\propto
    \pi(\theta)\prod_{i=1}^n(\lambda + g(X_i,\theta))^{-k}.
    \end{align*}
    Therefore, the reweighted posterior is actually an M-posterior with loss function
    \[
    \loss(X, \theta) = k \,\log \bigl(\lambda + g(X,\theta) \bigr).
    \]
    The corresponding score function is 
    \[
    \psi(X, \theta) = \frac{k \, \nabla_\theta g(X,\theta)}{\lambda + g(X, \theta)}.
    \]
    Now, note that since by assumption \( (X, \theta) \mapsto \log  [ g(X, \theta)  ]\) is \(L\)-Lipschitz in \( \theta\) for all \(X\), we have that
    \[
    \sup_{\theta, x} \left |\psi(x,\theta)\right|\leq k\cdot\sup_{\theta, x} \left | \frac{\nabla_\theta g(x, \theta)}{g(x, \theta)}  \right | \leq kL.
    \]
    Finally, by assumptions on the prior, \Cref{thm::global-bias-robust} implies that the reweighted posterior \( \pi_\alpha(\cdot \mid F_n) \) is uniformly B-robust, as required.
\end{proof}

\subsection{Proofs of Section \ref{section::posterior-BP}}
\noindent \textbf{Proof of \Cref{thm::breakdown-point-convex-loss}}

\begin{proof}
    Note that by replacing \( \loss(X_i - \theta)\) with \( \loss(X_i - \theta) - \loss(X_i)\), the M-posterior remains the same:
    \begin{equation}
    \label{eq:diff}
        \frac{\pi(\theta) \exp \left ( - \sum_{i=1}^n [ \loss(X_i - \theta) - \loss(X_i)]  \right )}{\int \pi(\theta) \exp \left ( - \sum_{i=1}^n [ \loss(X_i - \theta) - \loss(X_i)]  \right ) \, d\theta}
        =
        \frac{\pi(\theta) \exp \left ( - \sum_{i=1}^n\loss(X_i - \theta)  \right )}{\int \pi(\theta) \exp \left ( - \sum_{i=1}^n \loss(X_i - \theta)   \right ) \, d\theta}.
    \end{equation}
    Furthermore, since \(\loss\) is convex, \( \loss(x - \theta) - \loss(x)\) is decreasing as a function of \(x\). Setting \( c = \sup \psi(x) < \infty\), by the mean value theorem:
    \[
        \lim_{x \rightarrow \pm \infty} \loss(x - \theta) - \loss(x) = \mp c \, \theta.
    \]
    At the same time, we have that
    \[
        \lim_{\theta \rightarrow \pm \infty} \frac{\loss (x - \theta) - \loss(x)}{| \theta |}
        =
        \lim_{\theta \rightarrow \pm \infty} - \frac{1}{| \theta |} \int_0^\theta \psi(x-u) \, du
        =
        c.
    \]
    Recall that we defined \( T_k\) to be the \(k\)-th moment of M-posterior \( \mposterior (\cdot \mid F_n) \) in \eqref{eq:Tk}. 
    In the same fashion, let  \( \widetilde T_k\) be the \(k\)-th moment of the corrupted M-posterior \( \mposterior (\cdot \mid \P_{(n, m)}) \), where \( \P_{(n, m)} \) is the empirical distribution of the corrupted sample \( \corruptedsample\). 
    From \Cref{lemma::posterior-moments-monotone} we conclude that the maximum bias to the mean that can be caused by contaminating \( m \) observations occurs if we move all contaminated points to \( + \infty\). 
    Hence the mean achieving the biggest bias is equal to
    \begin{equation}
        \widetilde T_1 = \frac{\int \theta \pi(\theta) \exp \left (-\sum_{i=1}^{n-m} \rho(X_i - \theta) + mc\theta \right) \, d\theta}{\int \pi(\theta) \exp \left (-\sum_{i=1}^{n-m} \rho(X_i - \theta) + mc\theta\right) \, d\theta}.
    \end{equation}
    At the same time, again from \Cref{lemma::posterior-moments-monotone}, we conclude that the maximum bias to the second moment can be caused if all contaminated observations are chosen from the set \( \{ -\infty, +\infty \}. \) Let \(m_{\infty} \leq m\) be the number of points chosen to be equal to \( +\infty\).
    Then the second moment achieving the biggest bias is of the form
    \begin{equation}
        \widetilde T_2 = \frac{\int \theta^2 \pi(\theta) \exp \left (-\sum_{i=1}^{n-m} \rho(X_i - \theta) + m_{\infty} c\theta - (m - m_{\infty})c \theta \right) \, d\theta}{\int \pi(\theta) \exp \left (-\sum_{i=1}^{n-m} \rho(X_i - \theta) + m_{\infty} c\theta - (m - m_{\infty})c \theta\right) \, d\theta}.
    \end{equation}
    Note that if we show that both \( \widetilde T_1\) and \(\widetilde T_2\) are finite for some \( m \leq n\), from the upper-bound in \Cref{lemma::bounds-on-W2}, we conclude that \( \sup_{\, F_{(n,m)}} W_2^2 (\mposterior(\cdot \mid F_{(n,m)}), \, \mposterior (\cdot \mid F_n ) ) < \infty, \) and hence \(\varepsilon^*_{W_2}(\pi^{\rho}_n, X^n) \geq \frac{m+1}{n}\). 
    On the other hand, if we show that \( \widetilde T_1\) is infinite, from the lower-bound in \Cref{lemma::bounds-on-W2}, we can conclude that \(\varepsilon^*_{W_2}(\pi^{\rho}_n, X^n) \leq \frac{m}{n}.\) 
    Note that we are not necessarily using the same distribution $F_{(n,m)}$ in finding the worst case bias for the first and second moments. We are allowed to do this because the upper-bound in \Cref{lemma::bounds-on-W2} can trivially be further bounded by using \( \sup(f+g) \leq \sup(f) + \sup(g).\) We continue with studying the behavior under different prior scenarios discussed above:
    
    \textbf{Uninformative prior.} In this setting $\pi=1$ so that we have 
    \[
        \widetilde T_1 = \frac{\int \theta \exp \left (-\sum_{i=1}^{n-m} \rho(X_i - \theta) + mc\theta \right) \, d\theta}{\int  \exp \left (-\sum_{i=1}^{n-m} \rho(X_i - \theta) + mc\theta\right) \, d\theta}.
    \]
    For large \( |\theta|\), the exponent in the above formula is \( c[ - (n-m)|\theta| (1 + o(1)) + m\theta] \); hence, the above is finite if and only if \( n - m > m\), or equivalently \( m < n / 2\).

By the same arguments, we conclude that \( \widetilde T_2 \) is finite whenever \( m < n / 2\), for all possible choices of \( m_\infty\). Indeed,  for large \( |\theta|\), the exponent in the above formula is \( c[ - (n-m)|\theta| (1 + o(1)) + m_{\infty}\theta - (m- m_{\infty})\theta] = c[ - (n-m)|\theta| (1 + o(1)) - (m- 2m_{\infty})\theta] \) and so \( \widetilde T_2 \) is finite whenever $n - m > 2 m_{\infty} - m$ or $m_{\infty} < \frac{n}{2}$.
We conclude that
    \( \varepsilon^*_{W_2}(\pi^{\rho}_n, X^n) = \frac{1}{2}.\)

    \textbf{Super-exponential prior.} In this case \(\pi\propto \exp(-h) \), where \( h \) is convex, symmetric and has a bounded derivative \( h'\) and
    \begin{equation}
    \label{eqn::convex-case-corrupted-mean}
        \widetilde T_1 = \frac{\int \theta \exp \left (-h(\theta) -\sum_{i=1}^{n-m} \rho(X_i - \theta) + mc\theta \right) \, d\theta}{\int  \exp \left (- h(\theta) -\sum_{i=1}^{n-m} \rho(X_i - \theta) + mc\theta\right) \, d\theta}.
    \end{equation}
    By denoting \( \sup h'(\theta) =:  c_h  < \infty\), we have, for large \( | \theta |\), that the exponent in the above formula is equal to 
    \[ - (c_h + c(n-m) ) |\theta| (1 +o(1)) + m c \theta . \]
    Hence, \( \widetilde T_1\) is finite if and only if \( c_h + c( n-m ) >c \, m \), equivalently \( m < n/2 + c_h/(2c) \). By the same arguments as in the previous case concerning the second moment, 
   \( \varepsilon^*_{W_2}(\pi^{\rho}_n, X^n) \ge \frac{1}{2}\) and \( \varepsilon^*_{W_2}(\pi^{\rho}_n, X^n) \downarrow \frac{1}{2} \) as \( n \rightarrow \infty \).

    \textbf{Sub-exponential prior.} Here we have  a convex and symmetric \( h\), but unbounded derivative \( h'\). Then for large \( | \theta | \), the exponent in \eqref{eqn::convex-case-corrupted-mean} is
    \[ -c_h(\theta) | \theta | + c \left [ - (n-m)|\theta| (1 + o(1)) + m\theta \right ]   \]
    for some function \( c_h(\theta)\) with \( \lim_{\theta \rightarrow \pm \infty} c_h(\theta) = \infty\). Since \( c_h(\theta) > m \, c\) eventually, we have that \( \widetilde T_1\) is finite for any choice of \( m \leq n\). Since the same arguments can be applied to \( \widetilde T_2 \), the M-posterior cannot be broken.
\end{proof}
\noindent \textbf{Proof of \Cref{thm::breakdown-point-redescending-loss}}

\begin{proof}
    Let \( X^n\) be the original sample, and \( \corruptedsample\) be the corrupted sample differing from the original one in at most \( m \) entries.
    Let  \( \widetilde T_k\) be the \(k\)-th moment of the corrupted M-posterior \( \mposterior (\cdot \mid \P_{(n, m)}) \): as in \eqref{eq:diff},
    \[
        \widetilde T_k = 
        \frac{\int \theta^k \pi(\theta) \exp \left (  - \Delta_{\corruptedsample} (\theta) \right )  \, d\theta}{\int \pi(\theta) \exp \left (  -\Delta_{\corruptedsample} (\theta) \right ) \, d\theta}.
    \]
    Now, for \( k \in \{ 1, 2\}\) and \( 2m < n\), it follows from \Cref{lemma::huber-paper-redescending-loss} and the integrability assumption that
    \begin{equation*}
    \begin{split}
        \left | \widetilde T_k \right | 
        &\leq 
        \frac{\int  |\theta|^k \pi(\theta) \exp \left (  - \Delta_{\corruptedsample} (\theta) \right ) \, d\theta}{\int \pi(\theta) \exp \left (  -\Delta_{\corruptedsample} (\theta) \right ) \, d\theta} 
        \leq 
        \frac{\int | \theta |^k \pi(\theta) \exp \left (  - ( n - 2m ) \rho (\theta) + C \right )  \, d\theta}{\int \pi(\theta) \exp \left (  -  n \rho (\theta) - C\right ) \, d\theta} < \infty.
    \end{split}
    \end{equation*}
    Finally, by considering the upper bound on the Wasserstein-2 distance in \Cref{lemma::bounds-on-W2}, we have the first part of the claim; namely, that the breakdown satisfies
    \begin{equation}
    \label{eqn::redescending-bp-upper-bound}
        \varepsilon^*_{W_2}(\pi^\rho_n, X^n) \geq \frac{1}{2}.
    \end{equation}
   Regarding the second claim, if we take \( \pi = 1 \), the posterior mean is translation equivariant:
    \[
        \frac{\int \theta \exp(-\sum_{i=1}^n \loss((X_i + c) - \theta) \, d \theta} {\int \exp(-\sum_{i=1}^n \loss((X_i + c) - \theta) \, d \theta}  
        =
        \frac{\int \theta \exp(-\sum_{i=1}^n \loss(X_i - \theta)) \, d \theta} {\int \exp(-\sum_{i=1}^n \loss(X_i - \theta)) \, d \theta}  + c.
    \]
    It follows that the breakdown point of the posterior mean is at most 1/2 \citep{donoho:huber1983}. Now, from the lower bound in \Cref{lemma::bounds-on-W2}, we have that the breakpoint of the posterior is upper bounded by the breakdown point of the mean; therefore, \( \varepsilon^*_{W_2}(\pi^\rho_n, X^n) \leq \frac{1}{2}\). Combining this with \eqref{eqn::redescending-bp-upper-bound} finishes the claim.
\end{proof}
\noindent \textbf{Proof of \Cref{prop::bp-of-posterior-mean}}

\begin{proof}
    Take \( m \leq n\) with \( m/n < \varepsilon^*_{W_2}(\pi^\rho_n, X^n).\) Let \( \corruptedsample\) be the corrupted sample, and denote with \( F_{(n,m)}\) the corresponding corrupted empirical distribution. Furthermore, let \( T_1\) be the mean of \(\mposterior(\cdot \mid F_n)\), and \( \widetilde T_1\) be the mean of \(\mposterior(\cdot \mid F_{(n,m)})\). From the lower bound in \Cref{lemma::bounds-on-W2}, we have
    \[
        \sup_{\corruptedsample} \bigl | T_1 - \widetilde T_1 \bigr | 
        \leq
        \sup_{F_{(n,m)}} W_2 \bigr (\mposterior(\cdot \mid F_{(n,m)}), \, \mposterior(\cdot \mid F_n) \bigl )
        \, <
        \infty.
    \]
    This shows \( \varepsilon^*(T_1, X^n) \geq \varepsilon^*_{W_2}(\pi^\rho_n, X^n),\) as required.
\end{proof}
\noindent \textbf{Proof of \Cref{prop::bp-posterior-quantiles}}

\begin{proof}
    First, note that for \( X \sim P\) and \( Y \sim Q\), we have
    \begin{equation}
    \label{eqn::bound-on-second-moment}
        \E[Y^2] \leq 2 W_2^2(P, Q) +2\E[X^2].
    \end{equation}
    Take \( m \leq n\) with \( m/n < \varepsilon^*_{W_2}(\pi^\rho_n, X^n)\); hence, we cannot break the M-posterior with \( m \) samples. Let \( \corruptedsample\) be the corrupted sample, and denote with \( F_{(n,m)}\) the corresponding corrupted empirical distribution. By setting \( P= \mposterior(\cdot \mid F_n)\) and \( Q= \mposterior(\cdot \mid F_{(n,m)})\) in \eqref{eqn::bound-on-second-moment}, we have that
    \begin{equation}
    \label{eqn::upper-bound-in-bp-of-posterior-quantiles}
        \sup_{F_{(n,m)}} \E_{Y \sim \mposterior(\cdot \mid F_{(n,m)})}  [ Y^2  ]
        \leq
        \sup_{F_{(n,m)}} W_2^2 \bigl ( \mposterior(\cdot \mid F_n) , \, \mposterior(\cdot \mid F_{(n,m)}) \bigr ) + \E_{X \sim \mposterior(\cdot \mid F_n)}  [ X^2 ] 
        <
        \infty.
    \end{equation}
    Denote by \( \widetilde T_\tau \) the (left) \( \tau \)-quantile of \( \mposterior(\cdot \mid F_{(n,m)})\). Then, from \Cref{lemma::bound-on-quantiles}, we have
    \[
        \bigl | \widetilde T_\tau \bigr | 
        \leq 
        \sqrt{  \E_{Y \sim \mposterior(\cdot \mid F_{(n,m)})}  [ Y^2 ] \max\left \{ \frac{\tau}{1-\tau}, \frac{1 - \tau}{\tau} \right \}} + \E_{Y \sim \mposterior(\cdot \mid F_{(n,m)})} \bigl [ |Y| \bigr ].
    \]
    Finally, from \eqref{eqn::upper-bound-in-bp-of-posterior-quantiles}, we have that
    \[
        \sup_{\corruptedsample} \bigl | \widetilde T_\tau \bigr | < \infty.
    \]
    In other words, we cannot break the \( \tau\)-quantile with corrupting \( m\) observations. This shows \( \varepsilon^*(T_\tau, X^n) \geq \varepsilon^*_{W_2}(\pi^\rho_n, X^n) \), as required.
\end{proof}
%
%
%
%

\section{Proofs of Supporting results}

\noindent \textbf{Proof of \Cref{lemma::upper-bound-on-PIF}}

\begin{proof}
    To that end, let \( \closs ( x,\theta) := \loss(x,\theta) - \loss(x,0)\) be the re-centered loss, and note that 
    \begin{equation*}
        \mposterior(\theta \mid G) =
        \frac{\exp \left ( -n \E_{G} [\loss (X,\theta)] \right) \pi(\theta)}{\int \exp \left ( -n \E_{G} [\loss (X,\theta')] \right) \pi(\theta') \, d\theta'} =
        \frac{\exp \left ( -n \E_{G} [\closs (X, \theta)] \right) \pi(\theta)}{\int \exp \left ( -n \E_{G} [\closs (X,\theta')] \right) \pi(\theta') \, d\theta'} = 
        \centeredmposterior(\theta \mid G).
    \end{equation*}
    For \( \epsilon > 0\), taking \( F_{n,\epsilon,x_0} = (1 - \epsilon) F_n + \epsilon \delta_{x_0} \), we let
    \begin{equation*}
        h(\epsilon; \theta) := \exp \left ( -n \E_{F_{n,\epsilon,x_0}} [\closs (X, \theta)] \right) \pi(\theta),
    \end{equation*}
    so that we can write the posterior as
    \begin{equation*}
        \mposterior(\theta \mid F_{n,\epsilon,x_0}) = \centeredmposterior(\theta \mid F_{n,\epsilon,x_0}) = \frac{h(\epsilon; \theta)}{\int h(\epsilon; \theta') \, d\theta'}.
    \end{equation*}
    We now find that
    \begin{equation*}
        h'(\epsilon; \theta) := \frac{\partial}{\partial \epsilon} h(\epsilon ; \theta) = n h(\epsilon; \theta) \left( \E_{F_n} [ \closs(X, \theta) ] - \closs(x_0, \theta) \right),
    \end{equation*}
    and
    \begin{equation*}
        \frac{\partial}{\partial \epsilon} \int h(\epsilon; \theta') \, d\theta' = \int \frac{\partial}{\partial \epsilon} h(\epsilon; \theta') \, d\theta' = \int h'(\epsilon; \theta') \, d\theta'.
    \end{equation*}
    Using these derivations, we have
    \begin{equation*}
    \begin{split}
        \frac{\partial}{\partial \epsilon} \mposterior(\theta \mid F_{n,\epsilon,x_0}) &= \frac{h' (\epsilon; \theta)}{\int h(\epsilon; \theta') \, d\theta'} - \frac{h(\epsilon; \theta) \int h'(\epsilon; \theta') \, d\theta'}{\left ( \int h(\epsilon; \theta') \, d\theta' \right ) ^ 2} \\
        &= \frac{h(\epsilon; \theta)}{\int h(\epsilon; \theta') \, d\theta'} \left ( \frac{h'(\epsilon; \theta)}{h(\epsilon; \theta)} - \frac{\int h'(\epsilon; \theta') \, d\theta'}{\int h(\epsilon; \theta') \, d\theta'} \right ) \\
        &= n \mposterior(\theta \mid F_{n,\epsilon,x_0}) \left ( \E_{F_n} [ \closs(X, \theta) ] - \closs(x_0, \theta) \right ) \\
        &\quad - n \mposterior(\theta \mid F_{n,\epsilon,x_0}) \left ( \int \mposterior(\theta' \mid F_{n,\epsilon,x_0}) ( \E_{F_n} [ \closs(X, \theta') ] - \closs(x_0, \theta') ) \, d\theta' \right ).
    \end{split}
    \end{equation*}
    By denoting
    \begin{equation}
    \label{eqn::defining-function-g}
        g(x_0, \theta) = \E_{F_n} [ \closs(X, \theta) ] - \closs(x_0, \theta),
    \end{equation}
    we have that the posterior influence function can be written as
    \begin{equation*}
        \textrm{PIF}(x_0; \,\theta, \rho, F_n) = n \mposterior(\theta \mid F_n) \left ( g(x_0, \theta) - \int \mposterior(\theta' \mid F_n) g(x_0, \theta') \, d\theta' \right ).
    \end{equation*}
    We can bound it as follows:
    \begin{equation}
    \label{eqn::bound-on-PIF} 
        |\textrm{PIF}(x_0; \,\theta, \rho, F_n)| \leq n \mposterior(\theta \mid F_n) \left ( | g(x_0, \theta) | + \int \mposterior(\theta' \mid F_n) |g(x_0, \theta')| \, d\theta' \right ).
    \end{equation}
    Furthermore, by noting that $\left | g(x_0, \theta) \right | \leq 2B |\theta |,$ as required we conclude that
    \begin{equation}
    \label{eqn::bound-on-PIF-2}
        |\textrm{PIF}(x_0; \,\theta, \rho, F_n)|
        \leq 2B n \mposterior(\theta \mid F_n) \left ( | \theta | + \int \mposterior(\theta' \mid F_n) |\theta'| \, d\theta' \right ).
   \end{equation}
\end{proof}
\noindent \textbf{Proof of \Cref{lemma::bounds-on-W2}}

\begin{proof}
    Write \( W_2^2(P,Q) = \inf_{\pi\in\Pi(P,Q)} \E_\pi  [(X-Y)^2 ] \), where \(\Pi(P,Q)\) is the set of couplings of \( (X,Y) \) with marginals \( P,Q \).
    
    \textbf{Lower bound:}
    For any coupling $\pi$, we have $\E_\pi [(X-Y)^2] \geq (\mathbb{E}_\pi[X-Y])^2 = (\mu_P-\mu_Q)^2,$
    by Jensen’s inequality.
    Taking the infimum over $\pi$ yields $W_2^2(P,Q)\ge (\mu_P-\mu_Q)^2$.
    
    \textbf{Upper bound:}
    Consider the independent coupling $\pi=P\otimes Q$. Then
    \[
    \E_{P\otimes Q} [(X-Y)^2] = \Var(X-Y) + (\mu_P-\mu_Q)^2 = \sigma_P^2+\sigma_Q^2 + (\mu_P-\mu_Q)^2,
    \]
    since $\Cov(X,Y)=0$ under independence. Hence
    \[
    W_2^2(P,Q) = \inf_{\pi\in\Pi(P,Q)} \mathbb{E}_\pi[(X-Y)^2] \leq \E_{P\otimes Q}[(X-Y)^2] = \sigma_P^2+\sigma_Q^2 + (\mu_P-\mu_Q)^2.
    \]
    Combining the two bounds gives the claim.
\end{proof}
\noindent \textbf{Proof of \Cref{lemma::posterior-moments-monotone}}

\begin{proof}
    Let \( T_k \) be the \( k\)-th moment of M-posterior \( \mposterior(\cdot \mid X^n)\):
    \begin{equation}
    \label{eq:Tk}
        T_k = \frac{\int \theta^k \pi(\theta) \exp \left (-\sum_{i=1}^n \rho(X_i - \theta) \right) \, d\theta}{\int \pi(\theta) \exp \left (-\sum_{i=1}^n \rho(X_i - \theta) \right) \, d\theta}.
    \end{equation}
    Denote the numerator and denominator of the above by $N$ and $D$ respectively, so that $T_k = N/D$.  By the dominated convergence theorem, we can differentiate under the integral sign in both the numerator and denominator with
    \[ \frac{\partial N}{\partial X_i} =  \int \theta^k \pi(\theta) \frac{\partial}{\partial X_i}\exp \left (-\sum_{i=1}^n \rho(X_i - \theta) \right) \, d\theta =  -\int \theta^k \pi(\theta) \exp \left (-\sum_{i=1}^n \rho(X_i - \theta) \right) \psi(X_i - \theta)\, d\theta,\]
    and
        \[ \frac{\partial D}{\partial X_i} = -\int \pi(\theta) \exp \left (-\sum_{i=1}^n \rho(X_i - \theta) \right) \psi(X_i - \theta)\, d\theta.\]
    Letting \(\exp(\ldots)\) denote \(\exp \left (-\sum_{i=1}^n \rho(X_i - \theta) \right)\), we find
    \[
    \begin{split}
        \frac{\partial T_k}{\partial X_i} 
        =
        \frac{\partial}{\partial X_i} \left ( \frac{N}{D}\right ) 
        &=
        \frac{1}{D^2} \left[D\frac{\partial N}{\partial X_i} - N\frac{\partial D}{\partial X_i} \right] \\
        &=
        \frac{- \int \theta^k \pi(\theta) \score (X_i - \theta) \exp(\ldots) \, d\theta + T_k \int \pi(\theta) \score(X_i - \theta) \exp(\ldots) \, d\theta }{D } \\
        &=
        \frac{\int (T_k - \theta^k)  \pi(\theta) \score(X_i - \theta) \exp(\ldots) \, d\theta }{D}.
    \end{split}
    \]
    Because
    \[
        \int (T_k - \theta^k) \pi(\theta) \exp(\ldots) \, d\theta = 0,
    \]
    we can write
    \[
        \frac{\partial T_k}{\partial X_i} 
        =
        \frac{\int (T_k - \theta^k) \left [ \score(X_i - \theta) - \score(X_i - T_k^{1/k}) \right ] \pi(\theta) \exp(\ldots) \, d\theta }{D}.
    \]
    Since \( \loss \) is convex, \( \score\) is monotone non-decreasing. Therefore, for odd \( k \), \( (T_k - \theta^k) [ \score(X_i - \theta) - \score(X_i - T_k^{1/k}) ] \geq 0. \) This in turn implies that the integrand in the above expression is positive; hence, \( {\partial T_k}/{\partial X_i} \geq 0 \), showing the first part of the claim.

    On the other hand, for even \( k \), \( (T_k - \theta^k)  [ \score(X_i - \theta) - \score(X_i - T_k^{1/k})  ] \) is positive for \( \theta \geq -T_k^{1/k}\) and negative for \( \theta < -T_k^{1/k}\). Hence, as a function of \( X_i\), \( T_k\) is decreasing to some point, and then increasing.
\end{proof}
\noindent \textbf{Proof of \Cref{lemma::huber-paper-redescending-loss}}

\begin{proof}
    See Lemma 4.3 in \cite{huber1984}.
\end{proof}
\noindent \textbf{Proof of \Cref{lemma::bound-on-quantiles}}

\begin{proof}
    For \( t > 0\), by Cantelli inequality,
    \[
        \P(X - \mu \geq t) \leq \frac{\sigma^2}{\sigma^2 + t^2}.
    \]
    If we pick \( t \) such that \( \sigma^2 /(\sigma^2 + t^2) = 1 - \tau\), then \( T_\tau \leq \mu +t.\) The condition solves to 
    \[
        t = \sigma \sqrt{\frac{\tau}{1-\tau}},
    \]
    and hence 
    \[
        T_\tau \leq \mu +\sigma \sqrt{\frac{\tau}{1-\tau}}.
    \]
    Similarly, by the same arguments applied to the left tail, we have that
    \[
        T_\tau \geq \mu - \sigma \sqrt{\frac{1-\tau}{\tau}}.
    \]
    Combining these two bounds yields the claim.
\end{proof}
%

\section{Supporting lemmas} \label{appendix::supporting-lemmas}

\begin{lemma}
\label{lemma::WMLAN_conditions}
    If the function $\theta \mapsto \loss(X_1, \theta)$ is differentiable at $\theta^*$ in $P_0$-probability with derivative $\psi_{\theta^*}(X_1)$ and:
    \begin{enumerate}[(i)]
        \item there is an open neighborhood $U$ of $\theta^*$ and a square-integrable function $m_{\theta^*}$ such that for all $\theta_1, \theta_2 \in U$:
        \[
        \left| \loss ( \cdot, \theta_1) - \loss( \cdot, \theta_2) \right |  \leq m_{\theta^*} \|\theta_1 - \theta_2\|_2, \quad (P_0 \text{-a.s.}).
        \]
        
        \item the map \( \theta \mapsto \E_{P_0} \rho(\cdot, \theta) \) admits a second-order Taylor expansion at \( \theta^* \), i.e.,
        \[
        \E_{P_0} [\loss( \cdot, \theta) - \loss( \cdot, \theta^*) ] = \frac{1}{2} (\theta - \theta^*)^\top V_{\theta^*} (\theta - \theta^*) + o(\|\theta - \theta^*\|^2), \quad (\theta \rightarrow \theta^*),
        \]
        where $V_{\theta^*}$ is a positive-definite $d \times d$-matrix,

        \item
        the parameter $\theta^*$ is the unique minimizer of $\theta \mapsto \E_{P_0}\rho(\cdot,\theta)$, and the weighted M-estimator $\mestimator$ exists and satisfies $\mestimator \overset{P_0}{\rightarrow} \theta^*$,
    \end{enumerate}
    then Assumption \ref{assump::WMLAN} (Weighted M-LAN) holds.
\end{lemma}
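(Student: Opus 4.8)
The plan is to carry the deterministic weights $\alpha_i$ through the classical stochastic-LAN expansion for M-estimators (as in van der Vaart's Theorem~5.23 and \cite{kleijn:vandervaart2012}). Writing $f_{n,h}(x):=\loss(x,\theta^*)-\loss(x,\theta^*+h/\sqrt n)$, I would first split the weighted sum into a centered weighted-empirical part and a mean part,
\[
\sum_{i=1}^n\alpha_i f_{n,h}(X_i)=\sum_{i=1}^n\alpha_i\big(f_{n,h}(X_i)-\E_{P_0}[f_{n,h}(X)]\big)+n\,\overline{\alpha}_n\,\E_{P_0}[f_{n,h}(X)].
\]
The mean part is handled directly by hypothesis~(ii): the second-order Taylor expansion gives $\E_{P_0}[f_{n,h}(X)]=-\tfrac{1}{2n}h^\top V_{\theta^*}h+o(1/n)$ uniformly for $h$ in a compact set $K$ (since $\|h/\sqrt n\|\to0$ uniformly there), so $n\,\overline{\alpha}_n\,\E_{P_0}[f_{n,h}]=-\tfrac12\overline{\alpha}_n h^\top V_{\theta^*}h+o(1)$, producing exactly the quadratic term of $R_{n,\boldsymbol{\alpha}}$.

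Next I would linearize the centered part. Differentiability in probability at $\theta^*$ with derivative $\score(\cdot,\theta^*)$ makes the first-order remainder $r_{n,h}(x):=f_{n,h}(x)+\score(x,\theta^*)^\top h/\sqrt n$ equal to $o_P(\|h/\sqrt n\|)$ pointwise, while the Lipschitz domination in hypothesis~(i), with square-integrable envelope $m_{\theta^*}$, controls its increments in $h$. The centered weighted sum of $r_{n,h}$ has variance $\big(\sum_i\alpha_i^2\big)\Var_{P_0}(r_{n,h})=n\cdot O(1)\cdot o(1/n)=o(1)$, where $\tfrac1n\sum_i\alpha_i^2=O(1)$ by the finite-second-moment assumption on the weights; a finite-net argument using the Lipschitz bound then upgrades this to uniform negligibility over $K$. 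Since $\E_{P_0}[\score(X,\theta^*)]=0$ (the population-risk gradient vanishes at the minimizer by (ii)--(iii), the interchange of expectation and derivative being justified by the envelope), this yields, uniformly over $h\in K$,
\[
\sum_{i=1}^n\alpha_i f_{n,h}(X_i)=-\sqrt n\,h^\top\,\E_{\wempirical}[\score(X,\theta^*)]-\tfrac12\overline{\alpha}_n h^\top V_{\theta^*}h+o_P(1).
\]

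It then remains to identify the linear coefficient with the weighted M-estimator. The estimating equation $\E_{\wempirical}[\score(X,\mestimator)]=0$, the consistency $\mestimator\overset{P_0}{\rightarrow}\theta^*$ from (iii), and the same local expansion give $\E_{\wempirical}[\score(X,\theta^*)]=-\overline{\alpha}_n V_{\theta^*}(\mestimator-\theta^*)+o_P(1/\sqrt n)$; equivalently, since $\mestimator$ maximizes $h\mapsto\sum_i\alpha_i f_{n,h}(X_i)$, an argmax argument identifies $\sqrt n(\mestimator-\theta^*)$ with the maximizer $-(\overline{\alpha}_n V_{\theta^*})^{-1}\sqrt n\,\E_{\wempirical}[\score(X,\theta^*)]$ of the limiting quadratic. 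Substituting this into the display above converts $-\sqrt n\,h^\top\E_{\wempirical}[\score(X,\theta^*)]$ into the cross term $h^\top\overline{\alpha}_n V_{\theta^*}\sqrt n(\mestimator-\theta^*)$, so that $\sup_{h\in K}|R_{n,\boldsymbol{\alpha}}(h)|\to0$ in $P_0$-probability.

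The main obstacle I anticipate is the uniform-over-$K$ negligibility of the centered weighted empirical remainder in the second step, i.e.\ a stochastic-equicontinuity statement; the weights require checking that $\tfrac1n\sum_i\alpha_i^2$ stays bounded and that the Lipschitz modulus, now scaled by $\alpha_i$, still furnishes a uniform bound. A secondary delicate point is the third step, where extracting a $\sqrt n$-rate characterization of $\mestimator$ from mere consistency needs a rate/argmax argument rather than a naive second-order Taylor expansion of $\score$ (which the framework deliberately avoids).
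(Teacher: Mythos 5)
Your proposal is correct and follows essentially the same route as the paper's proof: both adapt van der Vaart's Lemma 19.31 to the weighted empirical process (controlling the centered remainder via the square-integrable envelope from (i) together with the bounded second moment of the weights, and extracting the quadratic term from (ii)), and then identify the resulting centering sequence with $\sqrt{n}(\mestimator-\theta^*)$ via the Theorem 5.23-style linearization of the weighted estimating equation under the consistency assumption (iii). The only cosmetic difference is that the paper works with arbitrary bounded sequences $h_n$ in a compact set rather than your finite-net argument for uniformity, which is an equivalent device.
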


\begin{proof}
    We proceed using similar ideas to Lemma 19.31 in \cite{Vaart_1998}. Let \( (h_n) \subset \mathbb{R}^p \) be a bounded sequence and let \( \boldsymbol{\alpha} = (\alpha_n) \subset \R_{\geq 0} \) be an arbitrary sequence of weights with finite second moment, i.e.\
    $\limsup_{n \rightarrow \infty} \frac{1}{n}\sum_{i=1}^n \alpha_i^2 < \infty.$
    Define the weighted empirical process
    \[
    \textbf{G}_n^\alpha f := \sqrt{n} \left( \frac{1}{n} \sum_{i=1}^n \alpha_i f(X_i) - \frac{1}{n} \sum_{i=1}^n \alpha_i \E_{P_0} f \right)
    = \left( \frac{1}{n} \sum_{i=1}^n \alpha_i \right) \cdot \sqrt{n} \left( \frac{1}{\sum_{i=1}^n \alpha_i} \sum_{i=1}^n \alpha_i f(X_i) - \E_{P_0} f \right),
    \]
    and define
    \[
    \delta_n(x) := \sqrt{n} \left[ \rho(x, \theta^* + h_n/\sqrt{n}) - \rho(x, \theta^*) \right] - h_n^\top \psi_{\theta^*}(x).
    \]
    Now, note that random variables
    \[
    \textbf{G}_n^\alpha \bigl ( \delta_n \bigr) = \textbf{G}_n^\alpha \bigl ( \sqrt{n} \left[ \rho(\cdot, \theta^* + h_n/\sqrt{n}) - \rho(\cdot, \theta^*) \right] - h_n^\top \psi_{\theta^*} \bigr )
    \]
    have mean zero. Also, by condition \textit{(i)} and the fact that sequence $(h_n)$ is bounded, we can apply Dominated Convergence Theorem to get
    $
    \mathrm{Var} [ \delta_n (X) ] \rightarrow 0,
    $
    since loss function \( \loss \) is differentiable at \( \theta^* \) by assumption. Now,
    \[
    \mathrm{Var} \bigl[ \textbf{G}_n^\alpha( \delta_n ) \bigr ] =
    \left (\frac{\alpha_1 ^ 2 + \ldots + \alpha_n ^ 2}{n} \right ) \mathrm{Var} \bigl [ \delta_n (X) \bigr] \rightarrow 0
    \]
    since, by assumption, weight sequence $(\alpha_n)$ has a finite second moment. 
    Combining the facts that all of these variables have zero mean and variance converging to zero, we conclude
    \[
    \textbf{G}_n^\alpha( \delta_n ) = \textbf{G}_n^\alpha \bigl ( \sqrt{n} \left[ \rho(\cdot, \theta^* + h_n/\sqrt{n}) - \rho(\cdot, \theta^*) \right] - h_n^\top \psi_{\theta^*} \bigr ) \overset{P_0}{\rightarrow} 0.
    \]
    Expanding the above, we find
    \[
    \sum_{i=1}^n \alpha_i \bigl [ \rho(X_i, \theta^* + h_n/\sqrt{n}) - \rho(X_i, \theta^*) \bigr ] - \left( \sum_{i=1}^n \alpha_i \right) \E_{P_0} \left[ \rho(\cdot, \theta^* + h_n/\sqrt{n}) - \rho(\cdot, \theta^*) \right] - \textbf{G}_n^\alpha (h_n^\top \psi_{\theta^*})
    \]
    is equal to \(o_{P_0}(1)\).
    By the second-order Taylor expansion assumption \textit{(ii)}, we have:
    \[
    \E_{P_0} [\rho(\cdot, \theta^* + h_n/\sqrt{n}) - \rho(\cdot, \theta^*)] = \frac{1}{2n} h_n^\top V_{\theta^*} h_n + o_{P_0}(n^{-1}).
    \]
    Plugging this into the expansion above, we obtain:
    \[
    \sum_{i=1}^n \alpha_i [\rho(X_i, \theta^* + h_n/\sqrt{n}) - \rho(X_i, \theta^*)] - \textbf{G}_n^\alpha (h_n^\top \psi_{\theta^*}) - \frac{1}{2} \left( \frac{1}{n} \sum_{i=1}^n \alpha_i \right) h_n^\top V_{\theta^*} h_n = o_{P_0}(1).
    \]
    Hence, \textit{Weighted M-LAN} holds with centering sequence:
    \[
    \Delta_{n,\theta^*}^\alpha = n \left ( \sum_{i=1}^n \alpha_i \right )^{-1} V_{\theta^*}^{-1} \textbf{G}_n^\alpha (\psi_{\theta^*}) = V_{\theta^*}^{-1} \sqrt{n} \left( \frac{1}{\sum_{i=1}^n \alpha_i} \sum_{i=1}^n \alpha_i \psi_{\theta^*}(X_i) - \E_{P_0} \psi_{\theta^*} \right).
    \]
    Note that in Assumption ~\ref{assump::WMLAN}, we take a supremum in \( h \) over a compact set, while here we take a sequence \( h_n\) inside a compact set to prove the statement. These two are actually equivalent.  We finish the proof by showing that we can actually take slightly different centering sequence which is centered around the weighted M-estimator.
    Define the weighted empirical sum
    \[
    M_n^\alpha(\theta) = \frac{1}{n}\sum_{i=1}^n \alpha_i\,\rho(X_i,\theta),
    \qquad 
    \mestimator\in\arg\min_{\theta\in\Theta} M_n^\alpha(\theta).
    \]
    By (i)–(ii), the map $\theta\mapsto M_n^\alpha(\theta)$ is locally Lipschitz, differentiable at $\theta^*$ in $P_0$–probability with gradient
    \[
    \nabla M_n^\alpha(\theta^*) 
    = \frac{1}{n}\sum_{i=1}^n \alpha_i\Bigl\{\psi_{\theta^*}(X_i)-\E_{P_0}\psi_{\theta^*}(X)\Bigr\},
    \]
    and admits the quadratic expansion
    \[
    M_n^\alpha(\theta^*+h/\sqrt{n})
    = M_n^\alpha(\theta^*) 
    + \frac{1}{\sqrt{n}}\nabla M_n^\alpha(\theta^*)^\top h 
    + \frac{\bar\alpha_n}{2n}h^\top V_{\theta^*}h 
    + o_p(n^{-1}) ,
    \]
    uniformly for $\|h\|\le R$. The same arguments as in the proof of \cite[Thm.\ 5.23]{Vaart_1998}, using the assumption \( (iii) \), now give
    \begin{equation}
    \label{eq:weighted-M-linearization}
        \sqrt{n}(\mestimator - \theta^*) 
        = 
        \frac{1}{\bar\alpha_n} V_{\theta^*}^{-1} \frac{1}{\sqrt{n}}\sum_{i=1}^n \alpha_i\Bigl\{\psi_{\theta^*}(X_i)-\E_{P_0}\psi_{\theta^*}(X)\Bigr\}
        + o_p(1).
    \end{equation}
    Comparing \eqref{eq:weighted-M-linearization} with the Weighted M-LAN display derived above,
    \[
        \Delta_{n,\theta^*}^{\alpha}
        = 
        V_{\theta^*}^{-1}\sqrt{n} \left(
        \frac{1}{\sum_{i=1}^n\alpha_i}\sum_{i=1}^n \alpha_i \psi_{\theta^*}(X_i) - \E_{P_0}\psi_{\theta^*}(X)\right),
    \]
    and using $\sum_{i=1}^n\alpha_i=n\,\bar\alpha_n$, we obtain the identity
    \[
        \Delta_{n,\theta^*}^{\alpha}
        = 
        \sqrt{n}\,(\mestimator - \theta^*) + o_p(1).
    \]
    Therefore, as in Assumption~\ref{assump::WMLAN} we may \emph{take the centering sequence} to be the re-centered and re-scaled weighted M-estimator,
    \[
        \sqrt{n}\,(\mestimator - \theta^*),
    \]
    which completes the proof.
\end{proof}
\begin{lemma}  
    \label{lemma:ballconvergence}
    Assume there exists $\delta > 0$ such that the prior density $\pi$ is continuous and positive on $B_{\theta^*}(\delta)$ and that Assumption~\ref{assump::WMLAN} holds.
    For any $\eta, \epsilon > 0$, there exists a sequence $r_n \rightarrow + \infty$ and an integer $N(\eta, \epsilon) > 0$ such that for all $n > N(\eta, \epsilon)$, 
    \[
        \mathbb{P}_{P_0} \left ( \underset{g,h \in \overline{B}_{\textbf{0}}(r_n) }{\sup} f_n(g, h) > \eta \right) \leq \epsilon.
    \]
    where $f_n(g,h)$ is defined in \eqref{eq::BvM-def-of-fn} and $\overline{B}_{\textbf{0}}(r_n)$ denotes a closed ball of radius $r_n$ around $\mathbf{0}$.
\end{lemma}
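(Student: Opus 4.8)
The plan is to collapse $f_n$ to a transparent expression through an exact cancellation and then control the two surviving factors uniformly over a slowly growing ball. First I would rewrite the product of ratios sitting inside the positive part of \eqref{eq::BvM-def-of-fn}. Since $n\,\E_{\wempirical}[\loss(X,\theta)]=\sum_{i=1}^n\alpha_i\loss(X_i,\theta)$, the normalizing constant of the weighted M-posterior enters both $\pi^{\mathrm{WMLAN}}_{n}(g\mid\wempirical)$ and $\pi^{\mathrm{WMLAN}}_{n}(h\mid\wempirical)$ and cancels in their quotient; likewise the $n^{-1/2}$ factors cancel. Substituting the Weighted M-LAN representation of Assumption~\ref{assump::WMLAN},
\[
\sum_{i=1}^n\alpha_i\big(\loss(X_i,\theta^*)-\loss(X_i,\theta^*+h/\sqrt n)\big)=h^\top\overline{\alpha}_n V_{\theta^*}\,\Delta^{\boldsymbol{\alpha}}_{n,\theta^*}-\tfrac12 h^\top\overline{\alpha}_n V_{\theta^*}h+R_{n,\boldsymbol{\alpha}}(h),
\]
I observe that the linear and quadratic terms in $h$ are, by construction, precisely the $h$-dependent part of the exponent of $\phi(\cdot\mid\Delta^{\boldsymbol{\alpha}}_{n,\theta^*},V_{\theta^*}^{-1}/\overline{\alpha}_n)$. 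A short computation then shows that in the product $\tfrac{\phi_n(h)}{\pi^{\mathrm{WMLAN}}_{n}(h\mid\wempirical)}\tfrac{\pi^{\mathrm{WMLAN}}_{n}(g\mid\wempirical)}{\phi_n(g)}$ the Gaussian normalizers, the random centering $\Delta^{\boldsymbol{\alpha}}_{n,\theta^*}$, and every term linear or quadratic in $g,h$ cancel, leaving the identity
\[
f_n(g,h)=\Big\{1-\exp\!\big(R_{n,\boldsymbol{\alpha}}(g)-R_{n,\boldsymbol{\alpha}}(h)\big)\,\frac{\pi(\theta^*+g/\sqrt n)}{\pi(\theta^*+h/\sqrt n)}\Big\}^{+}.
\]

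Next I would note that $\sup_{g,h}f_n(g,h)>\eta$ is equivalent to $\inf_{g,h}\big[\exp(R_{n,\boldsymbol{\alpha}}(g)-R_{n,\boldsymbol{\alpha}}(h))\,\pi(\theta^*+g/\sqrt n)/\pi(\theta^*+h/\sqrt n)\big]<1-\eta$, so it suffices to bound the two factors below, uniformly over the ball. For the remainder factor I use the crude bound $|R_{n,\boldsymbol{\alpha}}(g)-R_{n,\boldsymbol{\alpha}}(h)|\le 2\,Y_n(r_n)$, where $Y_n(R):=\sup_{\|h\|\le R}|R_{n,\boldsymbol{\alpha}}(h)|$. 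For the prior factor, as long as $r_n=o(\sqrt n)$ the arguments $\theta^*+g/\sqrt n$ and $\theta^*+h/\sqrt n$ lie within $r_n/\sqrt n\to0$ of $\theta^*$ and hence eventually inside $B_{\theta^*}(\delta)$; continuity and strict positivity of $\pi$ there give $\sup_{\|g\|,\|h\|\le r_n}\big|\pi(\theta^*+g/\sqrt n)/\pi(\theta^*+h/\sqrt n)-1\big|\to0$ \emph{deterministically}.

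The main obstacle is that Assumption~\ref{assump::WMLAN} supplies $Y_n(R)\to0$ in $P_0$-probability only for each \emph{fixed} radius $R$, whereas the statement requires a radius $r_n\to\infty$. I would resolve this by the standard diagonal extraction, exploiting that $Y_n(R)$ is nondecreasing in $R$: for each integer $k$ choose $N_k$, increasing in $k$ with $N_k\ge k^4$, such that $\mathbb P_{P_0}(Y_n(k)>1/k)<1/k$ for all $n\ge N_k$, and set $r_n:=k$ on $N_k\le n<N_{k+1}$. Then $r_n\to\infty$; moreover $r_n=k\le N_k^{1/4}\le n^{1/4}=o(\sqrt n)$, and $\mathbb P_{P_0}(Y_n(r_n)>1/k)<1/k$ on each block gives $Y_n(r_n)\to0$ in $P_0$-probability. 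This single sequence $r_n$ works for every $\eta,\epsilon$, with only $N(\eta,\epsilon)$ depending on them.

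Finally I would assemble the bound. Given $\eta,\epsilon>0$, fix $\eta',\eta''>0$ so small that $e^{-2\eta'}(1-\eta'')\ge1-\eta$. By the deterministic prior estimate and by $Y_n(r_n)\to0$ in probability, there is $N(\eta,\epsilon)$ such that for all $n>N(\eta,\epsilon)$ the prior factor exceeds $1-\eta''$ uniformly on $\overline{B}_{\mathbf 0}(r_n)$ and $\mathbb P_{P_0}(Y_n(r_n)>\eta')\le\epsilon$. On the event $\{Y_n(r_n)\le\eta'\}$ one then has, for all $g,h\in\overline{B}_{\mathbf 0}(r_n)$,
\[
\exp\!\big(R_{n,\boldsymbol{\alpha}}(g)-R_{n,\boldsymbol{\alpha}}(h)\big)\,\frac{\pi(\theta^*+g/\sqrt n)}{\pi(\theta^*+h/\sqrt n)}\ge e^{-2\eta'}(1-\eta'')\ge1-\eta,
\]
whence $\sup_{g,h\in\overline{B}_{\mathbf 0}(r_n)}f_n(g,h)\le\eta$. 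Therefore $\mathbb P_{P_0}\big(\sup_{g,h\in\overline{B}_{\mathbf 0}(r_n)}f_n(g,h)>\eta\big)\le\mathbb P_{P_0}(Y_n(r_n)>\eta')\le\epsilon$ for all $n>N(\eta,\epsilon)$, as required.
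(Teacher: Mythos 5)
Your proof is correct and follows essentially the same route as the paper: both exploit the exact cancellation of the linear and quadratic terms between the Weighted M-LAN expansion of $\log s_n$ and the exponent of $\phi_n$, reducing $\sup_{g,h} f_n(g,h)$ to control of $\sup_{\|h\|\le r_n}|R_{n,\boldsymbol{\alpha}}(h)|$ together with the uniform convergence of the prior ratio to $1$. The only difference is that you spell out the diagonal extraction producing the sequence $r_n\to\infty$ (a step the paper delegates to Step 2 of Lemma 5 in the cited reference), and your closed-form identity $f_n(g,h)=\{1-\exp(R_{n,\boldsymbol{\alpha}}(g)-R_{n,\boldsymbol{\alpha}}(h))\,\pi(\theta^*+g/\sqrt n)/\pi(\theta^*+h/\sqrt n)\}^{+}$ packages the paper's computation of $\log b_n(g_n,h_n)$ more transparently.
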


\begin{proof}
We will follow the proof of Lemma 6 in Appendix B from \cite{avellamedinaetal2021}. 
We will first prove the claim for fixed $r > 0$.
First notice that for any $r > 0$, there exists an integer $N_0(r) > 0$ such that $\theta ^ * + \frac{h}{\sqrt{n}} \in B_{\theta ^ *} (\delta)$ whenever $h \in \overline{B}_{\textbf{0}}(r)$ and $n \geq N_0(r)$. 
Using the notation from \Cref{thm::BvM}, for any two sequences $\{ h_n \}, \{ g_n \} \in \overline{B}_{\textbf{0}}(r)$ and $n > N_0(r)$, we have
\begin{equation*}
\begin{split}
    \frac{\pi^{\mathrm{WMLAN}}_{n}(g_n \mid \wempirical)} {\pi^{\mathrm{WMLAN}}_{n}(h_n \mid \wempirical)} 
    &=
    \frac{\exp\left ( - \sum_{i = 1}^n \alpha_i \loss(X_i,  \theta ^ * + \frac{g_n}{\sqrt{n}}) \right ) \pi(\theta ^ * + \frac{g_n}{\sqrt{n}})}{\exp\left ( - \sum_{i = 1}^n \alpha_i \loss(X_i,  \theta ^ * + \frac{h_n}{\sqrt{n}}) \right ) \pi(\theta ^ * + \frac{h_n}{\sqrt{n}})} \\
    &=
    \frac{\exp\left ( - \sum_{i = 1}^n \alpha_i \loss(X_i,  \theta ^ * + \frac{g_n}{\sqrt{n}}) \right ) \pi_n(g_n)}{\exp\left ( - \sum_{i = 1}^n \alpha_i \loss(X_i,  \theta ^ * + \frac{h_n}{\sqrt{n}}) \right ) \pi_n(h_n)}
\end{split}
\end{equation*}
Defining
\[
    s_n(h_n) = \exp \left ( -\sum_{i=1}^n \alpha_i \bigl ( \loss (X_i, \theta^* + h_n/\sqrt{n}) - \loss (X_i, \theta^*) \bigr ) \right ),
\]
we have, following the definition \eqref{eq::BvM-def-of-fn}, 
\[
    f_n(g_n, h_n) \equiv \left \{ 1 - \frac{\phi_n(h_n)}{\pi^{\mathrm{WMLAN}}_{n}(h_n \mid \wempirical)} \frac{\pi^{\mathrm{WMLAN}}_{n}(g_n \mid \wempirical)}{\phi_n(g_n)}\right \} ^ +
    = \left \{ 1 - \frac{\phi_n(h_n)}{\phi_n(g_n)} \frac{s_n(g_n)}{s_n(h_n)}\frac{\pi_n(g_n)}{\pi_n(h_n)} \right \} ^ +.
\]
Now, for any sequence $h_n \in \overline{B}_{\textbf{0}}(r)$, Assumption~\ref{assump::WMLAN} implies
\[
\begin{split}
    \log (s_n (h_n)) &= -\sum_{i=1}^n \alpha_i \bigl ( \loss (X_i, \theta^* + h_n/\sqrt{n}) - \loss (X_i, \theta^*) \bigr ) \\
    &= 
    h_n^\top \left ( \frac{1}{n} \sum_{i = 1}^n \alpha_i \right ) V_{\theta^*} \Delta^{\boldsymbol{\alpha}}_{n, \theta^*} - \frac{1}{2} h_n^\top \left ( \frac{1}{n} \sum_{i = 1}^n \alpha_i \right ) V_{\theta^*} h_n + o_{P_0}(1),
\end{split}
\]
and we can see that
\[
    \log (\phi_n (h_n)) =
    - \frac{p}{2} \log (2 \pi) + \frac{1}{2} \log (\det ( \frac{V_{\theta*}}{n} \sum_{i = 1}^n \alpha_i )) - \frac{1}{2} (h_n - \Delta^{\boldsymbol{\alpha}}_{n, \theta^*}) ^ \top \left ( \frac{V_{\theta*}}{n} \sum_{i = 1}^n \alpha_i \right )  (h_n - \Delta^{\boldsymbol{\alpha}}_{n, \theta^*}),
\]
and therefore
\begin{equation*}
    \log \left ( \frac{s_n(h_n)}{\phi_n(h_n)} \right ) =
    \frac{p}{2} \log (2 \pi) - \frac{1}{2} \log (\det ( \frac{V_{\theta*}}{n} \sum_{i = 1}^n \alpha_i )) + \frac{1}{2} {(\Delta^{\boldsymbol{\alpha}}_{n, \theta^*})}^\top \left ( \frac{V_{\theta*}}{n} \sum_{i = 1}^n \alpha_i \right ) \Delta^{\boldsymbol{\alpha}}_{n, \theta^*} + o_{P_0}(1).
\end{equation*}
Furthermore, for a sequence $g_n \in \overline{B}_\mathbf{0}(r)$, let
\[
    b_n (g_n, h_n) \equiv \frac{\phi_n (h_n) s_n (g_n) \pi_n (g_n)}{\phi_n (g_n) s_n(h_n) \pi_n (h_n)}
\]
Now, by simple application of previous calculations and the fact that $\pi_n(g_n), \pi_n(h_n) \rightarrow \pi (\theta^*)$ as $n \rightarrow \infty$, we conclude that
\begin{equation*}
    \log (b_n (g_n, h_n)) = o_{f_{0, n}}(1).
\end{equation*}
Now, since $h_n$ and $g_n$ are arbitrary sequences in $B_{\mathbf{0}}(r)$, the above conclusion is equivalent to saying that for any fixed $r$, there exists $\Tilde{N}_0 (r, \epsilon, \eta)$ such that for all $n > \max \{ \Tilde{N}_0 (r, \epsilon, \eta), N_0(r) \}$ we have
\[
    \mathbb{P}_{P_0} \left ( \underset{g,h \in \overline{B}_{\textbf{0}}(r_n) }{\sup} | \log b_n(g_n, h_n) | > \eta  \right ) \leq \epsilon.
\]
Finally, for all $n > \max \{ \Tilde{N}_0 (r, \epsilon, \eta), N_0(r) \}$,
\begin{align*}
    \mathbb{P}_{P_0} \left ( \underset{g,h \in \overline{B}_{\textbf{0}}(r_n) }{\sup} f_n(g, h) > \eta \right) 
    & \leq \mathbb{P}_{P_0} \left ( \underset{g_n,h_n \in \overline{B}_{\textbf{0}}(r_n) }{\sup} f_n(g_n, h_n) > \eta \right) \\
    &\leq \mathbb{P}_{P_0} \left ( \underset{g,h \in \overline{B}_{\textbf{0}}(r_n) }{\sup} | \log b_n(g_n, h_n) | > \eta  \right ) 
    \leq \epsilon,
\end{align*}
where we used the fact that the mapping $x \mapsto |\log (1 - x)| - x$ is increasing for $x \in [0,1]$ and has value 0 at $x = 0$.

For a general sequence $r_n$, the result follows from exactly the same arguments as in Step 2 of Lemma 5 in \cite{avellamedinaetal2021}.
\end{proof}

\begin{lemma}[Loss function of the reweighted posterior: Gaussian case]
\label{lemma::calculation-reweighted-posterior-normal}
    Suppose \(X \mid \theta \sim N(\theta,1)\), and let the prior on the weights be \(\alpha \sim \Gamma(\kappa,\lambda)\) with shape parameter \(k\) and rate parameter \(\lambda>0\).
    Then the corresponding reweighted posterior coincides with an M-posterior with loss function
    \[
        \rho(X, \theta) 
        = \kappa\left[ \log \left(\lambda + \tfrac{1}{2}(X-\theta)^2 + \tfrac{1}{2}\log(2\pi)\right) - \log \lambda \right].
    \]
\end{lemma}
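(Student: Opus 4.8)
The plan is to evaluate the reweighting loss of \Cref{subsection::Bayesian-Data-Reweighting} in closed form; the entire lemma reduces to a single Gamma integral. Recall that for a likelihood $f(\cdot\mid\theta)$ and a weight prior $\pi_\alpha$, the reweighted posterior is the M-posterior with loss $\loss(x,\theta) = -\log\int_0^\infty \pi_\alpha(\alpha)\,f(x\mid\theta)^\alpha\,d\alpha$. The first step is to write the tempered likelihood as a pure exponential in $\alpha$: since $f(x\mid\theta) = (2\pi)^{-1/2}\exp(-\tfrac12(x-\theta)^2)$, we have $f(x\mid\theta)^\alpha = \exp(-\alpha\, g(x,\theta))$ with $g(x,\theta) := -\log f(x\mid\theta) = \tfrac12(x-\theta)^2 + \tfrac12\log(2\pi)$.

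Substituting the $\Gamma(\kappa,\lambda)$ density $\pi_\alpha(\alpha) = \tfrac{\lambda^\kappa}{\Gamma(\kappa)}\alpha^{\kappa-1}e^{-\lambda\alpha}$, the integral becomes the Laplace transform of the Gamma prior evaluated at $g(x,\theta)$:
\[
\int_0^\infty \pi_\alpha(\alpha)\,f(x\mid\theta)^\alpha\,d\alpha = \frac{\lambda^\kappa}{\Gamma(\kappa)}\int_0^\infty \alpha^{\kappa-1}\exp\bigl(-(\lambda+g(x,\theta))\alpha\bigr)\,d\alpha = \left(\frac{\lambda}{\lambda+g(x,\theta)}\right)^{\kappa},
\]
where the final equality uses the standard identity $\int_0^\infty \alpha^{\kappa-1}e^{-s\alpha}\,d\alpha = \Gamma(\kappa)/s^\kappa$ with $s = \lambda + g(x,\theta)$. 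Taking negative logarithms then yields $\loss(x,\theta) = \kappa\bigl[\log(\lambda + g(x,\theta)) - \log\lambda\bigr]$, and substituting the expression for $g$ produces exactly the claimed formula.

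There is no genuine obstacle here; the only points requiring a small amount of care are (i) confirming the convergence condition $s = \lambda + g(x,\theta) > 0$ so that the Gamma integral is finite, which is immediate because $g(x,\theta) \ge \tfrac12\log(2\pi) > 0$ and $\lambda > 0$, and (ii) checking that the $\lambda^\kappa/\Gamma(\kappa)$ normalizing factors cancel cleanly against the $\Gamma(\kappa)$ arising from the integral, which they do, leaving the scale-free ratio $\lambda/(\lambda+g)$ inside the power.
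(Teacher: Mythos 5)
Your proposal is correct and follows essentially the same route as the paper's proof: write $f(x\mid\theta)^\alpha = \exp(-\alpha g(x,\theta))$ with $g(x,\theta)=\tfrac12(x-\theta)^2+\tfrac12\log(2\pi)$, integrate against the $\Gamma(\kappa,\lambda)$ density via the identity $\int_0^\infty \alpha^{\kappa-1}e^{-s\alpha}\,d\alpha = \Gamma(\kappa)/s^\kappa$, and take the negative logarithm. Your explicit check that $s=\lambda+g(x,\theta)>0$ is a small but welcome addition the paper leaves implicit in this case.
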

\begin{proof}
    The Gaussian likelihood for one observation is
    \[
        L(\theta;X) 
        = (2\pi)^{-1/2} \exp \left(-\tfrac{1}{2}(X-\theta)^2\right).
    \]
    Raising this likelihood to a power \(\alpha > 0\) yields
    \[
        L(\theta;X)^\alpha 
        = (2\pi)^{-\alpha/2} \exp \left(-\tfrac{\alpha}{2}(X-\theta)^2\right).
    \]
    The reweighted likelihood is defined by taking the expectation with respect to \(\alpha \sim \Gamma(\kappa,\lambda)\), whose density is
    \[
        \pi(\alpha) = \frac{\lambda^\kappa}{\Gamma(\kappa)} \alpha^{\kappa-1} e^{-\lambda \alpha}, 
        \qquad \alpha>0.
    \]
    Thus
    \[
        M(\theta;X) := \mathbb{E}_\alpha[L(\theta;X)^\alpha]
        = \int_0^\infty (2\pi)^{-\alpha/2} \exp\left(-\tfrac{\alpha}{2}(X-\theta)^2\right) \frac{\lambda^\kappa}{\Gamma(\kappa)} \alpha^{\kappa-1} e^{-\lambda \alpha}\, d\alpha.
    \]
    Combining terms in the exponential, we obtain
    \[
        M(\theta;X)
        = \frac{\lambda^\kappa}{\Gamma(\kappa)} \int_0^\infty \alpha^{\kappa-1} \exp \Bigl(-\alpha\bigl[\lambda + \tfrac{1}{2}(X-\theta)^2 + \tfrac{1}{2}\log(2\pi)\bigr]\Bigr)\, d\alpha.
    \]
    Set
    \[
        \lambda'(\theta;X) := \lambda + \tfrac{1}{2}(X-\theta)^2 + \tfrac{1}{2}\log(2\pi).
    \]
    The integral is then
    \[
        \frac{\lambda^\kappa}{\Gamma(\kappa)} \int_0^\infty \alpha^{\kappa-1} e^{-\lambda'\alpha}\, d\alpha.
    \]
    This integral is recognized as the normalizing constant of a Gamma distribution:
    \[
        \int_0^\infty \alpha^{\kappa-1} e^{-\lambda'\alpha}\, d\alpha = \frac{\Gamma(\kappa)}{(\lambda')^\kappa}.
    \]
    Hence,
    \[
        M(\theta;X) = \left(\frac{\lambda}{\lambda'}\right)^\kappa.
    \]
    Finally, the effective loss is defined as minus the logarithm of this marginal:
    \[
        \rho(X, \theta) := -\log M(\theta;X) = \kappa\left[\log(\lambda') - \log \lambda\right].
    \]
    Explicitly,
    \[
        \rho(X, \theta) = \kappa\left[ \log \left(\lambda + \tfrac{1}{2}(X-\theta)^2 + \tfrac{1}{2}\log(2\pi)\right) - \log \lambda \right].
    \]
    This completes the proof.
\end{proof}
\begin{lemma}[Loss function of the reweighted posterior: Exponential case]
\label{lemma::calculation-reweighted-posterior-exponential}
    Suppose \(X \mid \theta \sim \mathrm{Exp}(\theta)\) with rate parameter \(\theta>0\), 
    and let the prior on the weights be \(\alpha \sim \Gamma(\kappa,\lambda)\) 
    with shape parameter \(\kappa\) and rate parameter \(\lambda>0\).
    Then the corresponding reweighted posterior coincides with an M-posterior with loss function
    \[
        \rho(X, \theta) 
        = \kappa\left[ \log \left(\lambda + \theta X - \log \theta \right) - \log \lambda \right],
    \]
    provided that \(\lambda + \theta X - \log\theta > 0\).
\end{lemma}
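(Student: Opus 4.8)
The plan is to mirror the computation carried out for the Gaussian model in \Cref{lemma::calculation-reweighted-posterior-normal}, since the only model-specific ingredient is the form of the tempered likelihood. First I would record the one-observation Exponential likelihood $L(\theta;X)=\theta\,e^{-\theta X}$ and raise it to a weight $\alpha>0$, observing that
\[
    L(\theta;X)^\alpha=\theta^\alpha e^{-\alpha\theta X}=\exp\!\big(-\alpha(\theta X-\log\theta)\big).
\]
The crucial point is that the $\theta^\alpha$ factor collapses into the exponent as $-\alpha(-\log\theta)$, producing the term $-\log\theta$ that distinguishes this case from the Gaussian one (where the analogous correction was the constant $\tfrac12\log(2\pi)$).

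Next I would form the reweighted (marginal) likelihood by integrating against the $\Gamma(\kappa,\lambda)$ density $\pi_\alpha(\alpha)=\tfrac{\lambda^\kappa}{\Gamma(\kappa)}\alpha^{\kappa-1}e^{-\lambda\alpha}$, which after combining the exponentials gives
\[
    M(\theta;X)=\frac{\lambda^\kappa}{\Gamma(\kappa)}\int_0^\infty \alpha^{\kappa-1}\exp\!\big(-\alpha[\lambda+\theta X-\log\theta]\big)\,d\alpha.
\]
Setting $\lambda'(\theta;X):=\lambda+\theta X-\log\theta$, the integral is exactly the normalizing constant of a $\Gamma(\kappa,\lambda')$ density, equal to $\Gamma(\kappa)/(\lambda')^{\kappa}$, so that $M(\theta;X)=(\lambda/\lambda')^{\kappa}$. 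Using the definition of the M-posterior loss as minus the log of the marginal likelihood, as in \eqref{def::alpha-posterior}, I would then conclude
\[
    \rho(X,\theta)=-\log M(\theta;X)=\kappa\big[\log\lambda'-\log\lambda\big]=\kappa\big[\log(\lambda+\theta X-\log\theta)-\log\lambda\big],
\]
which is the asserted expression.

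The main obstacle — really the only subtlety — is the integrability/positivity condition. The Gamma integral converges to $\Gamma(\kappa)/(\lambda')^\kappa$ only when the effective rate $\lambda'=\lambda+\theta X-\log\theta$ is strictly positive, which is precisely the proviso attached to the statement. When $\lambda'\le 0$ the integral diverges and the reweighted likelihood fails to be finite, so the loss is well defined exactly on the region $\{\lambda+\theta X-\log\theta>0\}$; I would flag this restriction explicitly rather than treat it as automatic, in contrast to the Gaussian case where $\lambda'=\lambda+\tfrac12(X-\theta)^2+\tfrac12\log(2\pi)$ is positive for all sufficiently large $\lambda$ and hence the proviso was implicit.
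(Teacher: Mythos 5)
Your proposal is correct and follows essentially the same route as the paper's proof: raise the Exponential likelihood to the power $\alpha$, absorb $\theta^\alpha = e^{\alpha\log\theta}$ into the exponent, integrate against the $\Gamma(\kappa,\lambda)$ density to recognize the Gamma normalizing constant $\Gamma(\kappa)/(\lambda')^\kappa$, and take minus the logarithm. Your explicit attention to the positivity proviso $\lambda+\theta X-\log\theta>0$ is a welcome extra precision that the paper states but does not dwell on.
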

\begin{proof}
    The exponential likelihood for one observation is $L(\theta;X) = \theta e^{-\theta X},$ for $X \geq 0$ and $\theta>0$.
    Raising this likelihood to a power \(\alpha > 0\) yields $
        L(\theta;X)^\alpha = \theta^\alpha \exp(-\alpha \theta X).$
    The reweighted likelihood is defined by taking the expectation with respect to 
    \(\alpha \sim \Gamma(k,\lambda)\), whose density is
    \[
        \pi(\alpha) = \frac{\lambda^\kappa}{\Gamma(\kappa)} \alpha^{\kappa-1} e^{-\lambda \alpha}, 
        \qquad \alpha>0.
    \]
    Hence,
    \[
        M(\theta;X) := \mathbb{E}_\alpha[L(\theta;X)^\alpha]
        = \int_0^\infty \theta^\alpha e^{-\alpha \theta X}\, 
          \frac{\lambda^\kappa}{\Gamma(\kappa)} \alpha^{\kappa-1} e^{-\lambda \alpha}\, d\alpha.
    \]
    Combining terms, note that \(\theta^\alpha = e^{\alpha \log \theta}\). 
    Therefore the exponential term can be written as
    $e^{-\alpha(\lambda + \theta X - \log \theta)}.$
    Hence
    \[
        M(\theta;X) 
        = \frac{\lambda^\kappa}{\Gamma(\kappa)} \int_0^\infty \alpha^{\kappa-1} 
          \exp\!\Bigl(-\alpha\bigl[\lambda + \theta X - \log \theta \bigr]\Bigr)\, d\alpha.
    \]
    Set
    $\lambda'(\theta;X) := \lambda + \theta X - \log \theta.$
    The integral is then
    \[
        \frac{\lambda^\kappa}{\Gamma(\kappa)} \int_0^\infty \alpha^{\kappa-1} e^{-\lambda' \alpha}\, d\alpha.
    \]
    This is recognized as the normalizing constant of a Gamma distribution:
    \[
        \int_0^\infty \alpha^{\kappa-1} e^{-\lambda' \alpha}\, d\alpha = \frac{\Gamma(\kappa)}{(\lambda')^\kappa}.
    \]
    Thus,
    \[
        M(\theta;X) = \left(\frac{\lambda}{\lambda'}\right)^\kappa.
    \]
    Finally, the effective loss is defined as minus the logarithm of this marginal:
    \[
        \rho(X, \theta) := -\log M(\theta;X) = \kappa\left[\log(\lambda') - \log \lambda\right].
    \]
    Explicitly,
    $\rho(X, \theta) 
        = \kappa\left[ \log \left(\lambda + \theta X - \log \theta \right) - \log \lambda \right].$
    This completes the proof.
\end{proof}
%
%
\section{Additional Results} \label{appendix::additional-results}

\subsection{Additional BvM results} \label{asec::additional-BvM-results}

In a standard application setting, the weights are sometimes defined as random quantities rather than positive constants. Therefore, we show that the BvM result also holds when the weights are drawn independently from a prior distribution with a finite second moment. We again find convergence in total variation in a product probability between the external probability and the prior probability on weights.
\begin{proposition}
\label{prop::BvMrandom}
    Suppose that the prior density $\pi$ is continuous and positive on a neighborhood around the true parameter $\theta^*$ and Assumptions \ref{assump::WMLAN} and \ref{assump::concentration} hold.
    Furthermore, let $\alpha_i$ be drawn i.i.d.\ from $\pi_\alpha$ which has a finite second moment. Then
    $$d_{\mathrm{TV}} \left( \mposterior(\cdot \mid \wempirical), \, \phi(\cdot \mid \mestimator, V_{\theta^*}^{-1} / (\overline{\alpha}_n n)) \right) \to 0,$$
    in $P_0 \times \pi_\alpha^{(n)}$-probability, where $d_{\mathrm{TV}}(\cdot, \cdot)$ denotes the total variation distance and $V_{\theta^*}$ is the positive definite matrix satisfying Assumption \ref{assump::WMLAN}.
\end{proposition}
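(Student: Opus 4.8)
The plan is to reduce the random-weight statement to the fixed-weight Bernstein--von Mises result of \Cref{thm::BvM} by conditioning on the realized weight sequence and then averaging over the weights. Write the draws as an infinite i.i.d.\ sequence $\boldsymbol{\alpha} = (\alpha_i)_{i=1}^{\infty} \sim \pi_\alpha^{(\infty)}$ on a space $\Omega_\alpha$, and for each $n$ let
\[
D_n(X^n, \boldsymbol{\alpha}) := d_{\mathrm{TV}}\!\left( \mposterior(\cdot \mid \wempirical),\, \phi(\cdot \mid \mestimator, V_{\theta^*}^{-1}/(\overline{\alpha}_n n)) \right),
\]
which is a measurable $[0,1]$-valued function of the data and of the first $n$ weights. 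Fix $\eta > 0$ and set $f_n(\boldsymbol{\alpha}) := P_0\big( D_n(X^n, \boldsymbol{\alpha}) > \eta \big)$, the conditional $P_0$-probability given the weights. Since $D_n$ depends on the weights only through $(\alpha_1,\dots,\alpha_n)$, Tonelli's theorem gives
\[
(P_0 \times \pi_\alpha^{(n)})\big( D_n > \eta \big) = \int f_n(\boldsymbol{\alpha}) \, d\pi_\alpha^{(\infty)}(\boldsymbol{\alpha}),
\]
so it suffices to show that the right-hand side tends to $0$ as $n \to \infty$.

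First I would certify that $\pi_\alpha^{(\infty)}$-almost every realized sequence is admissible for \Cref{thm::BvM}. Since $\pi_\alpha$ has a finite second moment, the strong law of large numbers yields $\overline{\alpha}_n \to \E_{\pi_\alpha}[\alpha]$ and $n^{-1}\sum_{i=1}^n \alpha_i^2 \to \E_{\pi_\alpha}[\alpha^2] < \infty$ almost surely; hence on an event $A \subseteq \Omega_\alpha$ with $\pi_\alpha^{(\infty)}(A) = 1$, the sequence $\boldsymbol{\alpha}$ consists of positive weights with finite second moment. On $A$ the hypotheses of \Cref{thm::BvM} are satisfied, because Assumptions~\ref{assump::WMLAN} and~\ref{assump::concentration} are imposed for every weight sequence in this class, so the theorem gives $D_n(X^n, \boldsymbol{\alpha}) \to 0$ in $P_0$-probability for each fixed $\boldsymbol{\alpha} \in A$. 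Unwinding the definition of convergence in probability, this is exactly the statement that $f_n(\boldsymbol{\alpha}) \to 0$ for every $\boldsymbol{\alpha} \in A$.

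It then remains to pass from this pointwise (in $\boldsymbol{\alpha}$) convergence to convergence of the integral. Because $0 \le f_n \le 1$ and the constant function $1$ is $\pi_\alpha^{(\infty)}$-integrable, the bounded convergence theorem applied on the full-measure set $A$ gives $\int f_n \, d\pi_\alpha^{(\infty)} \to 0$, whence $(P_0 \times \pi_\alpha^{(n)})(D_n > \eta) \to 0$. As $\eta > 0$ was arbitrary, this is precisely convergence of $D_n$ to $0$ in $P_0 \times \pi_\alpha^{(n)}$-probability, which is the claim. The one step requiring genuine care is the admissibility bookkeeping: \Cref{thm::BvM} is a per-sequence statement whose conclusion holds for every deterministic weighting with finite second moment, so the main obstacle is to confirm (via the law of large numbers) that the random draws fall into that class with probability one and that the Weighted M-LAN and concentration conditions are genuinely quantified sequence-by-sequence rather than for a single privileged weighting; once this is in place, the interchange of limit and expectation over the weights is routine.
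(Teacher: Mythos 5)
Your proof is correct, and it reaches the conclusion by a somewhat different route than the paper's. The paper also conditions on the weights, but it works with the conditional \emph{expectation} of the total-variation distance rather than the conditional exceedance probability: it introduces the truncation event $B_n = \{ n^{-1}\sum_{i=1}^n \alpha_i^2 < C + \delta\}$, bounds $\E_{P_0 \times \pi_\alpha^{(n)}}[d_{\mathrm{TV}}]$ by $\E_{\pi_\alpha^{(n)}}\bigl[\E_{P_0}[d_{\mathrm{TV}}\,\mathbf{1}\{B_n\} \mid \boldsymbol{\alpha}]\bigr] + \P_{\pi_\alpha^{(n)}}(B_n^c)$, controls the first term using the quantitative bound \eqref{eqn::last-line-of-BvM-theorem} from inside the proof of \Cref{thm::BvM} together with dominated convergence, kills the second term with the weak law of large numbers, and finishes with Markov's inequality. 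You instead certify via the strong law that almost every realized weight sequence is admissible for \Cref{thm::BvM}, use that theorem as a black box to get $f_n(\boldsymbol{\alpha}) \to 0$ pointwise on a full-measure set, and conclude by bounded convergence. Your version buys two things: it uses only the \emph{statement} of \Cref{thm::BvM} rather than an intermediate display from its proof, and it sidesteps any question of whether the convergence in that theorem is uniform over weight sequences with a commonly bounded empirical second moment --- a uniformity the paper's use of $B_n$ implicitly leans on. The admissibility bookkeeping you flag is handled correctly: Assumption~\ref{assump::WMLAN} is indeed quantified over all positive weight sequences with finite (Ces\`aro) second moment, so applying \Cref{thm::BvM} sequence-by-sequence on the full-measure set is legitimate, provided (as you note) that $\pi_\alpha$ is supported on the positive reals.
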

\begin{proof}
    Let $C = \mathbb{E}_{\pi_\alpha} \left [ \alpha_1^2 \right ]$, which is finite by assumption.
    Pick $\delta > 0$ arbitrary. Define 
    \[
    B_n = \left \{ \frac{1}{n} \sum_{i = 1}^n \alpha_i^2 < C + \delta \right \}.
    \]
    As before, let $\phi_n(h) \equiv n ^ {- \frac{1}{2}} \phi (h \mid \mestimator, V_{\theta^*}^{-1} / \overline{\alpha}_n)$
    and $\pi^{\mathrm{WMLAN}}_{n}(h \mid \wempirical) \equiv n ^ {- \frac{1}{2}} \mposterior (\theta ^ * + \frac{h}{\sqrt{n}} \mid \wempirical)$. We now have
    \begin{align*}
        \E_{P_0 \times \pi_\alpha^{(n)}} [ d_{\mathrm{TV}} & ( \pi^{\mathrm{WMLAN}}_{n}(\cdot \mid \wempirical),  \, \phi_n(\cdot)  )  ] =  \E_{\pi_\alpha^{(n)}} [ \E_{P_0}  [ d_{\mathrm{TV}}  ( \pi^{\mathrm{WMLAN}}_{n}(\cdot \mid \wempirical),  \phi_n(\cdot) ) ]] \\
        &= \E_{\pi_\alpha^{(n)}}  [ \E_{P_0}  [ d_{\mathrm{TV}}  ( \pi^{\mathrm{WMLAN}}_{n}(\cdot \mid \wempirical), \phi_n(\cdot) )  \Big |  (\alpha_i)_{i = 1}^{n} ]] \\
        &\leq \E_{\pi_\alpha^{(n)}} \hspace{-1pt} [ \E_{P_0}  [ d_{\mathrm{TV}} (\pi^{\mathrm{WMLAN}}_{n}(\cdot \mid \wempirical),  \phi_n(\cdot)  ) \mathbf{1} \{ B_n \} \big | (\alpha_i)_{i = 1}^{n} \hspace{-1pt}] + \mathbb{P}_{\pi_\alpha^{(n)}} \hspace{-1pt}( B_n^c ).
    \end{align*}
    By assumptions, \Cref{thm::BvM} (see \eqref{eqn::last-line-of-BvM-theorem}) and Dominated Convergence, the first term on RHS goes to zero as $n \rightarrow \infty$.
    Furthermore, by the Weak Law of Large Numbers, the second term goes to zero as well.
    Finally, the standard application of Markov's inequality yields the desired result.
\end{proof}

The proposition establishes that the weighted M-posterior concentrates around the random weighted M-estimator. It is important to emphasize, however, that this result does not apply to the framework in \Cref{subsection::Bayesian-Data-Reweighting}. Here, weights are drawn independently and never inferred. In this setting, the weights alone do not provide robustness, so any robustification of the posterior must come from the choice of a robust loss function. Recall that in \eqref{def::alpha-posterior} we defined the marginalized posteriors of the form
$
    \pi_\alpha(\theta \mid X^n) := \int \pi(\theta, \alpha^n \mid X^n) \, d\alpha^n.
$
We then showed that this type of posterior actually corresponds to the M-posterior for a certain loss that depends on the likelihood and the prior on the weights. Hence, the asymptotic properties of this object can be described by \Cref{thm::BvM}. To conclude this section, we state a different BvM result for the above posteriors, showing the concentration around a mixture of Gaussian random variables:

\begin{lemma}
\label{lemma::BvM-for-Reweighted-Posterior}
Assume conditions from \Cref{prop::BvMrandom}. Then, with denoting the weighted MLE with $\hat{\theta}_{n, \boldsymbol{\alpha}} = \operatorname{argmax}_{\theta \in \Theta} \sum_{k=1}^n {\alpha_k} \log f(X_k \mid \theta) $, we have
    $$d_{TV} ( \pi_\alpha(\theta \mid X^n), \, \mathbb{E}_{\alpha^n \sim \pi(\alpha^n \mid X^n)} \phi(\cdot \mid \hat{\theta}_{n, \boldsymbol{\alpha}}, V_{\theta^*}^{-1} / (\overline{\alpha}_n n) ) \to 0,$$
    in $P_0$-probability.
\end{lemma}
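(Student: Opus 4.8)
The plan is to exploit the fact that the reweighted posterior is, by construction, an $\alpha^n$-mixture of weighted M-posteriors, and then push the Bernstein--von Mises approximation of \Cref{thm::BvM} through the mixture. First I would write the chain-rule factorization $\pi(\theta,\alpha^n\mid X^n)=\pi(\theta\mid\alpha^n,X^n)\,\pi(\alpha^n\mid X^n)$ and integrate out the weights to obtain
\[
\pi_\alpha(\theta\mid X^n)=\int \pi(\theta\mid\alpha^n,X^n)\,\pi(\alpha^n\mid X^n)\,d\alpha^n,
\]
observing that the conditional $\pi(\theta\mid\alpha^n,X^n)\propto \pi(\theta)\prod_{i=1}^n f(X_i\mid\theta)^{\alpha_i}$ is exactly the weighted M-posterior $\pi^{\loss}_{n}(\cdot\mid\wempirical)$ for the loss $\loss=-\log f$ with weight vector $\alpha^n$, whose weighted M-estimator coincides with the weighted MLE $\hat{\theta}_{n,\boldsymbol\alpha}$. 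Thus both the reweighted posterior and the target in the statement are mixtures over the same mixing measure $\pi(\alpha^n\mid X^n)$.

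Next I would use the convexity of total variation, namely $d_{\mathrm{TV}}(\int P_\alpha\,d\mu,\int Q_\alpha\,d\mu)\le\int d_{\mathrm{TV}}(P_\alpha,Q_\alpha)\,d\mu$, which gives
\[
d_{\mathrm{TV}}\!\Bigl(\pi_\alpha(\cdot\mid X^n),\,\mathbb{E}_{\alpha^n\sim\pi(\alpha^n\mid X^n)}\phi(\cdot\mid\hat{\theta}_{n,\boldsymbol\alpha},V_{\theta^*}^{-1}/(\overline{\alpha}_n n))\Bigr)
\le
\mathbb{E}_{\alpha^n\sim\pi(\alpha^n\mid X^n)}\bigl[g_n(\alpha^n)\bigr],
\]
where $g_n(\alpha^n):=d_{\mathrm{TV}}(\pi(\cdot\mid\alpha^n,X^n),\phi(\cdot\mid\hat{\theta}_{n,\boldsymbol\alpha},V_{\theta^*}^{-1}/(\overline{\alpha}_n n)))\in[0,1]$. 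For each fixed weight sequence of finite empirical second moment, \Cref{thm::BvM} (equivalently the bound \eqref{eqn::last-line-of-BvM-theorem}) already yields $g_n(\alpha^n)\to0$ in $P_0$-probability, so it remains to show that this bounded, data-dependent expectation vanishes in $P_0$-probability.

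The main obstacle is precisely this last step: the weights are neither deterministic nor drawn from the prior as in \Cref{prop::BvMrandom}, but from the data-dependent posterior $\pi(\alpha^n\mid X^n)$, so $\alpha^n$ and $g_n(\alpha^n)$ are coupled through the same sample $X^n$ and a naive Fubini interchange is unavailable. To control the right-hand side I would introduce the bounded-second-moment event $B_n=\{\alpha^n:\tfrac1n\sum_{i=1}^n\alpha_i^2<C\}$ and split $\mathbb{E}_{\alpha^n}[g_n]\le \mathbb{E}_{\alpha^n}[g_n\mathbf{1}_{B_n}]+\pi(B_n^c\mid X^n)$. On $B_n$ the constants in the proof of \Cref{thm::BvM} enter only through $\tfrac1n\sum_i\alpha_i^2$, which should produce a bound on $g_n$ that is uniform over $\alpha^n\in B_n$ once the weighted M-LAN remainder control of \Cref{assump::WMLAN} is upgraded to hold uniformly over such weight sets; establishing this uniform-in-weights remainder bound, so as to decouple $g_n$ from the realization of the posterior weights, is the genuinely delicate part. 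The residual term is handled by showing $\mathbb{E}_{P_0}[\pi(B_n^c\mid X^n)]\to0$, i.e.\ that the weight posterior concentrates on configurations of bounded empirical second moment; this follows by writing $\pi(\alpha^n\mid X^n)$ as the prior $\pi_\alpha^{(n)}$ tilted by the marginal likelihood factor $\int\pi(\theta)\prod_i f(X_i\mid\theta)^{\alpha_i}\,d\theta$ and combining the finite second moment of $\pi_\alpha$ with the model's integrability, after which a Markov-inequality argument as in \Cref{prop::BvMrandom} completes the proof.
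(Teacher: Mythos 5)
Your decomposition is exactly the one the paper uses: write $\pi_\alpha(\theta\mid X^n)$ as the $\pi(\alpha^n\mid X^n)$-mixture of weighted M-posteriors, apply the convexity inequality $d_{\mathrm{TV}}(\int Q_\alpha\,d\mu,\int P_\alpha\,d\mu)\le\int d_{\mathrm{TV}}(Q_\alpha,P_\alpha)\,d\mu$, and reduce the claim to showing that the mixed component-wise total variation distances vanish. Up to that point the two arguments coincide.

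Where you diverge is in the final limit step, and your instinct there is sound. The paper disposes of the remaining expectation in one line, asserting that it converges to zero ``by \Cref{prop::BvMrandom} and the Dominated Convergence Theorem.'' But \Cref{prop::BvMrandom} is proved for weights drawn i.i.d.\ from the prior $\pi_\alpha^{(n)}$, independently of the data, and gives convergence in $P_0\times\pi_\alpha^{(n)}$-probability; here the mixing measure is the data-dependent posterior $\pi(\alpha^n\mid X^n)$, so the expectation being controlled is taken under a different, sample-coupled law. You correctly identify this change of measure as the genuinely delicate point, and your proposed remedy---truncating to the bounded-empirical-second-moment event $B_n$, arguing a remainder bound uniform over $\alpha^n\in B_n$ via the weighted M-LAN condition, and separately showing $\mathbb{E}_{P_0}[\pi(B_n^c\mid X^n)]\to 0$---is a reasonable route to making the step rigorous, though you leave the uniform-in-weights LAN control and the posterior concentration of the weights as sketches rather than proofs. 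In short: your proposal follows the paper's route but is more honest about the one step the paper elides; to fully close the argument you would need to either carry out the uniform remainder bound you describe or find a way to transfer the $\pi_\alpha^{(n)}$-statement of \Cref{prop::BvMrandom} to the tilted measure $\pi(\alpha^n\mid X^n)$ (for instance by bounding the likelihood tilt on $B_n$).
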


\begin{proof}
    By definition, the marginalized posterior is obtained by integrating over the random weights:
    \[
        \pi_\alpha(\theta \mid X^n) 
        = \int \mposterior(\theta \mid \wempirical)\, \pi(\alpha^n \mid X^n)\, d\alpha^n.
    \]
    Now, for any family of distributions $(Q_\alpha)_{\alpha}$ and $(P_\alpha)_{\alpha}$ and any probability measure $\mu$ on the index set,
    \[
        d_{\mathrm{TV}}\!\left(\int Q_\alpha \, d\mu(\alpha), \, \int P_\alpha \, d\mu(\alpha)\right) 
        \leq \int d_{\mathrm{TV}}(Q_\alpha, P_\alpha) \, d\mu(\alpha).
    \]
    This inequality follows directly from the definition of total variation distance and Jensen’s inequality.  

    Applying this inequality with $Q_\alpha = \mposterior(\cdot \mid \wempirical)$, $P_\alpha = \phi(\cdot \mid \hat{\theta}_{n,\boldsymbol{\alpha}}, V_{\theta^*}^{-1}/(\overline{\alpha}_n n))$, and $\mu = \pi(\alpha^n \mid X^n)$, we obtain
    \[
    \begin{split}
        d_{\mathrm{TV}}\!\bigl ( \pi_\alpha(\theta \mid X^n), \,
        \E_{\alpha^n \sim \pi(\alpha^n \mid X^n)} & \phi(\cdot \mid \hat{\theta}_{n, \boldsymbol{\alpha}}, V_{\theta^*}^{-1}/(\overline{\alpha}_n n)) \bigr) \\
        & \leq \E_{\alpha^n \sim \pi(\alpha^n \mid X^n)} 
        d_{\mathrm{TV}}\!\left(\mposterior(\cdot \mid \wempirical), \phi(\cdot \mid \hat{\theta}_{n,\boldsymbol{\alpha}}, V_{\theta^*}^{-1}/(\overline{\alpha}_n n))\right).
    \end{split}
    \]
    The expectation on the right  side converges to zero by Proposition~\ref{prop::BvMrandom} and the Dominated Convergence Theorem.  

    As claimed, we therefore conclude that in $P_0$-probability
    \[
        d_{\mathrm{TV}}\!\left(\pi_\alpha(\theta \mid X^n), \,
        \E_{\alpha^n \sim \pi(\alpha^n \mid X^n)} \phi(\cdot \mid \hat{\theta}_{n, \boldsymbol{\alpha}}, V_{\theta^*}^{-1}/(\overline{\alpha}_n n)) \right) \to 0.
    \]
\end{proof}

Note that we expect robustness to arise in the mixture of Gaussians setting because the weights are sampled from the posterior distribution conditional on the data, rather than being drawn independently. This dependence on the observed data differentiates the setup from the i.i.d.\ weighting scheme and is precisely what enables robustification of the posterior.

\subsection{Breakdown point in higher dimensions}
\label{app:BP_general_d}
Here we state the result in the multivariate setting, showing how different classes of priors on $\mathbb{R}^d$ affect the robustness of the M-posterior. 
We say that a prior density $\pi$ on $\mathbb{R}^d$ has \emph{exponential-like tails} if it is of the form
\(
    \pi(\theta) \propto \exp(-h(\|\theta\|)),
\)
where $h:[0,\infty)\to\mathbb{R}_+$ is convex, symmetric in $\|\theta\|$, and has a bounded derivative $h'$. 
We say that $\pi$ has \emph{lighter-than-exponential tails} if it is of the form
\(
    \pi(\theta) \propto \exp(-h(\|\theta\|)),
\)
with $h$ convex and symmetric in $\|\theta\|$, but with unbounded derivative $h'$.

\begin{theorem} \label{thm::convex-loss-higher-dim}
    Let $\widetilde \rho$ be symmetric and convex with a score function $\widetilde\psi = {\widetilde\rho}'$ that is bounded. Let \( \loss \) be the radial loss of the form \( \loss(x) := \widetilde \loss (\| x\|)\). Assume an uninformative prior \( \pi = 1\), then the breakdown point of the M-posterior induced by loss \(\rho\) is equal to \( 1/2\). If the prior has exponential-like tails, then the breakdown point is at least \( 1/2\), and decreases down to \( 1/2\) as sample size grows. Finally, if the prior has tails lighter than those of the exponential distribution, the breakdown point of the posterior does not exist.
\end{theorem}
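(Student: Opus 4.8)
The plan is to mirror the one–dimensional argument given in the proof of \Cref{thm::breakdown-point-convex-loss}, reducing the breakdown analysis to uniform control of the first two moments of the (possibly corrupted) M-posterior and then invoking the multivariate analogue of \Cref{lemma::bounds-on-W2}. Indeed, for measures $P,Q$ on $\R^d$ the same coupling arguments give $\|\mu_P-\mu_Q\|^2 \le W_2^2(P,Q) \le \|\mu_P-\mu_Q\|^2 + \operatorname{tr}\Sigma_P + \operatorname{tr}\Sigma_Q$, so that $\sup W_2=\infty$ precisely when the corrupted posterior mean diverges or its second moment blows up. As in the scalar case, I would first replace $\loss(X_i-\theta)$ by the recentered summand $\loss(X_i-\theta)-\loss(X_i)$, which leaves $\mposterior$ unchanged but makes every term uniformly controllable.

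The radial structure is then exploited through three elementary facts about $\widetilde\loss$. Since $\widetilde\loss$ is convex with bounded (hence monotone) derivative, $\widetilde\score(r)\uparrow c:=\sup_r \widetilde\score(r)$ as $r\to\infty$, so $\widetilde\loss(r)=cr+o(r)$; crucially, the recentered radial increment obeys the uniform bound $|\loss(x-\theta)-\loss(x)|=\bigl|\widetilde\loss(\|x-\theta\|)-\widetilde\loss(\|x\|)\bigr|\le c\,\bigl|\,\|x-\theta\|-\|x\|\,\bigr|\le c\|\theta\|$, valid for \emph{every} $x$ and hence for every corrupted value; and along a fixed unit direction $u$ one has the directional limit $\loss(x-\theta)-\loss(x)\to -c\langle\theta,u\rangle$ as $x\to\infty\cdot u$, which captures the effect of a contaminating point sent to infinity.

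With these in hand, the finiteness (lower–bound) direction becomes uniform in the contamination. Writing the exponent of the recentered posterior as $-\sum_i\bigl(\loss(X_i^{\mathrm{corr}}-\theta)-\loss(X_i^{\mathrm{corr}})\bigr)-h(\|\theta\|)$, the $m$ corrupted terms contribute at most $mc\|\theta\|$ in absolute value, while the $n-m$ clean terms are bounded below by $(n-m)c\|\theta\|(1+o(1))-C_1$ for a constant $C_1=C_1(X^n)$ independent of the corruption. For the uninformative prior ($h\equiv 0$) this yields an exponent $\le -(n-2m)c\|\theta\|(1+o(1))+C_1$, so $\int\|\theta\|^k e^{(\cdot)}\,d\theta<\infty$ uniformly whenever $m<n/2$, with the normalizer bounded below by $\int e^{-nc\|\theta\|}\,d\theta>0$; hence both moments stay finite and the posterior cannot be broken. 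For exponential–like tails $h(\|\theta\|)\sim c_h\|\theta\|$ with $c_h=\sup h'$, the threshold shifts to $m<n/2+c_h/(2c)$, giving $\varepsilon^*_{W_2}\ge 1/2$ and $\varepsilon^*_{W_2}\downarrow 1/2$ as $n\to\infty$; for lighter–than–exponential tails $h$ is superlinear, so $-h(\|\theta\|)+nc\|\theta\|\to-\infty$ and the moments remain finite for every $m\le n$, i.e.\ the breakdown point does not exist. The matching upper bounds follow by sending all corrupted points to infinity along a single direction $u$: the directional limit turns the corrupted contribution into $+mc\langle\theta,u\rangle$, so along $u$ the exponent grows like $(2m-n)c\|\theta\|$ (resp.\ $(2m-n)c\|\theta\|-c_h\|\theta\|$), the posterior escapes to infinity, $\|\widetilde T_1\|\to\infty$, and $W_2=\infty$ by the lower bound in \Cref{lemma::bounds-on-W2}.

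The main obstacle is the multivariate worst–case analysis. Unlike the scalar setting, where \Cref{lemma::posterior-moments-monotone} pins down the extremal contamination as placing every moved point at $\pm\infty$, there is no clean monotonicity of the posterior moments in $\R^d$. The proposal sidesteps this for the finiteness direction by using the uniform increment bound $|\loss(x-\theta)-\loss(x)|\le c\|\theta\|$, which controls all contaminations simultaneously; the delicate remaining points are (i) making the lower–order $o(\|\theta\|)$ corrections in $\widetilde\loss(r)=cr+o(r)$ uniform over directions so that the radial integrals genuinely converge, and (ii) treating the boundary $m=n/2$ and verifying, for the upper bound, that the clean part grows at exactly rate $c\|\theta\|$ along the chosen direction $u$ so that sub-linear terms do not obstruct the escape to infinity. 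Both (i) and (ii) are technical but, as in the scalar proof, do not affect the stated value $1/2$.
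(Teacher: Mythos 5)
Your proof is correct and follows the same skeleton as the paper's: recenter the loss, reduce breakdown to uniform control of the first two posterior moments via a multivariate version of \Cref{lemma::bounds-on-W2}, exploit the radial asymptotics $\loss(x-\theta)-\loss(x)=c\|\theta\|(1+o(1))$ for clean points and the directional limit $-c\langle u,\theta\rangle$ for contaminating points sent to infinity, and run the same three-way case analysis on the prior tails. Two localized choices differ, both defensible and arguably to your advantage. First, for the finiteness direction the paper asserts that, ``exactly as in the one-dimensional monotonicity argument'' (\Cref{lemma::posterior-moments-monotone}), the worst contamination places every moved point at norm $\infty$; this extremality claim is not obviously available in $\R^d$, whereas your uniform increment bound $|\loss(x-\theta)-\loss(x)|\le c\,\bigl|\|x-\theta\|-\|x\|\bigr|\le c\|\theta\|$ controls all contaminations simultaneously and dispenses with the extremal-configuration step entirely. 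Second, for the matching upper bound under the improper prior the paper invokes translation equivariance of the posterior mean together with the Donoho--Huber bound and the lower bound of \Cref{lemma::bounds-on-W2}, whereas you exhibit the escape directly by sending the $m$ corrupted points to infinity along one direction and reading off the exponent $(2m-n)c\|\theta\|$; this is self-contained but, as you correctly flag, needs a word at the boundary $m=n/2$, where the linear terms cancel and one must use $\widetilde\loss(r)-cr\le 0$ (immediate from $\widetilde\score\le c$) to see that the unnormalized density stays bounded below along the ray and hence is non-integrable. Neither difference affects the stated conclusions in any of the three prior regimes.
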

\begin{proof}
    The proof closely follows claims from the proof of \Cref{thm::breakdown-point-convex-loss}.
    Write $\theta = r_\theta\,v$ with $r_\theta=\|\theta\|$ and $v=\theta/\|\theta\|$ when $\theta\neq 0$.
    For any fixed $\theta$ and any unit vector $u$, writing \( c = \sup \widetilde \psi (x) < \infty\), and using the fact that $\|Ru-\theta\|=R-\langle u,\theta\rangle+o(1)$ as $R\to\infty$, we have
    \begin{equation} \label{eqn::multiple-d-limit-in-x}
        \lim_{R\to\infty}\big\{\rho(Ru-\theta)-\rho(Ru)\big\} = - c \langle u,\theta\rangle.
    \end{equation}
    Also, for any fixed sample point $x$ and $\theta=r_\theta v$ with $r_\theta\to\infty$,
    \begin{equation} \label{eqn::multiple-d-limit-in-theta}
        \rho(x-\theta)-\rho(x)
        =
        c r_\theta + o(r_\theta).
    \end{equation}
    Similar as before, let 
    \begin{equation*}
        \widetilde T_1 = \frac{\int_{\R^d} \| \theta \| \pi(\theta) \exp \left \{-\sum_{i=1}^{n-m} \bigl ( \loss(X_i - \theta) - \loss(X_i) \bigr ) - \sum_{i=n - m +1}^{n} \bigl ( \loss(X_i' - \theta) - \loss(X_i') \bigr ) \right \} \, d\theta}{\int_{\R^d} \pi(\theta) \exp \left \{-\sum_{i=1}^{n-m} \bigl ( \loss(X_i - \theta) - \loss(X_i) \bigr ) - \sum_{i=n - m +1}^{n} \bigl ( \loss(X_i' - \theta) - \loss(X_i') \bigr ) \right \} \, d\theta}.
    \end{equation*}
    be the expected norm under the contaminated M-posterior, where we contaminated \( m \) samples. Also, in the same fashion, let
        \begin{equation*}
        \widetilde T_2 = \frac{\int_{\R^d} \| \theta \|^2 \pi(\theta) \exp \left \{-\sum_{i=1}^{n-m} \bigl ( \loss(X_i - \theta) - \loss(X_i) \bigr ) - \sum_{i=n - m +1}^{n} \bigl ( \loss(X_i' - \theta) - \loss(X_i') \bigr ) \right \} \, d\theta}{\int_{\R^d} \pi(\theta) \exp \left \{-\sum_{i=1}^{n-m} \bigl ( \loss(X_i - \theta) - \loss(X_i) \bigr ) - \sum_{i=n - m +1}^{n} \bigl ( \loss(X_i' - \theta) - \loss(X_i') \bigr ) \right \} \, d\theta}.
    \end{equation*}
    denote the second moment.
    Exactly as in the one-dimensional monotonicity argument (\Cref{lemma::posterior-moments-monotone}), we see that the biggest bias to these two quantities is achieved by corrupting samples such that \( \| X_i' \| = \infty\).  
    
    \textbf{Improper prior.} For those corrupted points, let \( u_i' \) denote the direction vector (of unit norm) of those points.  Now, recall that we have $\theta = r_\theta\,v$ with $r_\theta=\|\theta\|$ and $v=\theta/\|\theta\|$. Therefore, from \eqref{eqn::multiple-d-limit-in-x} and \eqref{eqn::multiple-d-limit-in-theta}, for large \( r_\theta\), we have that the exponent in the first two moments from the above is equal to
    \[
        - \left ((n - m ) \cdot c + o(1) \right ) \cdot r_\theta + \big ( c \sum_{i=n - m +1}^{n} \langle v, u_i \rangle \big ) \cdot r_\theta.
    \]
    Now, we see that \( \widetilde T_1 \) and \( \widetilde T_2\) are finite as long as \( n -m > \sum_{i=n - m +1}^{n} \langle v, u_i' \rangle \). Now, since \( v \) and \( u_i'\) are unit vectors, we have that \( \left | \sum_{i=n - m +1}^{n} \langle v, u_i' \rangle \right | \leq m \) and hence if \( n - m >m\), \( \widetilde T_1 \) and \( \widetilde T_2\) are finite. By the multidimensional equivalent of \Cref{lemma::bounds-on-W2}, we conclude that \( \varepsilon^*_{W_2}(\pi^\rho_n, X^n) \geq \frac{1}{2}\).

    To get the lower bound on the breakdown point, notice that the posterior mean is translation equivariant (as shown in the proof of \Cref{thm::breakdown-point-redescending-loss}).
    It follows that the breakdown point of the posterior mean is at most 1/2 \citep{donoho:huber1983}. Now, from the multidimensional lower bound similar to \Cref{lemma::bounds-on-W2}, we have that the breakdown point of the posterior is upper bounded by the breakdown point of the mean; therefore, \( \varepsilon^*_{W_2}(\pi^\rho_n, X^n) \leq \frac{1}{2}\). Combining this with the upper bound, we conclude that the breakdown point of the M-posteriors is equal to \(1/2\).

    \textbf{Super-exponential and Sub-exponential prior.} Same reasoning as in the proof of \Cref{thm::breakdown-point-convex-loss} and using the adaptation to higher dimension from above.
\end{proof}

\subsection{Additional examples} \label{asec::additional-examples}
\begin{example}[Reweighted posterior: Gaussian model]
\label{example:gamma_weights}
    Consider the setup of \Cref{subsection::Bayesian-Data-Reweighting} and suppose that we have a model where \( X_i \mid \theta \overset{i.i.d.}{\sim} N(\theta, 1) \) and let the prior on \( \theta \) be \( \pi(\theta) = N(\mu_0,\sigma_0^2) \).
    Furthermore, let the prior on the weights be \( \pi_\alpha = \Gamma(\kappa, \lambda) \).
    A direct calculation (see \Cref{lemma::calculation-reweighted-posterior-normal}) shows that the corresponding loss for this M-posterior is
    \[
        \loss(x, \theta) = \kappa \left [ \log \left ( \lambda + \frac{1}{2}(\theta-x)^2 + \frac{1}{2}\log(2\pi) \right ) - \log \lambda \right ],
    \]
    with score function
    \[
        \psi(x, \theta) = \frac{\kappa(\theta - x)}{\lambda + \frac{1}{2} (\theta - x)^2 + \frac{1}{2}\log(2\pi)}.
    \]
    Furthermore, if the data is actually drawn from \( X_i \overset{i.i.d.}{\sim} N(\theta^*, 1) \), due to the symmetry of the score function \( \psi \), the loss \( \loss \) is Fisher consistent:
    \[
        \E_{X \sim N(\theta^*, 1)} [ \psi(X, \theta^*)  ] = 0.
    \]
    Hence, as in the previous example, the (reweighted-)M-posterior \( \mposterior(\cdot \mid F_n)\) will concentrate around the true model parameter.
\end{example}
\begin{example}[Huber-skip loss] \label{example::Huber-skip-loss}
    We take a look at an example where we have a bounded posterior influence function, but the posterior mean does not exist; hence, we cannot even define the influence function of the posterior mean.  Consider the Huber-skip loss
    \[ \loss(x,\theta) = \min ((x-\theta)^2, 1), \] 
    so that \( 0 \leq \rho(x,\theta) \leq 1. \) Furthermore, take the Cauchy prior over the parameter space: 
    \( \pi(\theta) = (1 + \theta^2)^{-1}. \)
    Hence, by examining the formula of the posterior influence function in \eqref{eqn::exact-PIF}, we immediately get
    \begin{equation*}
        \bigl|\mathrm{PIF}(x_0;\,\theta,\rho,F_n)\bigr| \leq 4n\sup_{\theta\in\Theta}\pi_n^\rho(\theta\mid F_n)<\infty,
    \end{equation*}
    because the loss is lower-bounded and the Cauchy prior is upper-bounded. In other words,  the M-posterior \( \mposterior (\cdot \mid F_n)\) is uniformly B-robust. Now, note that for a sufficiently large \( \theta \), we have that \( \loss(\theta, X_i) = 1 \) for all \( i \in [n]\). Hence \( \mposterior(\theta \mid X^n) \propto \pi(\theta)\) for large enough \( \theta\). Since we chose a Cauchy prior, this implies that the posterior mean does not exist. 
    
    This example is puzzling in the following sense: \Cref{thm::BvM} shows that this M-posterior \( \mposterior(\cdot \mid F_n)\) converges in total variation distance to a Gaussian centered at the M-estimator \(\hat{\theta}_{\loss}\), which is robust since the score function \(\psi\) is bounded. At the same time, the posterior mean of \( \mposterior(\cdot \mid F_n)\) does not exist. This once again shows the important interplay between the prior distribution \(\pi(\theta)\) and the robust loss \( \loss \).

\end{example}


\begin{thebibliography}{80}
\providecommand{\natexlab}[1]{#1}
\providecommand{\url}[1]{\texttt{#1}}
\expandafter\ifx\csname urlstyle\endcsname\relax
  \providecommand{\doi}[1]{doi: #1}\else
  \providecommand{\doi}{doi: \begingroup \urlstyle{rm}\Url}\fi

\bibitem[Alimadad and Salibian-Barrera(2011)]{alimadad:salibian-barrera2011}
Azadeh Alimadad and Matias Salibian-Barrera.
\newblock An outlier-robust fit for generalized additive models with applications to disease outbreak detection.
\newblock \emph{Journal of the American Statistical Association}, 106\penalty0 (494):\penalty0 719--731, 2011.

\bibitem[Altamirano et~al.(2023)Altamirano, Fran\c{c}ois-Xavier, and Knoblauch]{altamirano:briol:knoblauch2023}
Matias Altamirano, Briol Fran\c{c}ois-Xavier, and Jeremias Knoblauch.
\newblock Robust and scalable {B}ayesian online changepoint detection.
\newblock In \emph{International Conference on Machine Learning (ICML)}, 2023.

\bibitem[Andrade and O’Hagan(2006)]{andrade:ohagan2006}
J.A.A. Andrade and A.~O’Hagan.
\newblock Bayesian robustness modeling using regularly varying distributions.
\newblock \emph{Bayesian Analysis}, 1\penalty0 (1):\penalty0 169--188, 2006.

\bibitem[Andrews et~al.(2015)Andrews, Bickel, , Hampel, Huber, Rogers, and Tukey]{andrewsetal1972}
David~F Andrews, Peter~J. Bickel, , Frank~R Hampel, Peter~J. Huber, William~H. Rogers, and W.~Tukey, John.
\newblock \emph{Robust Estimates of Location: Survey and Advances}.
\newblock Princeton University Press, 2015.

\bibitem[Avella~Medina and Ronchetti(2015)]{avellamedina:ronchetti2015}
Marco Avella~Medina and Elvezio Ronchetti.
\newblock Robust statistics: A selective overview and new directions.
\newblock \emph{Wiley Interdisciplinary Reviews: Computational Statistics}, 7\penalty0 (6):\penalty0 372--393, 2015.

\bibitem[Avella-Medina and Ronchetti(2018)]{avellamedina:ronchetti2018}
Marco Avella-Medina and Elvezio Ronchetti.
\newblock Robust and consistent variable selection in high-dimensional generalized linear models.
\newblock \emph{Biometrika}, 105\penalty0 (1):\penalty0 31--44, 2018.

\bibitem[Avella~Medina et~al.(2022)Avella~Medina, Montiel~Olea, Rush, and Velez]{avellamedinaetal2021}
Marco Avella~Medina, Jose~Luis Montiel~Olea, Cynthia Rush, and Amilcar Velez.
\newblock On the robustness to misspecification of $\alpha$-posteriors and their variational approximations.
\newblock \emph{Journal of Machine Learning Research}, 2022.

\bibitem[Azzalini(1985)]{skew-normal}
Adelchi Azzalini.
\newblock A class of distributions which includes the normal ones.
\newblock \emph{Scandinavian Journal of Statistics}, 12\penalty0 (2):\penalty0 171--178, 1985.

\bibitem[Berger(1994)]{berger1994}
James~O Berger.
\newblock An overview of robust {B}ayesian analysis.
\newblock \emph{Test}, 3\penalty0 (1):\penalty0 5--124, 1994.

\bibitem[Best et~al.(2005)Best, Richardson, and Thomson]{best2005}
Nicky Best, Sylvia Richardson, and Andrew Thomson.
\newblock A comparison of bayesian spatial models for disease mapping.
\newblock \emph{Statistical methods in medical research}, 14\penalty0 (1):\penalty0 35--59, 2005.

\bibitem[Bianco and Yohai(1996)]{bianco:yohai1996}
Ana~M Bianco and Victor~J Yohai.
\newblock Robust estimation in the logistic regression model.
\newblock In \emph{Robust Statistics, Data Analysis, and Computer Intensive Methods: In Honor of Peter Huber’s 60th Birthday}, pages 17--34. Springer, 1996.

\bibitem[Bianco et~al.(2013)Bianco, Boente, and Rodrigues]{bianco:boente:rodriguez2013}
Ana~M Bianco, Graciela Boente, and Isabel~M Rodrigues.
\newblock Resistant estimators in {P}oisson and gamma models with missing responses and an application to outlier detection.
\newblock \emph{Journal of Multivariate Analysis}, 114:\penalty0 209--226, 2013.

\bibitem[Bishop and Nasrabadi(2006)]{bishop2006}
Christopher~M Bishop and Nasser~M Nasrabadi.
\newblock \emph{Pattern recognition and machine learning}, volume~4.
\newblock Springer, 2006.

\bibitem[Bissiri et~al.(2016)Bissiri, Holmes, and Walker]{bissiri:holmes:walker2016}
Pier~G. Bissiri, Chris~C. Holmes, and Stephen~G. Walker.
\newblock A general framework for updating belief distributions.
\newblock \emph{Journal of the Royal Statistical Society, Series B}, 78\penalty0 (5):\penalty0 1103--1130, 2016.

\bibitem[Blei et~al.(2003)Blei, Ng, and Jordan]{blei2003}
David~M Blei, Andrew~Y Ng, and Michael~I Jordan.
\newblock Latent dirichlet allocation.
\newblock \emph{Journal of machine Learning research}, 3\penalty0 (Jan):\penalty0 993--1022, 2003.

\bibitem[Calvet et~al.(2015)Calvet, Czellar, and Ronchetti]{calvet:czellar:ronchetti2015}
Laurent~E Calvet, Veronika Czellar, and Elvezio Ronchetti.
\newblock Robust filtering.
\newblock \emph{Journal of the American Statistical Association}, 110\penalty0 (512):\penalty0 1591--1606, 2015.

\bibitem[Cantoni and Ronchetti(2001)]{cantoni:ronchetti2001}
Eva Cantoni and Elvezio Ronchetti.
\newblock Robust inference for generalized linear models.
\newblock \emph{Journal of the American Statistical Association}, 96\penalty0 (455):\penalty0 1022--1030, 2001.

\bibitem[Chernozhukov and Hong(2003)]{chernozhukov:hong2003}
Victor Chernozhukov and Han Hong.
\newblock An {MCMC} approach to classical estimation.
\newblock \emph{Journal of econometrics}, 115\penalty0 (2):\penalty0 293--346, 2003.

\bibitem[Clarke(1983)]{clarke1983}
Brenton~R Clarke.
\newblock Uniqueness and {F}r{\'e}chet differentiability of functional solutions to maximum likelihood type equations.
\newblock \emph{Annals of Statistics}, pages 1196--1205, 1983.

\bibitem[Croux et~al.(2012)Croux, Gijbels, and Prosdocimi]{croux:gijbels:prosdocimi2012}
Christophe Croux, Ir{\`e}ne Gijbels, and Ilaria Prosdocimi.
\newblock Robust estimation of mean and dispersion functions in extended generalized additive models.
\newblock \emph{Biometrics}, 68\penalty0 (1):\penalty0 31--44, 2012.

\bibitem[Davies(1987)]{davies1987}
Laurie~P Davies.
\newblock Asymptotic behaviour of {S}-estimates of multivariate location parameters and dispersion matrices.
\newblock \emph{Annals of Statistics}, pages 1269--1292, 1987.

\bibitem[Donoho and Huber(1983)]{donoho:huber1983}
David~L Donoho and Peter~J Huber.
\newblock The notion of breakdown point.
\newblock \emph{A {F}estschrift for Erich L. Lehmann}, 157184, 1983.

\bibitem[Duran-Martin et~al.(2024)Duran-Martin, Altamirano, Shestopaloff, S\'{a}nchez-Betancourt, Knoblauch, Jones, Briol, and Murphy]{duran:altamirano:shestopaloff:sanchez:knoblauch:jones:briol:murphy2024}
Gerardo Duran-Martin, Matias Altamirano, Alex Shestopaloff, Leandro S\'{a}nchez-Betancourt, Jeremias Knoblauch, Matt Jones, Francois-Xavier Briol, and Kevin~Patrick Murphy.
\newblock Outlier-robust {K}alman filtering through generalised {B}ayes.
\newblock In \emph{International Conference on Machine Learning (ICML)}, volume 235, pages 12138--12171, 2024.

\bibitem[Field and Smith(1994)]{field:smith1994}
Chris Field and B~Smith.
\newblock Robust estimation: A weighted maximum likelihood approach.
\newblock \emph{International Statistical Review/Revue Internationale de Statistique}, pages 405--424, 1994.

\bibitem[Freedman(1963)]{freedman1963}
David~A Freedman.
\newblock On the asymptotic behavior of {B}ayes' estimates in the discrete case.
\newblock \emph{Annals of Mathematical Statistics}, 34\penalty0 (4):\penalty0 1386--1403, 1963.

\bibitem[Gelman and Hill(2007)]{gelman:hill2007}
Andrew Gelman and Jennifer Hill.
\newblock \emph{Data analysis using regression and multilevel/hierarchical models}.
\newblock Cambridge University Press, 2007.

\bibitem[Gelman et~al.(2013)Gelman, Carlin, Stern, Dunson, Vehtari, and Rubin]{gelmanetal2013}
Andrew Gelman, John~B Carlin, Hal~S Stern, David~B Dunson, Aki Vehtari, and Donald~B Rubin.
\newblock \emph{Bayesian Data Analysis}.
\newblock CRC Press, third edition, 2013.

\bibitem[Ghosh and Basu(2016)]{ghosh:basu2016}
Abhik Ghosh and Ayanendranath Basu.
\newblock {Robust Bayes estimation using the density power divergence}.
\newblock \emph{Annals of the Institute of Statistical Mathematics}, 68\penalty0 (2):\penalty0 413--437, 2016.

\bibitem[Ghosh et~al.(2022)Ghosh, Majumder, and Basu]{ghosh:majumder:basu2022}
Abhik Ghosh, Tuhin Majumder, and Ayanendranath Basu.
\newblock General robust {B}ayes pseudo-posteriors.
\newblock \emph{Statistica Sinica}, 32\penalty0 (2):\penalty0 787--823, 2022.

\bibitem[Grünwald(2012)]{grunwald2012}
Peter Grünwald.
\newblock The safe {Bayesian}: Learning the learning rate via the mixability gap.
\newblock In \emph{Algorithmic Learning Theory (ALT)}, 2012.

\bibitem[Grünwald and Ommen(2017)]{grunwald:ommen2017}
Peter Grünwald and Thijs~van Ommen.
\newblock Inconsistency of {B}ayesian inference for misspecified linear models, and a proposal for repairing it.
\newblock \emph{Bayesian Analysis}, 12\penalty0 (4):\penalty0 1069--1103, 2017.

\bibitem[Gustafson(1996)]{gustafson1996}
Paul Gustafson.
\newblock Local sensitivity of posterior expectations.
\newblock \emph{Annals of Statistics}, pages 174--195, 1996.

\bibitem[Gustafson(2000)]{gustafson2000}
Paul Gustafson.
\newblock Local robustness in {B}ayesian analysis.
\newblock In \emph{Robust Bayesian Analysis}, pages 71--88. Springer, 2000.

\bibitem[Hampel(1971)]{hampel1971}
Frank~R Hampel.
\newblock A general qualitative definition of robustness.
\newblock \emph{Annals of Mathematical Statistics}, 42\penalty0 (6):\penalty0 1887--1896, 1971.

\bibitem[Hampel(1974)]{hampel1974}
Frank~R Hampel.
\newblock The influence curve and its role in robust estimation.
\newblock \emph{Journal of the American Statistical Association}, 69\penalty0 (346):\penalty0 383--393, 1974.

\bibitem[Hampel et~al.(1981)Hampel, Rousseeuw, and Ronchetti]{hampel:rousseeuw:ronchetti1981}
Frank~R Hampel, Peter~J Rousseeuw, and Elvezio Ronchetti.
\newblock The change-of-variance curve and optimal redescending {M}-estimators.
\newblock \emph{Journal of the American Statistical Association}, 76\penalty0 (375):\penalty0 643--648, 1981.

\bibitem[Hampel et~al.(1986)Hampel, Ronchetti, Rousseeuw, and Stahel]{hampeletal1986}
Frank~R. Hampel, Elvezio~M. Ronchetti, Peter~J. Rousseeuw, and Werner~A. Stahel.
\newblock \emph{Robust statistics: the approach based on influence functions}, volume 196.
\newblock John Wiley \& Sons, 1986.

\bibitem[Hampel(1968)]{hampel1968}
Frank~Rudolf Hampel.
\newblock \emph{Contributions to the theory of robust estimation}.
\newblock PhD dissertation, University of California, Berkeley, 1968.

\bibitem[Heritier and Ronchetti(1994)]{heritier:ronchetti1994}
St{\'e}phane Heritier and Elvezio Ronchetti.
\newblock Robust bounded-influence tests in general parametric models.
\newblock \emph{Journal of the American Statistical Association}, 89\penalty0 (427):\penalty0 897--904, 1994.

\bibitem[Higgins et~al.(2017)Higgins, Matthey, Pal, Burgess, Glorot, Botvinick, Mohamed, and Lerchner]{higginsetal2017}
Irina Higgins, Loic Matthey, Arka Pal, Christopher Burgess, Xavier Glorot, Matthew Botvinick, Shakir Mohamed, and Alexander Lerchner.
\newblock $\beta$-{VAE}: Learning basic visual concepts with a constrained variational framework.
\newblock In \emph{International Conference on Learning Representations (ICLR)}, 2017.

\bibitem[Holmes and Walker(2017)]{holmes:walker2017}
Chris~C. Holmes and Stephen~G. Walker.
\newblock Assigning a value to a power likelihood in a general {Bayesian} model.
\newblock \emph{Biometrika}, 104\penalty0 (2):\penalty0 497--503, 2017.

\bibitem[Hooker and Vidyashankar(2014)]{hooker:vidyashankar2014}
Giles Hooker and Anand Vidyashankar.
\newblock {Bayesian model robustness via disparities}.
\newblock \emph{TEST}, 2014.

\bibitem[Huber(1964)]{huber1964}
Peter~J. Huber.
\newblock Robust estimation of a location parameter.
\newblock \emph{Annals of Mathematical Statistics}, 35\penalty0 (1):\penalty0 73 -- 101, 1964.

\bibitem[Huber(1981)]{huber1981}
Peter~J. Huber.
\newblock \emph{Robust Statistics}.
\newblock Wiley, New York, 1981.

\bibitem[Huber(1984)]{huber1984}
Peter~J Huber.
\newblock Finite sample breakdown of {M}-and {P}-estimators.
\newblock \emph{Annals of Statistics}, 12\penalty0 (1):\penalty0 119--126, 1984.

\bibitem[Huber and Ronchetti(2009)]{huber:ronchetti2009}
Peter~J. Huber and Elvezio~M. Ronchetti.
\newblock \emph{Robust statistics}.
\newblock Wiley Series in Probability and Statistics. John Wiley \& Sons, Inc., Hoboken, NJ, second edition, 2009.

\bibitem[Johns(1979)]{johns1979}
M~Vernon Johns.
\newblock Robust {P}itman-like estimators.
\newblock In \emph{Robustness in statistics}, pages 49--60. Elsevier, 1979.

\bibitem[Kleijn and van~der Vaart(2012)]{kleijn:vandervaart2012}
B.~Kleijn and A.~van~der Vaart.
\newblock The {Bernstein-von Mises} theorem under misspecification.
\newblock \emph{Electronic Journal of Statistics}, 2012.

\bibitem[Koenker(2005)]{koenker2005}
Roger Koenker.
\newblock \emph{Quantile regression}.
\newblock Cambridge University Press, 2005.

\bibitem[Koenker and Bassett~Jr(1978)]{koenker:bassett1978}
Roger Koenker and Gilbert Bassett~Jr.
\newblock Regression quantiles.
\newblock \emph{Econometrica}, pages 33--50, 1978.

\bibitem[Kucukelbir et~al.(2017)Kucukelbir, Tran, Ranganath, Gelman, and Blei]{ADVI}
Alp Kucukelbir, Dustin Tran, Rajesh Ranganath, Andrew Gelman, and David~M. Blei.
\newblock Automatic differentiation variational inference.
\newblock \emph{Journal of Machine Learning Research}, 18\penalty0 (14):\penalty0 1--45, 2017.

\bibitem[Larget and Simon(1999)]{larget1999}
Bret Larget and Donald~L Simon.
\newblock Markov chain monte carlo algorithms for the bayesian analysis of phylogenetic trees.
\newblock \emph{Molecular biology and evolution}, 16\penalty0 (6):\penalty0 750--759, 1999.

\bibitem[Le~Cam(1953)]{lecam1953}
Lucien Le~Cam.
\newblock On some asymptotic properties of maximum likelihood estimates and related {B}ayes' estimates.
\newblock \emph{Univ. Calif. Publ. in Statist.}, 1:\penalty0 277--330, 1953.

\bibitem[Li and He(2024)]{li:he2024}
Yuanzhi Li and Xuming He.
\newblock Pseudo-{B}ayesian approach for quantile regression inference: adaptation and inference.
\newblock \emph{Statistica Sinica}, 34\penalty0 (2), 2024.

\bibitem[Lopuhaa and Rousseeuw(1991)]{lopuhaa:rousseeuw1991}
Hendrik~P Lopuhaa and Peter~J Rousseeuw.
\newblock Breakdown points of affine equivariant estimators of multivariate location and covariance matrices.
\newblock \emph{Annals of Statistics}, pages 229--248, 1991.

\bibitem[Markatou(2000)]{markatou2000}
Marianthi Markatou.
\newblock Mixture models, robustness, and the weighted likelihood methodology.
\newblock \emph{Biometrics}, 56\penalty0 (2):\penalty0 483--486, 2000.

\bibitem[Markatou et~al.(1997)Markatou, Basu, and Lindsay]{markatou:basu:lindsay1997}
Marianthi Markatou, Ayanedranath Basu, and Bruce Lindsay.
\newblock Weighted likelihood estimating equations: The discrete case with applications to logistic regression.
\newblock \emph{Journal of Statistical Planning and Inference}, 57\penalty0 (2):\penalty0 215--232, 1997.

\bibitem[Markatou et~al.(1998)Markatou, Basu, and Lindsay]{markatou:basu:lendsay1998}
Marianthi Markatou, Ayanendranath Basu, and Bruce~G Lindsay.
\newblock Weighted likelihood equations with bootstrap root search.
\newblock \emph{Journal of the American Statistical Association}, 93\penalty0 (442):\penalty0 740--750, 1998.

\bibitem[Maronna(1976)]{maronna1976}
Ricardo~A. Maronna.
\newblock Robust {M}-estimators of multivariate location and scatter.
\newblock \emph{Annals of Statistics}, pages 51--67, 1976.

\bibitem[Maronna et~al.(2019)Maronna, Martin, Yohai, and Salibi{\'a}n-Barrera]{maronnaetal2019}
Ricardo~A Maronna, R~Douglas Martin, Victor~J Yohai, and Mat{\'\i}as Salibi{\'a}n-Barrera.
\newblock \emph{Robust statistics: theory and methods (with R)}.
\newblock John Wiley \& Sons, 2019.

\bibitem[Matsubara et~al.(2022)Matsubara, Knoblauch, Briol, and Oates]{matsubara:knoblauch:briol:oates2022}
Takuo Matsubara, Jeremias Knoblauch, François‐Xavier Briol, and Chris~J. Oates.
\newblock {Robust generalised Bayesian inference for intractable likelihoods}.
\newblock \emph{Journal of the Royal Statistical Society, Series B}, 84\penalty0 (3):\penalty0 997--1022, 2022.

\bibitem[McLatchie et~al.(2025)McLatchie, Fong, Frazier, and Knoblauch]{mclatchie:fong:frazier:knoblauch2025}
Yann McLatchie, Edwin Fong, Ddavid~T. Frazier, and Jeremias Knoblauch.
\newblock Predictive performance of power posteriors.
\newblock \emph{Biometrika}, 2025.

\bibitem[Miller(2021)]{miller2021}
Jeffrey~W. Miller.
\newblock Asymptotic normality, concentration, and coverage of generalized posteriors.
\newblock \emph{Journal of Machine Learning Research}, 2021.

\bibitem[Miller and Dunson(2019)]{miller:dunson2019}
Jeffrey~W. Miller and David~B. Dunson.
\newblock Robust {B}ayesian inference via coarsening.
\newblock \emph{Journal of the American Statistical Association}, 2019.

\bibitem[Minsker et~al.(2017)Minsker, Srivastava, Lin, and Dunson]{minsker:srivastava:lin:dunson2017}
Stanislav Minsker, Sanvesh Srivastava, Lizhen Lin, and David~B Dunson.
\newblock Robust and scalable {B}ayes via a median of subset posterior measures.
\newblock \emph{Journal of Machine Learning Research}, 18\penalty0 (124):\penalty0 1--40, 2017.

\bibitem[{Mosteller} and {Tukey}(1977)]{tukeybook}
Frederick {Mosteller} and John~W. {Tukey}.
\newblock \emph{{Data analysis and regression. A second course in statistics}}.
\newblock Addison-Wesley Publishing Company, 1977.

\bibitem[Murphy(2012)]{murphy::mlbook}
Kevin~P. Murphy.
\newblock \emph{Machine Learning: A Probabilistic Perspective}.
\newblock The MIT Press, 2012.

\bibitem[Nakagawa and Hashimoto(2020)]{nakagawa:hashimoto2020}
Tomoyuki Nakagawa and Shintaro Hashimoto.
\newblock Robust {B}ayesian inference via {$\gamma$}-divergence.
\newblock \emph{Communications in Statistics - Theory and Methods}, 49\penalty0 (2):\penalty0 343--360, 2020.

\bibitem[Ray et~al.(2023)Ray, Medina, and Rush]{ruchira}
Ruchira Ray, Marco~Avella Medina, and Cynthia Rush.
\newblock Asymptotics for power posterior mean estimation.
\newblock In \emph{Allerton Conference on Communication, Control, and Computing (Allerton)}, 2023.

\bibitem[Rosenblatt et~al.(2025)Rosenblatt, Lut, Turok, Avella-Medina, and Cummings]{rosenblattetal2025}
Lucas Rosenblatt, Yuliia Lut, Eitan Turok, Marco Avella-Medina, and Rachel Cummings.
\newblock Differential privacy under class imbalance: Methods and empirical insights.
\newblock In \emph{International Conference on Machine Learning (ICML)}, 2025.

\bibitem[Rousseeuw and Yohai(1984)]{rousseeuw:yohai1984}
Peter Rousseeuw and Victor Yohai.
\newblock Robust regression by means of {S}-estimators.
\newblock In \emph{Robust and Nonlinear Time Series Analysis: Proceedings of a Workshop Organized by the Sonderforschungsbereich 123 “Stochastische Mathematische Modelle”, Heidelberg 1983}, pages 256--272. Springer, 1984.

\bibitem[Rousseeuw(1984)]{rousseeuw1984}
Peter~J Rousseeuw.
\newblock Least median of squares regression.
\newblock \emph{Journal of the American Statistical Association}, 79\penalty0 (388):\penalty0 871--880, 1984.

\bibitem[van~der Vaart(1998)]{Vaart_1998}
Aad~W. van~der Vaart.
\newblock \emph{Asymptotic Statistics}.
\newblock Cambridge Series in Statistical and Probabilistic Mathematics. Cambridge University Press, 1998.

\bibitem[von Mises(1947)]{vonmises1947}
Richard von Mises.
\newblock On the asymptotic distribution of differentiable statistical functions.
\newblock \emph{Annals of Mathematical Statistics}, 18\penalty0 (3):\penalty0 309--348, 1947.

\bibitem[Wang and Blei(2019)]{wang:blei2019-JASA}
Yixin Wang and David~M. Blei.
\newblock Frequentist consistency of variational {B}ayes.
\newblock \emph{Journal of the American Statistical Association}, 114\penalty0 (527):\penalty0 1147--1161, 2019.

\bibitem[Wang et~al.(2017)Wang, Kucukelbir, and Blei]{wang:kucukelbir:blei2017}
Yixin Wang, Alp Kucukelbir, and David~M. Blei.
\newblock Robust probabilistic modeling with {B}ayesian data reweighting.
\newblock In \emph{International Conference on Machine Learning (ICML)}, 2017.

\bibitem[Yang et~al.(2016)Yang, Wang, and He]{yang:wang:he2016}
Yunwen Yang, Huixia~Judy Wang, and Xuming He.
\newblock Posterior inference in {B}ayesian quantile regression with asymmetric laplace likelihood.
\newblock \emph{International Statistical Review}, 84\penalty0 (3):\penalty0 327--344, 2016.

\bibitem[Yohai(1987)]{yohai1987}
Victor~J Yohai.
\newblock High breakdown-point and high efficiency robust estimates for regression.
\newblock \emph{Annals of Statistics}, pages 642--656, 1987.

\bibitem[Yu and Moyeed(2001)]{yu:moyeed2001}
Keming Yu and Rana~A Moyeed.
\newblock Bayesian quantile regression.
\newblock \emph{Statistics \& Probability Letters}, 54\penalty0 (4):\penalty0 437--447, 2001.

\bibitem[Zhang(1999)]{zhang1999}
Tong Zhang.
\newblock Theoretical analysis of a class of randomized regularization methods.
\newblock In \emph{Conference on Computational Learning Theory (COLT)}, page 156–163, 1999.

\end{thebibliography}

\end{document}